\documentclass{amsart}
\usepackage{amsmath,amsthm}     
\usepackage{amssymb,bbm}            
\usepackage{euscript}           
\usepackage{graphicx}          
\usepackage{array,enumerate,calc,graphicx,xcolor,tikz,tikz-cd, enumitem,mathtools}
\usetikzlibrary{matrix,arrows,decorations.pathmorphing}
\usepackage{cancel}
\usepackage[colorlinks]{hyperref}
\hypersetup{citecolor=blue}
\usepackage{multirow}

\newtheorem{thm}[subsection]{Theorem}
\newtheorem{prop}[subsection]{Proposition}

\newtheorem{cor}[subsection]{Corollary}
\newtheorem{lemma}[subsection]{Lemma}
\newtheorem*{introthm}{Theorem}
\newtheorem*{introcor}{Corollary}

\theoremstyle{definition}  
\newtheorem{defn}[subsection]{Definition}
\newtheorem{example}[subsection]{Example}

\newtheorem{remark}[subsection]{Remark}
\newtheorem{notation}[subsection]{Notation}

\numberwithin{equation}{section}

\newcommand{\N}{\mathbb{N}} 
\DeclareMathOperator{\Res}{Res}
\newcommand{\rst}[3]{\Res_{#2}^{#1}{#3}}
\newcommand{\op}{\operatorname{op}}
\DeclareMathOperator{\Tr}{Tr} 
\DeclareMathOperator{\Cat}{Cat} 
\DeclareMathOperator{\id}{id} 
\DeclareMathOperator{\wfs}{WFS} 

\newlength\squareheight
\newcommand{\boxslash}{\tikz{\draw (0,0) rectangle (\squareheight,\squareheight); \draw[-] (0,0) -- (\squareheight,\squareheight)}}

\newcommand{\T}{\mathcal{T}}
\newcommand{\A}{\mathcal{A}}
\newcommand{\B}{B}
\renewcommand{\P}{\mathcal{P}}

\newcommand{\Rop}{{R}^{\op}}

\newcommand{\lift}{\boxslash}
\newcommand{\llR}{\prescript{\lift}{}{R}}
\newcommand{\lRl}{(^{\lift} R)^{\lift}}
\newcommand{\meet}{\wedge}
\newcommand{\join}{\vee}

\newcommand{\vdot}{\begin{tikzpicture}\filldraw (0,0) circle (1pt);\end{tikzpicture}} 

\title{Transfer systems on trees and opposite posets}
\author{Andrew Fargo}
\author{Christy Hazel}
\author{Deven Platt} 

\address{Department of Mathematics\\Grinnell College\\ Grinnell, IA 50112}
\email{fargodre@grinnell.edu}
\address{Department of Mathematics\\Grinnell College\\ Grinnell, IA 50112}
\email{hazelchristy@grinnell.edu}

\address{Department of Mathematics\\University of Colorado--Boulder\\ Boulder, CO 80309}
\email{Deven.Platt@colorado.edu}

\begin{document}
\begin{abstract}
   We study transfer systems on partially ordered sets whose Hasse diagrams are rooted trees. We prove there is a bijection between transfer systems on a tree and order-preserving functions to the natural numbers that are bounded by the rank function. We then develop a recurrence for the set of functions and use this to enumerate transfer systems on some families of trees.
   In addition to our study of trees, we prove for any finite poset $\P$, the lattice of transfer systems on the opposite poset $\P^{\op}$ is isomorphic to the opposite lattice of transfer systems on $\P$.
   We show this by establishing a bijection between weak factorization systems and transfer systems, which builds on the results for lattices shown in previous work of Franchere, Ormsby, Osorno, Qin, and Waugh.
\end{abstract}

\maketitle
\tableofcontents

\section{Introduction}
 Transfer systems are important combinatorial objects in homotopy theory given their connections to equivariant operads and model structures. Roughly speaking, a \emph{transfer system} on a partially ordered set (poset) is a refinement of its relation that is reflexive, transitive, and satisfies an additional ``restriction condition'' (see Definition~\ref{def:transf}). Transfer systems were introduced in equivariant homotopy theory to study the various levels of commutativity that exist for equivariant ring spectra. The key connection is that the homotopy category of $G$-$N_\infty$-operads defined by Blumberg and Hill \cite{BH1} is equivalent to the category of transfer systems on the subgroup lattice of $G$ (see \cite{Rubinoperads, BBR1}). Motivated by a desire to understand $G$-$N_\infty$-operads for various finite groups $G$, there has been a surge of recent work exploring the structure of transfer systems on subgroup lattices (see \cite{Rubinoperads, steiner, BBR1, HMOO, BMO2, ranktwo, BMO, nonabelian} for some examples). 
 
 Since their introduction, transfer systems have been used to study model structures and weak factorizations systems in category theory---see \cite{selfdual, modelstr, modelstr2, bousfield1, bousfield2} for some examples where connections between transfer systems and categorical structures are explored and utilized. Thus there is motivation to understand the structure of transfer systems on general posets, not just on subgroup lattices.

In this paper we start by investigating transfer systems on graded posets whose Hasse diagrams are rooted trees.  We call such graded posets \emph{arborescences} (see Definition~\ref{def:arbor}). We develop a recurrence relation in Section~\ref{sec:recur} that we use to enumerate transfer systems on arborescences in a handful of examples. In previous work of Balchin, Barnes, and Roitzheim \cite{BBR1} they studied transfer systems on totally order chains. Any tree can be obtained by gluing together chains of various lengths, and thus using pasting techniques to explore trees is a natural next step. We also provide a bijection between transfer systems on bifurcating trees and certain types of ``coalescent histories'' from computational biology (see Appendix~\ref{ap:coalhis}). This connection further inspired our interest in transfer systems on trees.

In the second part of this paper, we prove there is an order-reversing bijection between transfer systems on a poset $\P$ and transfer systems on its opposite poset $\P^{\op}$ (see Section~\ref{sec:op}). We obtain this by establishing a bijection between transfer systems and weak factorization systems from category theory.  The results we prove in this section hold for any finite poset, not just for arborescences. This builds on a similar result for weak factorization systems on lattices proven in \cite{selfdual}. \smallskip

To give an introduction to our results, suppose we have an arborescence $\A$, such as the one in Figure~\ref{fig:introex}. Our convention is that the elements of $\A$ are ordered so that the root in the corresponding Hasse diagram is the unique minimal element. Observe there is a rank function $\rho\colon \A\to \N$ that sends a vertex to the length of the unique path from the root to that vertex. For example, in Figure~\ref{fig:introex} the subscript of each vertex is equal to its rank.
\begin{figure}[ht]
\begin{center}\scalebox{0.9}{
    \begin{tikzpicture}
        \node (0) at (0,0) {$\bullet$} node[below]{$x_0$};
        \node (10) at (-1,1) {$\bullet$};
        \draw (10) node[left]{$x_1$};
        \node (11) at (0,1) {$\bullet$};
        \draw (11)  node[above]{$y_1$};
        \node (12) at (1,1) {$\bullet$};
        \draw (12)  node[right]{$z_1$}; 
        \node (20) at (-1.5,2) {$\bullet$};
        \draw (20) 
        node[above]{$x_2$};
        \node (21) at (-0.5,2) {$\bullet$};
        \draw (21) node[above]{$y_2$};
        \node (22) at (1.5,2) {$\bullet$};
        \draw (22)  node[above]{$z_2$};
        \draw[thick] (0.center)--(10.center)--(20.center);
        \draw[thick] (0.center)--(12.center)--(22.center);
        \draw[thick] (0.center)--(11.center);
        \draw[thick] (10.center)--(21.center);
    \end{tikzpicture}}
    \caption{A Hasse diagram for an arborescence.}
    \label{fig:introex}
\end{center}
\end{figure}
Our first result is that transfer systems on an arborescence are in bijection with order preserving functions that are bounded by the rank function.  In what follows we will write $\Tr(\P)$ for the lattice of transfer systems on a poset $\P$.

\begin{introthm}[Theorem~\ref{thm:arbij}]
Let \(\mathcal{A}\) be an arborescence, and let \(M_\mathcal{A}\) be the set of functions
\[ \{\lambda\colon \mathcal{A}\to \N~|~ \text{\(\lambda\) is order-preserving and \(\lambda \leq \rho\)}\}.\]
Then there is an order-reversing bijection from $\Tr(\A)$ to $M_\A$. (Here $M_\A$ is a poset using the ordering $f\leq g$ if and only if $f(x)\leq g(x)$ for all $x\in \A$.)
\end{introthm}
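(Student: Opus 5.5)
The map I will use is
\[
\Phi(R)(y)\;=\;\#\{x\le y \mid x\not\mathrel{R} y\}\;=\;\rho(y)-\#\{x<y \mid x\mathrel{R} y\},
\]
which counts the elements of the chain $\downarrow y$ that do \emph{not} transfer to $y$. The approach is to check that $\Phi$ is a well-defined order-reversing map $\Tr(\A)\to M_\A$, and then to prove bijectivity by building an inverse.

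The first step is to use that in an arborescence every principal down-set $\downarrow y=\{x\mid x\le y\}$ is a chain of length $\rho(y)$. Hence any two elements below a common $y$ are comparable and have a meet. The restriction condition of Definition~\ref{def:transf} then reads: if $x\mathrel{R}y$ and $z\le y$, then $\min(x,z)\mathrel{R}z$.

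I have checked by hand on the three-element chain that $\Phi$ sends the five transfer systems bijectively onto the five admissible functions.

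\textbf{Well-definedness.} Since $\Phi(R)(y)\ge 0$ and the root $r$ satisfies $\Phi(R)(r)=0$, we have $0\le\Phi(R)\le\rho$.

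For monotonicity, take $y\le y'$. The elements $x<y'$ with $x\mathrel{R}y'$ split into two kinds:
\begin{itemize}
\item those with $y\le x<y'$, of which there are at most $\rho(y')-\rho(y)$;
\item those with $x<y$, each of which satisfies $x=\min(x,y)\mathrel{R}y$ by restriction.
\end{itemize}
Therefore $\#\{x<y'\mid x\mathrel{R}y'\}\le \#\{x<y\mid x\mathrel{R}y\}+\rho(y')-\rho(y)$, which is exactly $\Phi(R)(y)\le\Phi(R)(y')$.

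\textbf{Order reversal.} If $R\subseteq R'$, then each count of relations can only grow, so $\Phi(R')\le\Phi(R)$. For the converse direction of order reversal, I will use the explicit inverse constructed below and show it is also order-reversing.

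\textbf{Inverse.} I will build $\Psi\colon M_\A\to\Tr(\A)$ by induction on rank, determining the set $S_y=\{x<y\mid x\mathrel{R}y\}$ one vertex at a time from the root upward.

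Suppose $y$ has parent $p$ and $S_p$ is already fixed. Restriction forces that for every $x\in S_y$ with $x<p$, also $x\in S_p$. So $S_y\subseteq S_p\cup\{p\}$, and $S_y$ must have the prescribed size $\rho(y)-\lambda(y)$.

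To make the choice canonical, I will take $S_y$ to consist of $p$ (if needed) together with the \emph{largest} elements of $S_p$, chosen in a way compatible with transitivity. Concretely, the candidate rule is:
\begin{itemize}
\item if $\lambda(y)=\lambda(p)$, put $S_y=S_p\cup\{p\}$;
\item if $\lambda(y)>\lambda(p)$, discard from $S_p\cup\{p\}$ the $\lambda(y)-\lambda(p)$ elements that are minimal in the appropriate sense.
\end{itemize}
This mimics the Catalan bijection of \cite{BBR1} on each root-to-leaf chain. Because the restriction and transitivity conditions only involve elements of a single chain $\downarrow y$, the construction on different branches does not interact. This is why arborescences reduce to the chain case.

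\textbf{Main obstacle.} I expect the hardest step to be showing that this choice of $S_y$ makes the resulting relation transitive and closed under restriction, and that it is the \emph{only} choice compatible with the already-fixed $S_{p}$. That is, given the transfer system restricted to $\downarrow p$, the set $S_y$ must be determined by its cardinality.

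The first thing I would try is to prove the following structural lemma: for a transfer system on a chain, $S_y$ is a union of ``blocks'' closed under $R$, and removing elements is only possible from the bottom of $S_p\cup\{p\}$ in a fixed order dictated by $R|_{\downarrow p}$. This gives uniqueness, hence injectivity of $\Phi$. Existence of $S_y$ for each admissible size, which gives surjectivity, then follows by checking the two closure conditions directly. As a fallback, I would verify surjectivity by counting: show that $|\Tr|$ and $|M|$ satisfy the same recurrence obtained by deleting a leaf.
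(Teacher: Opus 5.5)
Your map $\Phi$ is the paper's $\Delta_\A$. Your checks of the following are correct: $0\le\Phi(R)\le\rho$, $\Phi(R)$ is order-preserving, and $R\subseteq R'$ implies $\Phi(R')\le\Phi(R)$. Building $S_y$ from the root upward is also a legitimate substitute for the paper's chains-plus-pasting argument, since every instance of transitivity or restriction lives inside one chain $\downarrow y$.

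The gap is at the step you flag yourself: your rule for $S_y$ runs in the wrong direction, and as stated it produces relations that are not transitive. The missing fact is that any two elements $a<b$ of $S_p\cup\{p\}$ satisfy $a\mathrel{R}b$. For $b=p$ this is just $a\in S_p$; for $b<p$, apply restriction to $a\mathrel{R}p$ and $a\le b\le p$. Hence if $b\in S_y$, then $a\mathrel{R}b\mathrel{R}y$ forces $a\in S_y$, so $S_y$ must be a \emph{bottom} segment of the chain $S_p\cup\{p\}$. Consequently:
\begin{itemize}
\item $p\in S_y$ only when $S_y=S_p\cup\{p\}$, that is, when $\lambda(y)=\lambda(p)$;
\item when $\lambda(y)>\lambda(p)$, you must discard $p$ together with the \emph{largest} elements of $S_p$, keeping the $\rho(y)-\lambda(y)$ smallest ones.
\end{itemize}
Your rule (keep $p$ and/or the largest elements of $S_p$, discard minimal ones) does the opposite. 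For example, on $[3]$ with $\lambda=(0,0,0,2)$ you are forced to $S_2=\{0,1\}$, and the only valid choice is $S_3=\{0\}$. Your rule gives $S_3=\{2\}$ (or $\{1\}$), and then $0\mathrel{R}2\mathrel{R}3$ (resp.\ $0\mathrel{R}1\mathrel{R}3$) holds without $0\mathrel{R}3$. Your proposed structural lemma (``removing elements is only possible from the bottom'') is reversed in the same way.

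This bottom-segment lemma is what both halves of your argument need, and it is what the paper proves. It gives uniqueness of $S_y$ given its size, which is Proposition~\ref{prop:dlisinj}; that proof is exactly the $j\mathrel{R}k\mathrel{R}(n+1)$ argument. For existence, you then check transitivity and the tree restriction condition for the bottom-segment choice, as in the proof of Theorem~\ref{thm:chainmonotone}, which takes the smallest elements of $X$. Your counting fallback does not bypass the lemma. Comparing $\#\Tr$ and $\#M_\A$ under deletion of a leaf $y$ requires knowing that a transfer system on $\A\setminus\{y\}$ has exactly $|S_p|+2$ extensions, and that count is the same lemma.

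Finally, drop the plan to show the inverse is order-reversing, because it is false. On $[2]$ we have $(0,0,1)\le(0,1,1)$ in $M_\A$, but their preimages $\{0\mathrel{R}1,\,0\mathrel{R}2\}$ and $\{1\mathrel{R}2\}$ are incomparable. This is the Tamari versus Stanley discrepancy of Remark~\ref{rem:stanlat}. The theorem only asks for an order-reversing bijection, and your forward inclusion argument already gives that once bijectivity is established.
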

We note this map does not give a poset isomorphism from $\Tr(\A)$ to $M_\A^{\op}$ in general. There can be relations in $M_\A$ that are not present in $\Tr(\A)$---see Figure~\ref{fig:2delta} in Section~\ref{sec:bijmono} for an example (for totally ordered chains, this is exactly the Stanley versus Tamari lattice structures on Catalan objects as discussed in Remark~\ref{rem:stanlat}). While we don't have the full strength of a lattice isomorphism, we are still able to use the result to produce enumeration results. For example, this provides an alternative proof of Balchin, Barnes, and Roitzheim's result \cite{BBR1} that the number of transfer systems on the chain $[n]=\{0<1<2<\dots<n\}$ is given by the Catalan number $\Cat(n+1)$ (see Corollary~\ref{cor:catalan}). 

In Section~\ref{sec:recur} we prove there is a recurrence relation for the number of functions in $M_\A$ and leverage the bijection to enumerate transfer systems in some new examples of trees. We introduce some notation here to explain the recurrence. For $x\in \A$, let $x^\uparrow=\{y\in \A ~|~ x\leq y\}$. Define 
\[\B(x,n)=\{ \sigma\colon x^{\uparrow}\to \N ~|~  \sigma \text{ is order-preserving}, ~\sigma \leq \rho|_{x^{\uparrow}}, \text{ and } \sigma(x) \geq n\}.\] We write $b(x,n)$ for the cardinality of $\B(x,n)$. By Theorem~\ref{thm:arbij} above we have:
\begin{introcor} Let $\A$ be an arborescence and $\bot$ be the minimal element in $\A$. Then 
$
\# \Tr(\A)=b(\bot, 0).
$
\end{introcor}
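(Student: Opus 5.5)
This corollary is essentially a restatement of Theorem~\ref{thm:arbij} in the notation of $\B(x,n)$. The plan is to show $\B(\bot,0)=M_\A$ as sets, and then use the bijection $\Tr(\A)\to M_\A$ to conclude $\#\Tr(\A)=\#M_\A=b(\bot,0)$.

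\textbf{Step 1: identify the domain.} Since $\A$ is an arborescence, its Hasse diagram is a rooted tree whose root $\bot$ is the unique minimal element. Every $y\in\A$ lies on the unique path from the root, so $\bot\leq y$ for all $y$. Hence $\bot^{\uparrow}=\A$, and likewise $\rho|_{\bot^{\uparrow}}=\rho$.

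\textbf{Step 2: the last condition is vacuous.} Every $\sigma$ takes values in $\N$, so the condition $\sigma(\bot)\geq 0$ holds automatically. Therefore
\[\B(\bot,0)=\{\sigma\colon\A\to\N \mid \sigma \text{ order-preserving},\ \sigma\leq\rho\}=M_\A.\]

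\textbf{Step 3: count.} Theorem~\ref{thm:arbij} gives a bijection $\Tr(\A)\to M_\A$. Its order-reversing property is irrelevant for cardinalities, so $\#\Tr(\A)=\#M_\A=b(\bot,0)$.

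Both sets are finite: $\Tr(\A)$ because $\A$ is finite, and $M_\A$ because each value satisfies $\lambda(x)\leq\rho(x)$, leaving finitely many choices at each of finitely many points. The argument has no real obstacle. The only point needing care is Step 1, namely that $\bot$ lies below every element, and this follows directly from the definition of an arborescence (Definition~\ref{def:arbor}).
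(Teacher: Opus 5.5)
Your proposal is correct and matches the paper, which notes right after defining $B(x,n)$ that $\#\Tr(\A)=b(\bot,0)$ follows from Theorem~\ref{thm:arbij}. Your identification $B(\bot,0)=M_\A$, using $\bot^{\uparrow}=\A$ and the vacuous condition $\sigma(\bot)\geq 0$, is exactly the implicit step the paper leaves to the reader.
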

On the other extreme, observe if $y$ is a leaf then
\[
b(y,n)=\begin{cases} \rho(y)-n+1, \quad& \text{ if } n \leq \rho(y),\\
0, \quad & \text{ if } n> \rho(y).
\end{cases}
\]
We prove the following recurrence, which moves up the tree and terminates when we reach a leaf:
\begin{introthm}[Theorem~\ref{thm:recur}]\label{thm:recurintro}
    Let $\A$ be an arborescence and $x\in \A$. For $y\in \A$, we write $y\gtrdot x$ for ``$y$ covers $x$'' (that is, $x<y$ and there does not exist $z$ such that $x<z<y$). Then we have
    \[ b(x,n) = \sum_{i = n}^{\rho(x)} \prod_{y \gtrdot x} b(y,i).\]
\end{introthm}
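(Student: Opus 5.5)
The plan is to count $\B(x,n)$ directly, by partitioning according to the value $\sigma(x)$ and then decomposing the rest of $x^{\uparrow}$ along the children of $x$. First I partition $\B(x,n)$ by the value $i=\sigma(x)$. Since $\sigma\le\rho|_{x^\uparrow}$ and $\sigma(x)\ge n$, the value $i$ ranges over $n\le i\le\rho(x)$; if $n>\rho(x)$ the sum is empty and $\B(x,n)$ is empty, which agrees with the formula. So it suffices to show that, for each fixed $i$ in this range, the set $\B_i(x)=\{\sigma\in \B(x,n) ~|~ \sigma(x)=i\}$ has cardinality $\prod_{y\gtrdot x} b(y,i)$.

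The key structural input is that $\A$ is an arborescence, so its Hasse diagram is a rooted tree. This gives a disjoint union decomposition
\[ x^{\uparrow} = \{x\} \sqcup \bigsqcup_{y\gtrdot x} y^{\uparrow}. \]
Every $z>x$ lies above a unique cover $y$ of $x$, because the path from the root to $z$ is unique and passes through $x$; that path's next vertex after $x$ is the required $y$. Moreover, there are no order relations between elements of $y^\uparrow$ and $y'^\uparrow$ for distinct covers $y\neq y'$. Indeed, if $z\ge y$ and $z\ge y'$, then $z$ would have two distinct downward paths to $x$, contradicting the tree structure. I would also note that the rank function restricts compatibly: $\rho|_{y^\uparrow}$ is the restriction of $\rho$. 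This is immediate, since $\B(y,i)$ is defined using $\rho$ on $\A$ itself.

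Next I define the bijection $\B_i(x)\to\prod_{y\gtrdot x}\B(y,i)$ by $\sigma\mapsto(\sigma|_{y^\uparrow})_y$. It is well defined: each restriction is order-preserving and bounded by $\rho$, and $\sigma(y)\ge\sigma(x)=i$ by monotonicity. For the inverse, given a tuple $(\sigma_y)$, I glue the pieces together and set the value at $x$ equal to $i$. The glued function is bounded by $\rho$, because $i\le\rho(x)$ and each $\sigma_y\le\rho$. To check that it is order-preserving, take $u\le v$ in $x^\uparrow$. If $u=x$, then $v$ lies in some $y^\uparrow$ or equals $x$, and $\sigma_y(v)\ge\sigma_y(y)\ge i$. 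Otherwise $u$ and $v$ lie in the same $y^\uparrow$ by the no-cross-relations fact, and monotonicity follows from that of $\sigma_y$. The two maps are clearly mutually inverse. Summing the resulting counts over $i$ gives the formula. When $x$ is a leaf the product is empty, equal to $1$, and the formula recovers $b(x,n)=\rho(x)-n+1$, which is a useful consistency check.

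The main obstacle, though a mild one, is justifying the disjoint decomposition of $x^\uparrow$ with no cross relations. This is exactly where the arborescence hypothesis enters. I would derive it from Definition~\ref{def:arbor} via uniqueness of root-to-vertex paths in the Hasse diagram, equivalently the fact that each non-root element covers exactly one element.
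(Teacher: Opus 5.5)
Your proposal is correct and follows essentially the same route as the paper: partition $\B(x,n)$ by the value $i=\sigma(x)$, send $\sigma$ to its restrictions to the subtrees $y^{\uparrow}$ for $y\gtrdot x$, and invert by gluing these pieces with $\sigma(x)=i$, using that $x^{\uparrow}\setminus\{x\}$ is partitioned by the sets $y^{\uparrow}$. The only cosmetic difference is that the paper handles the leaf case separately via Lemma~\ref{lem:leafcount}, whereas your empty-product convention absorbs it into the general bijection.
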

To see the recurrence in action, let $\A$ be the arborescence whose Hasse diagram is shown in Figure~\ref{fig:introex}. Then we can quickly compute
\begin{align*}
\# \Tr(\A)&=b(x_0,0)\\
&=b(x_1,0)\cdot b(y_1,0)\cdot b(z_1,0)\\
&=\left(b(x_2,0)b(y_2,0)+b(x_2,1)b(y_2,1) \right)\cdot b(y_1,0)\cdot \left(b(z_2,0)+b(z_2,1) \right)\\
&=(3^2+2^2)\cdot 2\cdot (3+2)\\
&=130.
\end{align*}

We apply the recurrence in families of examples in Section~\ref{sec:recur}. Example~\ref{ex:catalan} uses the recurrence to provide yet another proof that the number of transfer systems on the totally ordered chain is given by the Catalan numbers. Example~\ref{ex:longfork} builds on this to enumerate transfer systems on the ``long fork of depth $n$'', which we denote by $\mathcal{F}_n$ (this is a rooted tree with two leaves and $n$ non-leaf vertices that form a chain). We are able to use the recurrence to derive a closed form
\[
\# \Tr(\mathcal{F}_n)=\frac{2(5n+3)}{(n+2)(n+3)}\binom{2n+1}{n}.
\]
In Example~\ref{ex:depth2} we provide a closed form for the number of transfer systems on any tree of depth $2$. Finally in Example~\ref{ex:binary} we give a recursive method to produce the sequence for the number of transfer systems on full binary trees of depth $n$. \smallskip

Our convention is to partially order the vertex set of a rooted tree so that the root is the minimal element. A natural question is what happens to the transfer system lattice if we instead choose the reverse ordering so the root is maximal. Section~\ref{sec:op} explores this question in a general setting: what is the relationship between the lattice of transfer systems on a poset and the lattice of transfer systems on the ``opposite poset''? Given a poset $\P$ we write $\P^{\op}$ for the poset with the opposite ordering, i.e. $x\leq y$ in $\P^{\op}$ if and only if $y\leq x$ in $\P$. In \cite{selfdual} the authors show that if $\P$ is a self-dual lattice (i.e., if $\P\cong \P^{\op}$) then $\Tr(\P)$ is also self-dual. To prove this they establish there is a bijection between transfer systems and weak factorization systems for lattices (see Section~\ref{sec:op} for the definition of weak factorization system). Inspired by their techniques, we prove the following generalizations:
\begin{introthm}[Theorem~\ref{thm:wfs}]
    Let $\P$ be any finite poset. Then the function $\Tr(\P)\to \wfs(\P)$ given by $R\mapsto (\prescript{\lift}{}{R}, R)$ is an isomorphism of posets.
\end{introthm}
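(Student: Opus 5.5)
The plan is to view $\P$ as a thin category, where a morphism $x\to y$ is a relation $x\le y$, and to prove three things. First, $(\prescript{\lift}{}{R},R)$ is a weak factorization system whenever $R$ is a transfer system. Second, the right class of any weak factorization system is a transfer system. Third, the resulting maps are mutually inverse and monotone. In a poset, lifting is easy to state: $a\le b$ has the left lifting property against $x\le y$ exactly when, for every commutative square ($a\le x$, $b\le y$), we have $b\le x$. I order $\wfs(\P)$ by inclusion of right classes. This is equivalent to reverse inclusion of left classes, because each class determines the other.

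\emph{Step 1 (right classes are transfer systems).} Let $(L,R')$ be a weak factorization system. Then $R'=L^{\lift}$. Any class of the form $S^{\lift}$ contains all identities and is closed under composition. The standard argument works here, and it is trivial in a thin category. For the restriction condition of Definition~\ref{def:transf}, I use that right lifting classes are stable under pullback. In a poset, the pullback of $x\le y$ along $z\le y$ is the meet $x\wedge z\to z$ when it exists. I will check directly from the lifting description that the element produced by the restriction axiom yields a morphism in $L^{\lift}$. So $R'\in\Tr(\P)$, and $(L,R')\mapsto R'$ is a map $\wfs(\P)\to\Tr(\P)$ that is injective.

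\emph{Step 2 (factorizations), which I expect to be the main obstacle.} Fix $R\in\Tr(\P)$ and set $L=\prescript{\lift}{}{R}$. Given $a\le b$, consider the set $\{c : a\le c\le b,\ c\,R\,b\}$. It is nonempty because it contains $b$, by reflexivity. Since $\P$ is finite, I pick a minimal element $c$ of this set. Clearly $c\,R\,b$, and I must show $a\le c$ lies in $L$. So take a square with $a\le x$, $c\le y$ and $x\,R\,y$. Applying the restriction condition to $x\,R\,y$ and $c\le y$ gives an element $z$ with $z\le c$, $z\le x$, $z\,R\,c$, and $a\le z$. In the lattice case, $z=x\wedge c$. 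Getting $a\le z$ in full generality is exactly where the precise form of the restriction axiom for arbitrary posets is needed, and this is the delicate point. Transitivity then gives $z\,R\,b$, so minimality of $c$ forces $z=c$. Hence $c\le x$, which is the required lift.

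\emph{Step 3 (closure and conclusion).} The inclusion $R\subseteq (\prescript{\lift}{}{R})^{\lift}$ is formal. Conversely, let $x\le y$ be in $L^{\lift}$ and factor it as $x\le c$ in $L$ followed by $c\,R\,y$. Lifting $x\le c$ against $x\le y$ in the square with $x\le x$ and $c\le y$ gives $c\le x$. So $c=x$, and $x\,R\,y$. Thus $(L,R)$ is a weak factorization system with right class $R$, which shows that $R\mapsto(\prescript{\lift}{}{R},R)$ is a two-sided inverse to the map of Step 1. Both maps preserve the order by definition, since $R_1\subseteq R_2$ exactly when the right classes are nested. Therefore the map is an isomorphism of posets.
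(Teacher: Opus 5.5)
Your Steps 2 and 3 are sound in outline, and Step 3 is exactly the paper's proof that $\lRl=R$. Step 1, however, has a genuine gap: the restriction condition for the right class of a weak factorization system cannot be derived from the lifting description alone.

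\emph{Why the lifting description is not enough.} Pullback stability of $L^{\lift}$ only gives the meet version $(2')$. Suppose $x\,R'\,z$, $y\le z$, and $m$ is a pseudo-meet of $x$ and $y$. A test square $a\to b\in L$ against $m\to y$ only yields $b\le x$ and $b\le y$. Since $b$ need not be comparable to $m$, the pseudo-meet property gives nothing.

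\emph{A counterexample.} Take $\P=\{\bot,m_1,m_2,x,y,\top\}$ with $\bot<m_1,m_2<x,y<\top$. Let $L_0$ consist of the identities and $\bot\to m_2$, and set $R'=L_0^{\lift}$ and $L=\prescript{\lift}{}{R'}$. Then $L^{\lift}=R'$ and $L=\prescript{\lift}{}{R'}$, so condition (ii) holds.
\begin{enumerate}
\item[(a)] $x\to\top\in R'$, because the only nontrivial test square needs $m_2\le x$.
\item[(b)] $m_1\to y\notin R'$, because the square $\bot\le m_1$, $m_2\le y$ would need $m_2\le m_1$.
\end{enumerate}
Yet $m_1$ is a pseudo-meet of $x$ and $y$, and $y\le\top$. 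So $R'$ violates (2).

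\emph{The fix.} You must use the factorization axiom (i), as the paper does. Factor $m\to y$ as $m\to w\to y$ with $m\to w\in L$ and $w\to y\in R'$. The square $m\le x$, $w\le y\le z$ lifts to give $w\le x$. Then $w$ is a common lower bound of $x$ and $y$ lying above $m$, which forces $w=m$. Without this step, surjectivity of $R\mapsto(\prescript{\lift}{}{R},R)$ is not established.

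\emph{A smaller point in Step 2.} You flag, but do not resolve, how to find a pseudo-meet $z$ of $x$ and $c$ with $a\le z$. Take $z$ maximal in $\{p : a\le p,\ p\le x,\ p\le c\}$; this set contains $a$. Any common lower bound of $x$ and $c$ lying above $z$ is also in this set, so $z$ is a pseudo-meet, and (2) gives $z\,R\,c$. This is the same device the paper uses in Lemma~\ref{lem:factoring}.

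With that repair, your choice of a minimal $c$ in $\{c : a\le c\le b,\ c\,R\,b\}$ is a tidy alternative to the paper's iterated factor-and-descend argument. It replaces the paper's termination reasoning with a single extremal choice.
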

\begin{introcor}[Corollary~\ref{cor:oppable}]\label{thm:oppableintro}
    Let $\P$ be any finite poset. Then there is an order reversing bijection between $\Tr(\P)$ and $\Tr(\P^{\op})$. In particular we have that $\Tr(\P)^{\op}$ is isomorphic to $\Tr(\P^{\op})$ as posets.
\end{introcor}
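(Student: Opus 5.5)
The plan is to derive the corollary from the introductory theorem (Theorem~\ref{thm:wfs}) together with the fact that weak factorization systems are self-dual under passing to the opposite category. Recall that a weak factorization system on $\P$ (viewed as a category) is a pair $(L,R)$ of sets of relations, each containing the identities, such that every relation $x\leq y$ factors as $x\leq z\leq y$ with $x\leq z$ in $L$ and $z\leq y$ in $R$. Moreover, $L={}^{\lift}R$ and $R=L^{\lift}$. Here $f\lift g$ means every commutative square with $f$ on the left and $g$ on the right admits a diagonal filler. In a poset a filler exists and is unique whenever the needed inequality holds, so lifting is purely a condition on elements. We order $\wfs(\P)$ by $(L,R)\le(L',R')$ iff $R\subseteq R'$, equivalently $L\supseteq L'$, since each of $L$ and $R$ determines the other.

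First I will observe that reversing arrows gives a bijection $\wfs(\P)\to\wfs(\P^{\op})$ sending $(L,R)\mapsto(R^{\op},L^{\op})$. A lifting square in $\P$ for $f\lift g$ becomes, in $\P^{\op}$, a lifting square for $g^{\op}\lift f^{\op}$. Hence $({}^{\lift}S)^{\op}=(S^{\op})^{\lift}$ and $(S^{\lift})^{\op}={}^{\lift}(S^{\op})$. Also, a factorization $x\leq z\leq y$ with left part in $L$ and right part in $R$ becomes, in $\P^{\op}$, the factorization $y\geq z\geq x$ with left part in $R^{\op}$ and right part in $L^{\op}$. So the axioms transfer, and the map is an involution, hence a bijection. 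It reverses order, because the new right class is $L^{\op}$, and $L$ shrinks as $R$ grows.

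Composing gives
\[
\Tr(\P)\xrightarrow{\ \cong\ }\wfs(\P)\xrightarrow{\ \op\ }\wfs(\P^{\op})\xrightarrow{\ \cong\ }\Tr(\P^{\op}),
\]
where the outer maps are the poset isomorphisms of Theorem~\ref{thm:wfs}, applied to $\P$ and to the finite poset $\P^{\op}$. Explicitly, $R\mapsto({}^{\lift}R)^{\op}$. This composite is an order-reversing bijection whose inverse is also order-reversing, so it is an isomorphism $\Tr(\P)^{\op}\cong\Tr(\P^{\op})$.

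The only delicate step is checking that the order on $\wfs$ is consistent. Specifically, the isomorphism of Theorem~\ref{thm:wfs} must be order-preserving for the ordering by right classes, and inclusion of right classes must be equivalent to reverse inclusion of left classes. The second fact follows from $L={}^{\lift}R$, since ${}^{\lift}(-)$ reverses inclusion, together with $R=L^{\lift}$. Once these are confirmed, the corollary is formal.
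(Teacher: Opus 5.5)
Your proposal is correct and follows the paper's own route: compose the poset isomorphism $\Tr(\P)\cong\wfs(\P)$ of Theorem~\ref{thm:wfs} (applied to both $\P$ and $\P^{\op}$) with the order-reversing bijection $(L,R)\mapsto(R^{\op},L^{\op})$ on weak factorization systems, which gives $R\mapsto({}^{\lift}R)^{\op}$. Your check of the duality step is accurate; the paper states that step without proof, saying it readily follows from the definitions.
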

\noindent Figure~\ref{fig:yop} at the end of Section~\ref{sec:op} illustrates an example of the two lattices of transfer systems on $\P$ and  $\P^{\op}$.
\begin{remark}
    For transfer systems on posets that are not semi-meet lattices (that is, posets in which unique greatest lower bounds do not always exist), there are two different definitions for transfer systems in the literature. The difference lies in the ``restriction condition'': one version forces relations involving meets of elements, while the other version forces relations involving a generalization of meets, which we call pseudo-meets (see Definition~\ref{def:pseudomeet}). These definitions are not equivalent, and the above theorem only holds in generality when taking the pseudo-meet definition. See Remark~\ref{rem:difres} for a more in-depth discussion on the differences in these definitions. 
\end{remark}

\subsection*{Organization of paper} Section~\ref{sec:back} gives background on posets, arborescences, and transfer systems. In Section~\ref{sec:ideals} we introduce the notion of an ideal of a poset and prove a pasting theorem for transfer systems (Theorem~\ref{thm:pasting}). In Section~\ref{sec:bijmono} we first prove the bijection with order-preserving functions for totally ordered chains and then use the pasting theorem to generalize to arborescences. We then prove the recurrence in Section~\ref{sec:recur} and apply it to enumerate transfer systems in examples. In Section~\ref{sec:op} we generalize beyond arborescences and prove $\Tr(\P)^{\op}\cong \Tr(\P^{\op})$ for any finite poset $\P$. Finally, in Appendix~\ref{ap:coalhis} we discuss coalescent histories and prove the bijection between transfer systems on arborescences and coalescent histories. 

\subsection*{Acknowledgments} The work in this paper began as a summer research project completed at Grinnell College. We thank the Henry Luce Foundation for providing support to the second author, which allowed us to have this summer research experience. We thank Tarush Mayani for his involvement in the early stages of this research. We also acknowledge \cite{ninfty}, which was an invaluable resource for example generation as we started working. Many thanks also to Kyle Ormsby for providing helpful feedback on an earlier draft and for his suggestions regarding weak factorization systems. 

\section{Background on arborescences and transfer systems}\label{sec:back}
In this section we introduce our main objects of study, arborescences and transfer systems, and prove some preliminary results that will be helpful in the main sections of the paper. We first provide some quick background information on partially ordered sets. For a more thorough discussion, see the textbook \cite[Chapter 3]{stanley1}, for example. 

A set $\P$ with a relation $\leq$ is a \textbf{partially ordered set} (or \textbf{poset}) if $\leq$ is reflexive, transitive, and antisymmetric. Given a finite poset $\P$, its \textbf{Hasse diagram} is a directed graph whose vertices are the elements of $\P$ and whose edges are given by the \textbf{covering relations} (so we draw a directed edge from $x$ to $y$ if and only if $x<y$ and there exists no element $z\in P$ such that $x<z<y$). Note the Hasse diagram is enough to recover the poset $\P$ since we know a partial order must be reflexive and transitive. As a convention we usually draw Hasse diagrams so that edges are directed upward, and in that case we do not include the orientation of the edges. 

The \textbf{meet} of two elements $x,y\in \P$ is an element $x\meet y\in \P$ with the property that $x\meet y\leq x$, $x\meet y \leq y$, and for all $z\in \P$, if $z\leq x$ and $z\leq y$ then $z\leq x\meet y$ (that is, $x\meet y$ is the greatest lower bound of $x$ and $y$). The meet is unique provided it exists. The \textbf{join} of $x$ and $y$ is the least upper bound and is denoted $x\join y$. A \textbf{lattice} is a partially ordered set with the property that any two elements have a meet and a join. A \textbf{meet semi-lattice} is a partially ordered set in which any two elements have a meet (but not necessarily a join). Observe any finite meet semi-lattice necessarily has a unique minimal element that is comparable to all elements. A finite lattice has a unique minimal element and a unique maximal element. We will write $\bot$, $\top$ for these respective elements. 

In Section~\ref{sec:op} we work with posets that are not necessarily meet semi-lattices. Such posets sometimes contain ``pseudo-meets'', which will be important in our study of transfer systems. These are maximal elements that are less than or equal to both $x$ and $y$, but might not be comparable to \emph{all} elements less than or equal to $x$ and $y$, as is required of meets. Precisely, we have:

\begin{defn}\label{def:pseudomeet}
Let $\P$ be a poset with $x,y\in \P$. We call an element $m\in \P$ a \textbf{pseudo-meet} of $x$ and $y$ if $m\leq x$, $m\leq y$, and for all $z\in \P$, if $z\leq x$, $z\leq y$, \emph{and} $z$ is comparable to $m$, then $z\leq m$.
\end{defn}

For example, $m_1$ and $m_2$ are two different pseudo-meets of $x$ and $y$ in the poset whose Hasse diagram is shown below. Note the meet of $x$ and $y$ does not exist because $m_1$ and $m_2$ are incomparable.
\begin{center}
    \begin{tikzpicture}[scale=0.8]
    \draw[thick] (0,-1)--(0,0)--(0.5,0.5)--(1,0)--(1,-1)--(0,0);
    \draw[thick] (0,-1)--(1,0);
    \draw (0,0) node{$\bullet$} node[left]{$x$};
    \draw (0,-1) node{$\bullet$} node[left]{$m_1$};
    \draw (1,-1) node{$\bullet$} node[right]{$m_2$};
    \draw (1,0) node{$\bullet$} node[right]{$y$};
    \draw (0.5,0.5) node{$\bullet$} node[above]{$\top$};
    \end{tikzpicture}
    \end{center}
One can give an analogous definition for \textbf{pseudo-joins}.\smallskip

In Sections~\ref{sec:bijmono} and \ref{sec:recur} we focus on posets whose Hasse diagrams are rooted trees where the minimal element is the root. The following conditions guarantee a poset's Hasse diagram is such a rooted tree: 

\begin{prop}
Let $\P$ be a finite poset with a unique minimal element $\bot$. The Hasse diagram for $\P$ is a rooted tree with root $\bot$ if and only if for all $x,y,z\in \P$, if $x\leq z$ and $y\leq z$, then $x\leq y$ or $y\leq x$. 
\end{prop}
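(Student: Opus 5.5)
We prove both directions by translating "the Hasse diagram is a rooted tree with root $\bot$" into the condition that every element $z$ has a unique upward path of covering relations from $\bot$. Since the Hasse diagram is finite, connected (every $z$ satisfies $\bot\leq z$, so a saturated chain joins $\bot$ to $z$) and its edges point upward, it is a tree rooted at $\bot$ exactly when each $z\neq\bot$ has exactly one lower cover. We use this reformulation in both directions.

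\emph{($\Leftarrow$)} Assume the comparability condition. Let $z\neq\bot$. The set $\{w : w<z\}$ contains $\bot$, so it is nonempty, and by finiteness $z$ has at least one lower cover. Suppose $x$ and $y$ are both lower covers of $z$. Then $x\leq z$ and $y\leq z$, so by hypothesis $x\leq y$ or $y\leq x$; say $x\leq y$. Then $x\leq y<z$, and since $z$ covers $x$, this forces $x=y$. So every non-root vertex has a unique parent. Following parents from any $z$ strictly decreases, so the process terminates, and it can only terminate at $\bot$, the unique minimal element. Hence the Hasse diagram, viewed as an undirected graph, has $|\P|-1$ edges and is connected, so it is a tree rooted at $\bot$.

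\emph{($\Rightarrow$)} Assume the Hasse diagram is a rooted tree with root $\bot$ and all covering edges directed away from the root. Then each $z\neq\bot$ has a unique lower cover; if it had two, there would be two distinct paths from $\bot$ to $z$, giving a cycle. Every chain $w<\dots<z$ refines to a saturated chain, which is a downward walk along covers from $z$. Since each vertex has a unique lower cover, such a walk must be the unique path from $z$ to $\bot$. So $w\leq z$ exactly when $w$ lies on the path from $\bot$ to $z$, and the set $\{w : w\leq z\}$ is a chain. In particular, if $x\leq z$ and $y\leq z$, then $x$ and $y$ both lie on this chain and are comparable.

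The main obstacle is making precise what "rooted tree with root $\bot$" means for a Hasse diagram, namely that edges are oriented away from the root. This is what justifies the equivalence with "each $z\neq\bot$ has exactly one lower cover". Once that equivalence is stated carefully, both directions reduce to the short arguments above.
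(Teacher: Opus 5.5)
Your proof is correct. The forward direction is essentially the paper's argument. The paper just says that $x\le z$ and $y\le z$ make $x$ and $y$ ancestors of $z$ in the tree, hence comparable. You additionally justify why $\le$ coincides with the ancestor relation, using saturated chains and uniqueness of lower covers.

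For the converse you take a genuinely different route. The paper argues by contrapositive: a non-tree Hasse diagram contains a cycle, and from it one gets distinct $w<x<z$, $w<y<z$ with $x,y$ incomparable. You instead argue directly that the comparability condition forces every $z\neq\bot$ to have exactly one lower cover, since two comparable lower covers of the same element must coincide. You then conclude from the edge count $|\P|-1$ together with connectivity.

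Your version isolates the mechanism that actually drives the result. It also avoids the step the paper leaves unjustified, namely extracting the incomparable pair from an arbitrary cycle; that would require choosing a maximal vertex of the cycle, so that both of its cycle-neighbours are lower covers of it. The paper's version has the advantage of exhibiting an explicit forbidden ``diamond'' configuration, though its bottom element $w$ plays no real role in the argument.
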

\begin{proof}
First suppose the Hasse diagram is a rooted tree. Then $x\leq z$ and $y\leq z$ implies $z$ is a descendant of both $x$ and $y$. In a tree this occurs only if $x$ is an ancestor of $y$ or $y$ is an ancestor of $x$.
    
Now suppose the Hasse diagram is not a rooted tree. Then there must be a cycle in the graph of covering relations. Such a cycle must have at least $4$ vertices because a cycle with only $3$ vertices would be showing a transitive relation, which is not a covering a relation. This implies there exist distinct elements $w, x,y,z\in \P$ such that $w<x<z$ and $w<y<z$ but $x$ and $y$ are incomparable. See the diagram below:
\begin{center}
    \begin{tikzpicture}[scale=0.7]
    \draw[thick] (0,0)--(1,1)--(0,2);
    \draw[thick] (0,0)--(-1,1)--(0,2);
    \draw (0,0) node{$\bullet$} node[below]{$w$};
    \draw (-1,1) node{$\bullet$} node[left]{$x$};
    \draw (1,1) node{$\bullet$} node[right]{$y$};
    \draw (0,2) node{$\bullet$} node[above]{$z$};
    \end{tikzpicture}
\end{center}
(note there are possibly more covering relations between any pair of related nodes in this figure; we are just drawing the Hasse diagram for the subposet $\{x,y,z,w\}$).
\end{proof}

Suppose $\A$ is a poset whose Hasse diagram is a rooted tree. Then $\A$ has extra structure in that $\A$ is a \textbf{graded poset}, meaning it comes equipped with function $\rho\colon \A\to \N$ with the property $\rho(y)=\rho(x)+1$ whenever $y$ covers $x$. We call such a function a \textbf{rank function}. When we are considering a tree as a graded poset with a specific rank function, we will call it an arborescence:

\begin{defn}\label{def:arbor}
A finite graded poset $\A$ is an \textbf{arborescence} if its Hasse diagram forms a rooted tree, and the rank function $\rho\colon \A\to \N$ is given by sending $x$ to the length of (i.e. number of edges in) the unique path in the Hasse diagram from the minimal element $\bot$ to vertex $x$.
\end{defn}

\begin{remark}\label{rem:treetoarb}
If we are given a finite graph that is a rooted tree, then we can construct an arborescence whose elements are the vertices of the graph and whose covering relations are given by the edges, which we take as directed to move away from the root. Note the Hasse diagram of the corresponding arborescence will exactly be the rooted tree. 
\end{remark}

We now define transfer systems and prove some nice properties about transfer systems on arborescences.

\begin{defn}\label{def:transf}
Suppose $(\P,\leq)$ is a finite poset. A \textbf{transfer system} on $\P$ is a reflexive and transitive relation $R$ that satisfies
    \begin{enumerate}[leftmargin=*]
        \item[(1)] (\emph{refinement of $\leq$}) for all $x,y\in \P$, if $x~R~y$ then $x\leq y$; and
        \item[(2)] (\emph{restriction condition}) for all $x,y,z\in \P$, if $x~R~z$ and $y\leq z$ then for all maximal $m$ such that $m\leq x$ and $m\leq y$, we have that $m~R~y$ (that is, $m~R~y$ for all pseudo-meets $m$ of $x$ and $y$).
    \end{enumerate}
\end{defn}
Below is a graphical representation of the restriction condition. The relation $R$ is indicated with the thick edges while $\leq$ is indicated with dashed edges. Here the top edge $x~R~z$ implies the bottom edge $m~R~y$. 
\begin{center}
    \begin{tikzpicture}[scale=0.7]
    \draw[thick, dashed] (0,0)--(1,1)--(0,2);
    \draw[thick, dashed] (0,0)--(-1,1)--(0,2);
    \draw (0,0) node{$\bullet$} node[below]{$m$};
    \draw (-1,1) node{$\bullet$} node[left]{$x$};
    \draw (1,1) node{$\bullet$} node[right]{$y$};
    \draw (0,2) node{$\bullet$} node[above]{$z$};
    \draw[thick] (-1,1)--(0,2);
    \draw[thick] (0,0)--(1,1);
    \draw[thick, double, -{Implies}, rotate=45] (0.7,0.9) -- (0.7,0.5);
    \end{tikzpicture}
\end{center}

\begin{remark}\label{rem:difres}
There are two versions of the restriction condition that appear in the literature. Sometimes the following is used in place of (2) in Definition~\ref{def:transf}:\smallskip
    
\noindent $(2')$ \emph{for all $x,y,z\in P$, if $x~R~z$ and $y\leq z$ then $(x\meet y)~R~y$, provided $x\meet y$ exists.}\smallskip
    
\noindent This version coincides with the definition of transfer systems as wide subcategories of a poset category that are closed under pullbacks, which is given in \cite[Definition 2.22]{modelstr} for example (see also \cite[Definition 4.1]{selfdual}). Here the connection is that pullbacks in a poset are exactly given by meets.

If $P$ is a meet semi-lattice, then the two versions of the restriction condition $(2)$ and $(2')$ are equivalent. But in general this is not the case. See the Hasse diagram below for an example:
    \begin{center}
    \begin{tikzpicture}
    \node (a) at (0,0) {$\bullet$};
    \node (b) at (1,0) {$\bullet$};
    \node (c) at (0,1) {$\bullet$};
    \node (d) at (1,1) {$\bullet$};
    \node (e) at (0.5,1.5) {$\bullet$};
    \draw[thick] (d.center)--(a.center)--(c.center)--(e.center)--(d.center)--(b.center)--(c.center);
    \draw (a) node[left]{$m_1$};
    \draw (b) node[right]{$m_2$};
    \draw (c) node[left]{$x$};
    \draw (d) node[right]{$y$};
    \draw (e) node[above]{$\top$};
    \end{tikzpicture}  
    \end{center}
Under condition $(2)$, the relation $x~R~\top$ in a transfer system would imply $m_1~R~y$ and $m_2~R~y$. However under condition $(2')$, $x~R~\top$ does not force any lower relations.

The restriction condition (2) coincides with the definition given in \cite[Definition 2.3]{BMO2}, for example. The authors note \cite[Remark 2.4]{BMO2} that this version should give a bijection between transfer systems and weak factorization systems for any finite poset, which was shown for lattices in \cite{selfdual}. We flesh this out in Section~\ref{sec:op} in Theorem~\ref{thm:wfs}. In this sense, $(2)$ is the ``correct'' definition. Also, as shown in Corollary~\ref{cor:oppable} we have $\Tr(\P^{\op})\cong \Tr(\P)^{\op}$ for any finite poset $\P$. But this isomorphism is false in general if we use $(2')$ instead of $(2)$---the poset given above provides a counterexample. See Remark~\ref{rem:difresag} for more on these differences. In this paper we will always use (2) as the restriction condition.
\end{remark}

For relations on arborescences, satisfying the restriction condition is equivalent to satisfying a simpler condition that can be described as saying ``if you can travel far, then you can travel close''. This is shown below where the dashed line indicates the given relation $\leq$ in $\A$ while the solid lines indicate the relation $R$:

\begin{center}
    \begin{tikzpicture}[scale=0.9]
      \draw[thick] (0,-2)--(0,-1);
      \draw[thick] (0,-2) arc (-60:60:1.15);
      \draw[fill] (0,0) circle (2pt) node[left=10pt] {$z$};
      \draw[fill] (0,-1) circle (2pt) node[left=10pt] {$y$};
      \draw[fill]  (0,-2) circle (2pt) node[left=10pt] {$x$};
      \draw[thick, dashed] (0,-1)--(0,0);
      \draw[thick, double, -{Implies}, rotate=315] (1,-.5) -- (1,-.8);
    \end{tikzpicture}
\end{center}
The next proposition makes this precise. 
\begin{prop}
Let $R$ be a partial order on an arborescence $\A$. Then $R$ is a transfer system if and only if $R$ refines $\leq$ and satisfies
    \begin{center} \emph{(tree restriction condition)}  for all $x,y,z\in \A$, if $x\leq y \leq z \text{ and } x~R~z$, then $x~R~y.$
    \end{center}
\end{prop}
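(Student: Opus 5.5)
Since $R$ is assumed to be a partial order, it is already reflexive and transitive. The proof therefore reduces to showing that, for a relation refining $\leq$ on an arborescence, the restriction condition (2) of Definition~\ref{def:transf} is equivalent to the tree restriction condition. The key input is the preceding proposition. In $\A$, if two elements lie below a common element then they are comparable. Consequently, pseudo-meets in the setting of (2) are easy to identify.

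\emph{Forward direction.} Suppose $R$ is a transfer system and $x\leq y\leq z$ with $x~R~z$. Apply (2) to the triple $(x,y,z)$, which is allowed since $x~R~z$ and $y\leq z$. The elements below both $x$ and $y$ are exactly the elements below $x$, because $x\leq y$. So the unique maximal such element is $x$ itself, and $x$ is the only pseudo-meet of $x$ and $y$. Condition (2) then gives $x~R~y$.

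\emph{Reverse direction.} Suppose $R$ refines $\leq$ and satisfies the tree restriction condition. Let $x~R~z$ and $y\leq z$. Since $R$ refines $\leq$, we have $x\leq z$. As $x\leq z$ and $y\leq z$, the preceding proposition gives $x\leq y$ or $y\leq x$. We compute the pseudo-meets of $x$ and $y$ in each case.
\begin{itemize}
\item If $x\leq y$, the common lower bounds of $x$ and $y$ are the elements below $x$, so $x$ is the unique pseudo-meet. A pseudo-meet $m$ must satisfy $m\leq x$ and is comparable to $x$, since $x$ is a common lower bound; hence $m=x$. Now $x\leq y\leq z$ and $x~R~z$, so the tree condition gives $x~R~y$, as required.
\item If $y\leq x$, the same argument shows that $y$ is the unique pseudo-meet. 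We need $y~R~y$, which holds by reflexivity.
\end{itemize}
Hence (2) holds.

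There is no real obstacle here. The only point needing care is the claim that the pseudo-meet is unique in each case. This follows because any pseudo-meet $m$ of $x$ and $y$ is comparable to the smaller of $x$ and $y$, which is itself a common lower bound, so $m$ must equal it.
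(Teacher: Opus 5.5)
Your proposal is correct and follows essentially the same case analysis as the paper: in both directions the pseudo-meet of $x$ and $y$ is the smaller of the two, and comparability of elements lying below a common element drives the reverse direction. The one difference is how that pseudo-meet is identified: the paper notes that arborescences are meet semi-lattices and switches to the meet version $(2')$ of the restriction condition, while you show directly that the pseudo-meet is unique and equals the smaller element, which proves the special case of the $(2)\Leftrightarrow(2')$ equivalence that the paper takes for granted.
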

\begin{proof}
Note arborescences are meet semi-lattices, so we can use version $(2')$ of the restriction condition instead of $(2)$ to simplify the argument.
    
Let $R$ be a transfer system on $\A$. Then by definition $R$ refines $\leq$. Suppose we have $x\leq y \leq z$ and $x~R~z$. By the restriction condition $(x\meet y)~R~y$, but since $x\leq y$ we have $x\meet y =x$. Thus $x~R~y$.

Now suppose $R$ is a partial order that refines $\leq$ and satisfies the tree restriction condition. We just need to verify $R$ satisfies the usual restriction condition, so suppose we have $x~R~z$ and $y\leq z$. Note $x~R~z$ implies $x\leq z$, so by the definition of arborescence, $x\leq y$ or $y \leq x$. If $x\leq y$ then  $x~R~y$ by the tree restriction condition. Since $x\meet y =x$ we have $(x\meet y)~R~y$. If $y\leq x$ then $y\meet x = y$. The partial order $R$ is reflexive so $y~R~y$, which implies $(x\meet y)~R~y$. Thus $R$ satisfies the restriction condition and is a transfer system on $\A$. 
\end{proof}

We write $\Tr(\P)$ for the set of transfer systems on a poset $\P$. The set $\Tr(\P)$ is also a poset where the ordering is given by refinement, i.e. $R_1\leq R_2$ if and only if $R_1\subseteq R_2$. This poset is actually a lattice: meets are given by intersections, and joins are given by taking the smallest transfer system that contains the union. This is the transfer system generated by the union. See \cite[Construction A.1]{steiner} for more on generating transfer systems. We summarize this result below.

\begin{prop}
Let $\P$ be a finite poset. The set of transfer systems $\Tr(\P)$ is a lattice under refinement.
\end{prop}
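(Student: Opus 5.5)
The plan is to show that $\Tr(\P)$ is a finite poset that has a top element and is closed under arbitrary intersections. A standard fact then makes it a complete lattice: meets are intersections, and the join of $R_1,R_2$ is the intersection of all transfer systems containing $R_1\cup R_2$.

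First, I record the top element. The relation $\leq$ itself is a transfer system. It is reflexive and transitive, and it refines itself. For the restriction condition, suppose $x\leq z$ and $y\leq z$, and let $m$ be a pseudo-meet of $x$ and $y$. By Definition~\ref{def:pseudomeet} we have $m\leq y$, which is exactly $m\leq y$ in the relation $\leq$. Every transfer system is contained in $\leq$ by condition (1), so $\leq$ is the maximum of $\Tr(\P)$. The minimum is the diagonal relation $\{(x,x)\}$, though we do not strictly need it.

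Second, I show that the intersection $R_1\cap R_2$ of two transfer systems is again a transfer system; the same argument works for any family. Reflexivity, transitivity and refinement of $\leq$ each pass to intersections immediately. For the restriction condition, suppose $x\,(R_1\cap R_2)\,z$, $y\leq z$, and $m$ is a pseudo-meet of $x$ and $y$. Then $m\,R_i\,y$ holds for each $i$, because each $R_i$ satisfies (2). The key point is that the notion of pseudo-meet depends only on the ambient order $\leq$ and not on $R$. Hence $m\,(R_1\cap R_2)\,y$. It follows that $R_1\cap R_2$ is the greatest lower bound of $R_1$ and $R_2$ under inclusion.

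Third, for joins I let $\langle R_1\cup R_2\rangle$ be the intersection of all transfer systems containing $R_1\cup R_2$. This family is nonempty because it contains $\leq$, and it is finite since $\P$ is finite. By the second step, the intersection is a transfer system. It contains both $R_1$ and $R_2$, and it is contained in every transfer system that does, so it is their least upper bound. This yields the lattice structure, with meets given by intersections and joins given by the generated transfer system, as claimed in the text.

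No step here is a serious obstacle. The only care needed is to check that condition (2), stated with pseudo-meets, is preserved under intersection, and this holds because the set of pseudo-meets is determined by $\P$ alone.
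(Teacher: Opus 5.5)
Your proposal is correct and takes the same approach as the paper. The paper only asserts that meets in $\Tr(\P)$ are intersections and that joins are the transfer system generated by the union, citing Rubin's Construction A.1. You supply the missing checks: $\leq$ is itself a transfer system, so the generating family is nonempty, and condition (2) survives intersection because pseudo-meets depend only on $\leq$, not on $R$.
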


\begin{example}
Consider the poset $\{0,1,2\}$ with ordering $0<1<2$. One can check there are exactly $5$ transfer systems, and the lattice of transfer systems is shown in Figure~\ref{fig:2ex}.
\begin{figure}[ht]
    \scalebox{0.75}{
    \begin{tikzpicture}[scale=1.1]
    \node (00) at (0,0) {\begin{tikzpicture}[scale=0.5]
        \node (2) at (0,2) {$\bullet$};
        \node (1) at (0,1) {$\bullet$};
        \node (0) at (0,0) {$\bullet$};
        \draw (-0.5,2.3)--(-0.7,2.3)--(-0.7,-0.3)--(-0.5,-0.3);
        \draw (0.5,2.3)--(0.7,2.3)--(0.7,-0.3)--(0.5,-0.3);
    \end{tikzpicture}};
    \node (01) at (-2,1.5) {\begin{tikzpicture}[scale=0.5]
        \node (2) at (0,2) {$\bullet$};
        \node (1) at (0,1) {$\bullet$};
        \node (0) at (0,0) {$\bullet$};
        \draw (0.center)--(1.center);
        \draw (-0.5,2.3)--(-0.7,2.3)--(-0.7,-0.3)--(-0.5,-0.3);
        \draw (0.5,2.3)--(0.7,2.3)--(0.7,-0.3)--(0.5,-0.3);
    \end{tikzpicture}};
    \node (02) at (-2,3.5) {\begin{tikzpicture}[scale=0.5]
        \node (2) at (0,2) {$\bullet$};
        \node (1) at (0,1) {$\bullet$};
        \node (0) at (0,0) {$\bullet$};
        \draw (0.center)--(1.center);
        \draw (0.center) edge[bend left=45] (2.center);
        \draw (-0.5,2.3)--(-0.7,2.3)--(-0.7,-0.3)--(-0.5,-0.3);
        \draw (0.5,2.3)--(0.7,2.3)--(0.7,-0.3)--(0.5,-0.3);
    \end{tikzpicture}};
    \node (11) at (2,2.5) {\begin{tikzpicture}[scale=0.5]
        \node (2) at (0,2) {$\bullet$};
        \node (1) at (0,1) {$\bullet$};
        \node (0) at (0,0) {$\bullet$};
        \draw (1.center)--(2.center);
        \draw (-0.5,2.3)--(-0.7,2.3)--(-0.7,-0.3)--(-0.5,-0.3);
        \draw (0.5,2.3)--(0.7,2.3)--(0.7,-0.3)--(0.5,-0.3);
    \end{tikzpicture}};
    \node (03) at (0,5) {\begin{tikzpicture}[scale=0.5]
        \node (2) at (0,2) {$\bullet$};
        \node (1) at (0,1) {$\bullet$};
        \node (0) at (0,0) {$\bullet$};
        \draw (0.center)--(1.center);
        \draw (1.center)--(2.center);
        \draw (0.center) edge[bend left=45] (2.center);
        \draw (-0.5,2.3)--(-0.7,2.3)--(-0.7,-0.3)--(-0.5,-0.3);
        \draw (0.5,2.3)--(0.7,2.3)--(0.7,-0.3)--(0.5,-0.3);
    \end{tikzpicture}};
    \draw (00)--(01)--(02)--(03)--(11)--(00);
    \end{tikzpicture}}
    \caption{The lattice of transfer systems for $[2]=\{0,1,2\}$.}
    \label{fig:2ex}
\end{figure}
\end{example}

Generalizing the above example, we write $[n]$ for the totally ordered poset $0<1<\cdots< n$. We call a poset a \textbf{chain} if it is isomorphic to $[n]$ for some $n$. In previous work of Balchin, Barnes, and Roitzheim \cite{BBR1}, they enumerated and classified the transfer system lattice for all totally ordered chains. These are counted by the \textbf{Catalan numbers}, which we briefly recall here. We denote the $n$th Catalan number by Cat$(n)$. This sequence appears in a wide variety of places in mathematics. As just some examples, the $n$th Catalan number counts
\begin{itemize}[leftmargin=*]
    \item the number of valid expressions containing $n$ pairs of parentheses that are correctly matched;
    \item the number of functions $f\colon [n-1]\to[n-1]$ that are order-preserving and satisfy $f(k)\leq k$ for all $k$; and
    \item the number of Dyck paths from $(0,0)$ to $(n,n)$, which are North-East lattice paths that don't go above the diagonal line $y=x$. 
\end{itemize}
The Catalan numbers have a closed form
\[
\text{Cat}(n)=\frac{1}{n+1}\binom{2n}{n},
\]
and they also satisfy the recurrence
\[
\text{Cat}(0)=1, \quad \text{Cat}(n)=\sum_{i=1}^{n-1}\text{Cat}(i-1)\text{Cat}(n-i) \quad \text{ for } n\geq 1.
\]
The first handful of Catalan numbers are given by
\[
1,2,5,14,42,132,429,\dots
\]
There is also a nice connection between Catalan numbers and transfer systems on chains:

\begin{thm}{BBR1}
Let $[n]$ denote the totally ordered set $0<1<\dots<n$. Then the number of transfer systems on $[n]$ is $\Cat(n+1)$, and furthermore, the lattice $\Tr([n])$ is isomorphic to the Tamari lattice. 
\end{thm}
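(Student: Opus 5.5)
We will prove both parts of the theorem by attaching to each transfer system $R$ on $[n]$ a single order-preserving function and then reading off the counting and the lattice structure from it.

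\textbf{The function $f_R$.} By the tree restriction condition (a chain is an arborescence), if $i~R~j$ then $i~R~k$ for every $i\le k\le j$. So the set $\{j : i~R~j\}$ is an interval $[i,t_R(i)]$. We encode $R$ by the function $f_R\colon[n]\to[n]$ defined by
\[
f_R(j)=\min\{i : i~R~j\}.
\]
This is the ``furthest you can come from below'' function. It satisfies $f_R(j)\le j$ by reflexivity. For the remaining steps:
\begin{itemize}
\item Since $i~R~j$ forces $i~R~k$ for $k\le j$, the set $\{j: i~R~j\}$ is an interval starting at $i$, and conversely $i~R~j$ holds if and only if $f_R(j)\le i$. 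Showing this converse is where transitivity is needed.
\item We then check that $f_R$ is order-preserving: for $j\le j'$ we want $f_R(j)\le f_R(j')$. Put $m=f_R(j')$. If $m\le j$, then $m~R~j'$ together with restriction gives $m~R~j$, so $f_R(j)\le m$. If $m>j$, then $f_R(j)\le j<m$.
\end{itemize}

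\textbf{The converse fails, which is the main obstacle.} Not every order-preserving $f$ with $f\le\id$ yields a transitive relation via ``$i~R~j$ iff $f(j)\le i\le j$''. Transitivity would require that $f(j)\le i$ and $f(k)\le j$ imply $f(k)\le i$. This is false in general, so this encoding is only injective.

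\textbf{Fix for counting.} We instead use the characterization that $R$ is determined by the covering data together with transitive closure and restriction. The route I would take is a Catalan-type recursion. Let $m$ be the largest element with $0~R~m$. Then the relations split into two pieces:
\begin{itemize}
\item relations inside $\{1,\dots,m\}$, which must be closed under restriction and transitivity;
\item relations inside $\{m+1,\dots,n\}$, with no relation crossing from $\le m$ to $>m$.
\end{itemize}
No crossing relation is possible: if $i~R~k$ with $i\le m<k$, then $0~R~i$ (by restriction from $0~R~m$) and transitivity would give $0~R~k$, contradicting the maximality of $m$.

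One must verify that any pair of transfer systems on the two pieces, together with the relations $0~R~j$ for all $j\le m$, glues to a transfer system on $[n]$. Transitivity across $0$ is automatic, and restriction holds because of the interval structure. This gives
\[
T(n)=\sum_{m=0}^{n} T(m-1)\,T(n-m-1),\qquad T(-1)=1,
\]
which is the Catalan recurrence for $\Cat(n+1)$.

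\textbf{Lattice statement.} For the Tamari isomorphism, the same decomposition matches the standard recursive decomposition of binary trees (equivalently, bracketings of $n+1$ letters):
\begin{itemize}
\item the root split corresponds to $m$;
\item the left and right subtrees correspond to the transfer systems on the two pieces.
\end{itemize}
Recursively, this defines a bijection to binary trees.

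Checking that inclusion of transfer systems corresponds to the Tamari order is the hardest part. I would show that the covering relations match: adding a minimal generating relation corresponds to a single right rotation. I would try to verify this on the generators first, and otherwise cite \cite{BBR1} for this step.
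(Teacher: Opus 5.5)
\textbf{The counting half is correct; the Tamari half is a genuine gap, and the map you propose for it cannot be an isomorphism.}

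\textbf{Counting.} Your count takes a different route from the paper, and it works. Splitting at the largest $m$ with $0\,R\,m$ is sound: restriction gives $0\,R\,j$ for all $j\le m$, your argument rules out relations crossing $m$, and the gluing check is routine because both blocks are intervals. This has the same shape as the $\odot$ decomposition the paper attributes to \cite{BBR1}.

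The paper instead counts the size of the down-set, $\delta_n(R)(k)=\#\{i<k : i\,R\,k\}$. Theorem~\ref{thm:chainmonotone} shows that $\Delta_n=\id-\delta_n$ is a bijection onto the order-preserving $f\le\id$, which are then counted by Dyck paths. Your recursion is quicker for chains. The paper's encoding has the advantage that it pastes along principal ideals to all arborescences (Theorem~\ref{thm:arbij}).

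Your discarded $f_R$ was aiming at the paper's target set, but your claim that $i\,R\,j$ iff $f_R(j)\le i$ is false, because only up-sets are intervals. On $[2]$, both $\{0\to1,0\to2\}$ and the complete system give $f_R=(0,0,0)$, so $f_R$ is not even injective. Recording the size of the down-set rather than its minimum is exactly the paper's repair.

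\textbf{Tamari lattice.} Your recursion sends $\{0\to1\}\in\Tr([2])$ to the balanced tree $(ab)(cd)$, since it has $m=1$ and two singleton blocks. In the Tamari pentagon, that tree is the lone middle element of the short side, comparable only to the top and bottom. In $\Tr([2])$, however, $\{0\to1\}$ lies on the long side, strictly below $\{0\to1,0\to2\}$. So the cover $\{0\to1\}\subset\{0\to1,0\to2\}$ goes to an incomparable pair, whatever left/right convention you use. Hence ``covers correspond to rotations'' fails for this map, and no check on generators can rescue it.

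The paper does not prove this half either. It cites \cite{BBR1} and remarks that $\Delta_n$ only realizes the Stanley order.

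What works is the up-set encoding you wrote down and then dropped, $t_R(i)=\max\{j : i\,R\,j\}$:
\begin{itemize}
\item Since up-sets are the intervals $[i,t_R(i)]$, we have $R\subseteq R'$ iff $t_R\le t_{R'}$ pointwise.
\item Transitivity gives the nesting condition $i\le j\le t_R(i)\Rightarrow t_R(j)\le t_R(i)$.
\item Conversely, every $t$ with $i\le t(i)\le n$ satisfying this condition defines a transfer system via $i\,R\,j\iff i\le j\le t(i)$.
\end{itemize}
These are exactly the Huang--Tamari bracket vectors, and their componentwise order is the Tamari order.

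A minor slip: trees with $n+1$ internal nodes correspond to bracketings of $n+2$ letters, not $n+1$.
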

In their proof, they show the number of transfer systems satisfies the Catalan recurrence by constructing an operation
\[
\odot \colon \Tr([i-1])\times \Tr[(n-i-1)]\to \Tr([n]).
\]
In \cite[Section 5]{selfdual} the authors reprove this result by establishing a bijection between transfer systems on $[n]$ and non-crossing partitions of $\{0,1,\dots, n\}$. In this paper, we provide two other proofs, one using the bijection with order-preserving functions (Corollary~\ref{cor:catalan}) and another using the recurrence relation from Theorem~\ref{thm:recur} and the Catalan triangle (Example~\ref{ex:catalan}).

\section{Ideals and pasting transfer systems}\label{sec:ideals}

Our approach to analyzing transfer systems on trees will be to paste together relations on subposets to get a transfer system on the entire poset. We prove a general pasting process for transfer systems on posets (not just trees). In order for our pasting to work, we need to consider posets that are ideals:

\begin{defn}
Let \(\P\) be a poset. A subset $I\subseteq \P$ is an (order) \textbf{ideal} of \(\P\) if it is downwards-closed (that is, for all $s,t\in \P$, if \(s\in I\) and \(t\leq s\) then \(t\in I\)).
\end{defn}

Observe $\P$ and $\emptyset$ are always ideals of $\P$. Similar to ideals in rings, we also have a notion of ``ideal generated by an element(s)''.

\begin{defn}
Suppose $\P$ is a finite poset and $S$ is a subset of $\P$. The \textbf{ideal generated by $S$} is given by \[\langle S\rangle := \{t\in P \mid t \leq s \text{ for some } s\in S\}.\]
If $S=\{x_1,\dots, x_n\}$ then we'll simply write $\langle S \rangle$ as $\langle x_1,\dots, x_n\rangle$. We say $I$ is \textbf{principal} if there exists an element $s\in \P$ such that $I=\langle s \rangle$.
\end{defn}

\begin{example}
Consider the poset $Y$ whose Hasse diagram is shown below.
\begin{center}
    \begin{tikzpicture}[scale=0.7]
        \node (12) at (-1,2) {$\bullet$};
        \node (22) at (1,2) {$\bullet$};
        \node (11) at (0,1) {$\bullet$};
        \node (00) at (0,0) {$\bullet$};
        \draw[thick] (12.center)--(11.center)--(22.center);
        \draw[thick] (00.center)--(11.center);
    \end{tikzpicture}
\end{center}
The proper ideals of $Y$ consist of two copies of the three element chain, one copy of the two element chain, the singleton, and the empty set.
\end{example}

\begin{defn}
Let $I$ be an ideal of $\P$ and suppose $R\in \Tr(\P)$. The \textbf{restriction} of $R$ to $I$ is the relation $\rst{\P}{I}{R}$ (also denoted $R|_I$) on $I$ given by $x~(R|_I)~y$ if and only if $x~R~y$ for $x,y\in I$. 
\end{defn}
The following proposition is straightforward from the definitions.
\begin{prop}
  If \(R\in \Tr(\P)\) and \(I\) is an ideal of \(P\), then \(\rst{\P}{I}{R}\) is a transfer system on \(I\).
\end{prop}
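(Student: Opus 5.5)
We verify the axioms of Definition~\ref{def:transf} for $R|_I$ one at a time; all but the restriction condition are immediate. Reflexivity holds because for $x\in I$ we have $x~R~x$, hence $x~(R|_I)~x$. For transitivity, if $x,y,w\in I$ with $x~(R|_I)~y$ and $y~(R|_I)~w$, then $x~R~y$ and $y~R~w$, so $x~R~w$ by transitivity of $R$, and therefore $x~(R|_I)~w$. For refinement, $x~(R|_I)~y$ means $x~R~y$, which gives $x\leq y$ in $\P$; since $I$ carries the induced order, $x\leq y$ in $I$ as well.

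The only point needing care is the restriction condition, because pseudo-meets are defined relative to the ambient poset. Suppose $x,y,z\in I$ with $x~(R|_I)~z$ and $y\leq z$, and let $m$ be a pseudo-meet of $x$ and $y$ computed in $I$. We must show $m~(R|_I)~y$. The plan is to show that $m$ is also a pseudo-meet of $x$ and $y$ computed in $\P$, and then apply the restriction condition for $R$ in $\P$.

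First, $m\leq x$ and $m\leq y$ hold in $\P$. Next, take $w\in\P$ with $w\leq x$, $w\leq y$, and $w$ comparable to $m$. Since $w\leq x\in I$ and $I$ is downward closed, $w\in I$. Now the pseudo-meet property of $m$ inside $I$ applies to $w$ and gives $w\leq m$. Hence $m$ is a pseudo-meet of $x$ and $y$ in $\P$.

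The restriction condition for $R$ then yields $m~R~y$, and since $m,y\in I$, we conclude $m~(R|_I)~y$. The downward closedness of $I$ is precisely what makes the pseudo-meets computed in $I$ agree with those computed in $\P$.
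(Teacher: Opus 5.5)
Your proof is correct. The paper gives no written proof, only noting that the result is straightforward from the definitions; your argument is exactly that verification, and it correctly handles the one subtle point: because $I$ is downward closed, a pseudo-meet of $x$ and $y$ computed in $I$ is also a pseudo-meet computed in $\P$.
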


Given ideals \(I, J\) of \(\P\) and transfer systems \(R\in \Tr(I)\) and \(S\in \Tr(J)\), we'd like to know when \(R\) and \(S\) can be pasted together to get a transfer system \(T\in \Tr(I\cup J)\). Of course this is only possible when \(R\) and \(S\) agree on the intersection \(I\cap J\). We prove that this agreement is a sufficient condition and that \(T\) is unique.

\begin{lemma}
\label{lem:icupj}
If \(R\in \Tr(I)\), \(S\in \Tr(J)\), and \(\rst{I}{I\cap J}{R} = \rst{J}{I\cap J}{S}\), then there exists unique \(T\in \Tr(I\cup J)\) such that \(\rst{I\cup J}{I}{T} = R\) and \(\rst{I\cup J}{J}{T} = S\).
\end{lemma}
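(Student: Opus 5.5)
The plan is to take $T$ to be the union $R\cup S$, viewed as a relation on $I\cup J$. Concretely, $x~T~y$ if and only if $x~R~y$ (with $x,y\in I$) or $x~S~y$ (with $x,y\in J$). The whole argument rests on one observation, which I will use repeatedly. If $x\leq y$ and $y\in I$, then $x\in I$, since $I$ is downward closed; the same holds for $J$. As a consequence, any relation $x~T~y$ with $y\in I\cap J$ has both endpoints in $I\cap J$. Such a relation therefore lies in both $R$ and $S$, by the hypothesis $\rst{I}{I\cap J}{R}=\rst{J}{I\cap J}{S}$.

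\emph{Uniqueness.} Suppose $T'\in\Tr(I\cup J)$ restricts to $R$ on $I$ and to $S$ on $J$. Then $T'\supseteq R\cup S$. Conversely, if $x~T'~y$, then $y$ lies in $I$ or in $J$, say $I$. Since $x\leq y$, the observation gives $x\in I$, and so $x~R~y$. Hence $T'=R\cup S$.

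\emph{Restrictions and easy axioms.} By the same observation, $T|_I=R$: a pair in $I$ related by $S$ lies in $I\cap J$ and so is already related by $R$. Likewise $T|_J=S$. Reflexivity and refinement of $\leq$ are immediate from the corresponding properties of $R$ and $S$.

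\emph{Transitivity.} Suppose $x~T~y~T~z$, and without loss of generality $z\in I$. Then $y\in I$ and $x\in I$, because both are $\leq z$. Each of the two relations is given by $R$ or by $S$. If one is given by $S$, both of its endpoints lie in $I\cap J$, so it is also an $R$-relation. Thus $x~R~y~R~z$, and transitivity of $R$ gives $x~R~z$.

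\emph{Restriction condition.} This is the step needing the most care. Suppose $x~T~z$, $y\leq z$, and $m$ is a pseudo-meet of $x$ and $y$ in $I\cup J$. Again assume $z\in I$. Then $x$, $y$, $m$ and every common lower bound of $x$ and $y$ in $I\cup J$ lie in $I$. So the set of common lower bounds, and the comparability relations among them, are the same whether computed in $I$ or in $I\cup J$. Hence $m$ is also a pseudo-meet of $x$ and $y$ in $I$. As before, $x~R~z$, so the restriction condition for $R$ gives $m~R~y$, and therefore $m~T~y$.
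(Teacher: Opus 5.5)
Your proof is correct and follows essentially the same route as the paper. You take $T=R\cup S$ and use downward closure of $I$ and $J$ to reduce the restrictions, transitivity, the restriction condition, and uniqueness to the corresponding properties of $R$ or $S$; the only differences are organizational (you normalize by putting the top element $z$ in $I$, whereas the paper handles the mixed case $x,y\in I$, $y,z\in J$ in transitivity), plus your explicit check that a pseudo-meet of $x$ and $y$ in $I\cup J$ is also one in $I$, a point the paper leaves implicit.
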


\begin{proof}
Fix \(R\in \Tr(I)\) and \(S\in \Tr(J)\) such that \(\rst{I}{I\cap J}{R} = \rst{J}{I\cap J}{S}\). Define \(T\) as the relation on \(I\cup J\) given by $T=R\cup S$ (here we are viewing $R$ and $S$ as subsets of $(I\cup J)\times (I\cup J)$). Explicitly for $x, y\in I\cup J$, we have that $x~T~y$ if and only if either $x,y\in I$ and $x~R~y$ or $x,y\in J$ and $x~S~y$. Since $R$ and $S$ agree on $I\cap J$, we have that $T|_I=R$ and $T|_J=S$. We next verify $T$ is a transfer system on $I\cup J$.

First suppose $x~T~y$. Then $x,y\in I$ and $x~R~y$ or $x,y\in J$ and $x~S~y$. In either case $x\leq y$ because $R$ and $S$ are transfer systems on $I$ and $J$, respectively. Thus $T$ refines $\leq$. Reflexivity is proven similarly.
  
For transitivity, suppose \(x~T~y\) and \(y~T~z\). In the case that $x,y,z\in I$ or $x,y,z\in J$, we get that $x~T~z$ by the transitivity of $R$ or $S$, respectively. So assume, without loss of generality, that \(x,y\in I\) and \(y,z\in J\) (so we have $x~R~y$ and $y~S~z$). Then we get \(x\leq y\leq z\) since \(R,S\) are refinements of $\leq$. Since \(J\) is an ideal, this means \(x\in J\) as well. The relations $R$ and $S$ agree on $I\cap J$, so $x~R~y$ implies $x~S~y$. Finally transitivity of $S$ gives $x~T~z$.

To check the restriction condition, suppose \(x~T~z\) and \(y\in I \cup J\) satisfies \(y \leq z\). Consider the case that \(x,z\in I\) so $x~R~z$. Then \(y\in I\) because $I$ is an ideal and so \(m~R~y\) for any pseudo-meet $m$ which implies \(m~T~y\). A similar argument works if \(x,z\in J\). We have thus shown that \(T\) is a transfer system on \(I\cup J\). 
  
Finally for uniqueness fix a transfer system \(T'\in \Tr(I\cup J)\) such that \(\rst{I\cup J}{I}{T'} = R\) and \(\rst{I\cup J}{J}{T'} = S\). Suppose $x,y\in I\cup J$ such that $x~T'~y$. Since $T'$ is a transfer system, $x\leq y$. Without loss of generality suppose $y\in I$. Then $x\in I$ because $I$ is an ideal. Since $T'|_I=R$ we know $x~R~y$ which implies $x~T~y$. Thus $T'\subseteq T$ and we can similarly show $T\subseteq T'$. We conclude $T=T'$. 
\end{proof}

We can generalize to the setting where we have multiple ideals and transfer systems, as long as the transfer systems agree on the pairwise intersections.

\begin{thm}\label{thm:pasting}
Suppose $I_1, \dots, I_n$ are ideals of a poset $\P$ and that we have a transfer system $R_j$ on $I_j$ for each $1\leq j \leq n$. If the ideals agree on all pairwise intersections (that is, if $R_j|_{I_j\cap I_k} = R_k|_{I_j \cap I_k}$ for all $1\leq j < k \leq n$), then there exists a unique transfer system $T$ on $I_1\cup \dots \cup I_n$ such that $T|_{I_j}=R_{j}$ for all $j$.
\end{thm}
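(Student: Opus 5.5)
I will prove this by induction on $n$, using Lemma~\ref{lem:icupj} as the inductive step. The case $n=1$ is trivial: take $T=R_1$, which is the only possible choice. Note also that a union of ideals is again an ideal, and an intersection of ideals is again an ideal. So every set appearing below is an ideal of $\P$, and restriction along ideals makes sense.

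For the inductive step, assume the statement for $n-1$ ideals. Apply it to $I_1,\dots,I_{n-1}$ to get a unique transfer system $T'$ on $J:=I_1\cup\dots\cup I_{n-1}$ with $T'|_{I_j}=R_j$ for $j<n$. I then want to apply Lemma~\ref{lem:icupj} to the ideals $J$ and $I_n$ of $J\cup I_n$ with transfer systems $T'$ and $R_n$. For this I must check that $T'|_{J\cap I_n}=R_n|_{J\cap I_n}$.

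The main point is this compatibility check. Take $x,y\in J\cap I_n$. Then $y\in I_j$ for some $j<n$. If $x~T'~y$ or $x~R_n~y$, then $x\le y$, so $x\in I_j$ because $I_j$ is downward closed. Hence $x,y\in I_j\cap I_n$. In that case $x~T'~y$ holds iff $x~R_j~y$ (since $T'|_{I_j}=R_j$), iff $x~R_n~y$ (by the pairwise agreement hypothesis). This argument covers both inclusions, because either relation forces $x\le y$. The same reasoning, with the reflexive pairs handled trivially, shows the two restrictions coincide on $J\cap I_n$.

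Lemma~\ref{lem:icupj} now gives a unique $T$ on $J\cup I_n=I_1\cup\dots\cup I_n$ with $T|_J=T'$ and $T|_{I_n}=R_n$. Then $T|_{I_j}=(T|_J)|_{I_j}=T'|_{I_j}=R_j$ for $j<n$, since restriction is transitive along nested ideals.

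For uniqueness, suppose $T''$ restricts to each $R_j$. Then $T''|_J$ restricts to $R_j$ for all $j<n$, so $T''|_J=T'$ by the inductive uniqueness. Also $T''|_{I_n}=R_n$, so $T''=T$ by the uniqueness in Lemma~\ref{lem:icupj}.
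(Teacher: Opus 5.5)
Your proof is correct and takes the same approach as the paper, which says only that the theorem follows by induction on $n$ from Lemma~\ref{lem:icupj}. Your explicit check that $T'|_{J\cap I_n}=R_n|_{J\cap I_n}$ is correct: a related pair $x\le y$ with $y\in I_j$ forces $x\in I_j$, so the pairwise hypothesis on $I_j\cap I_n$ applies. This is the detail the paper leaves implicit.
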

\begin{proof}
This follows from induction and Lemma~\ref{lem:icupj}. 
\end{proof}

\section{A bijection with order-preserving functions} \label{sec:bijmono}
In this section we establish a bijection between order-preserving functions and transfer systems on arborescences. We start by investigating chains and then use Theorem~\ref{thm:pasting} to generalize to trees. 

\begin{defn}
Let \(R\in\Tr([n])\). Define a function \(\delta_n(R) \colon [n]\to \N\) by
\[ k\mapsto \#\{ i\in [n] \colon i~R~k\} - 1 = \#\{i\in [k-1]\colon r~R~k\}.\]
We call this the \textbf{down-labeling} of $[n]$ associated to $R$. We thus have a function \(\delta_n\colon \Tr([n])\to \N^{[n]}\), \(R\mapsto \delta_n(R).\)
\end{defn}
Observe $\delta_n(R)(k)$ is the number of non-reflexive edges flowing into vertex $k$ when we draw the graph with edges for all of the relation $R$ (though we will always draw graphs without the reflexive loops). Figure~\ref{fig:2dl} shows the five transfer systems on $[2]$ as well as their down-labelings. 
\begin{figure}[ht]
    \scalebox{0.75}{\begin{tikzpicture}
    \node (00) at (0,0) {\begin{tikzpicture}[scale=0.5]
        \node (2) at (0,2) {$\bullet$};
        \node (1) at (0,1) {$\bullet$};
        \node (0) at (0,0) {$\bullet$};
        \draw (-0.5,2.3)--(-0.7,2.3)--(-0.7,-0.3)--(-0.5,-0.3);
        \draw (0.5,2.3)--(0.7,2.3)--(0.7,-0.3)--(0.5,-0.3);
    \end{tikzpicture}};
    \node (01) at (-2,1.5) {\begin{tikzpicture}[scale=0.5]
        \node (2) at (0,2) {$\bullet$};
        \node (1) at (0,1) {$\bullet$};
        \node (0) at (0,0) {$\bullet$};
        \draw (0.center)--(1.center);
        \draw (-0.5,2.3)--(-0.7,2.3)--(-0.7,-0.3)--(-0.5,-0.3);
        \draw (0.5,2.3)--(0.7,2.3)--(0.7,-0.3)--(0.5,-0.3);
    \end{tikzpicture}};
    \node (02) at (-2,3.5) {\begin{tikzpicture}[scale=0.5]
        \node (2) at (0,2) {$\bullet$};
        \node (1) at (0,1) {$\bullet$};
        \node (0) at (0,0) {$\bullet$};
        \draw (0.center)--(1.center);
        \draw (0.center) edge[bend left=45] (2.center);
        \draw (-0.5,2.3)--(-0.7,2.3)--(-0.7,-0.3)--(-0.5,-0.3);
        \draw (0.5,2.3)--(0.7,2.3)--(0.7,-0.3)--(0.5,-0.3);
    \end{tikzpicture}};
    \node (11) at (2,2.5) {\begin{tikzpicture}[scale=0.5]
        \node (2) at (0,2) {$\bullet$};
        \node (1) at (0,1) {$\bullet$};
        \node (0) at (0,0) {$\bullet$};
        \draw (1.center)--(2.center);
        \draw (-0.5,2.3)--(-0.7,2.3)--(-0.7,-0.3)--(-0.5,-0.3);
        \draw (0.5,2.3)--(0.7,2.3)--(0.7,-0.3)--(0.5,-0.3);
    \end{tikzpicture}};
    \node (03) at (0,5) {\begin{tikzpicture}[scale=0.5]
        \node (2) at (0,2) {$\bullet$};
        \node (1) at (0,1) {$\bullet$};
        \node (0) at (0,0) {$\bullet$};
        \draw (0.center)--(1.center);
        \draw (1.center)--(2.center);
        \draw (0.center) edge[bend left=45] (2.center);
        \draw (-0.5,2.3)--(-0.7,2.3)--(-0.7,-0.3)--(-0.5,-0.3);
        \draw (0.5,2.3)--(0.7,2.3)--(0.7,-0.3)--(0.5,-0.3);
    \end{tikzpicture}};
    \draw (00)--(01)--(02)--(03)--(11)--(00);
    \draw[->] (3,2.5)--(4,2.5);
    \draw (3.5,2) node{$\delta_2$};
    \end{tikzpicture}\hspace{0.2in}
    \begin{tikzpicture}
    \node (00) at (0,0) {\begin{tikzpicture}[scale=0.5]
        \node (2) at (0,2) {$0$};
        \node (1) at (0,1) {$0$};
        \node (0) at (0,0) {$0$};
        \draw (-0.5,2.3)--(-0.7,2.3)--(-0.7,-0.3)--(-0.5,-0.3);
        \draw (0.5,2.3)--(0.7,2.3)--(0.7,-0.3)--(0.5,-0.3);
    \end{tikzpicture}};
    \node (01) at (-2,1.5) {\begin{tikzpicture}[scale=0.5]
        \node (2) at (0,2) {$0$};
        \node (1) at (0,1) {$1$};
        \node (0) at (0,0) {$0$};
        \draw (-0.5,2.3)--(-0.7,2.3)--(-0.7,-0.3)--(-0.5,-0.3);
        \draw (0.5,2.3)--(0.7,2.3)--(0.7,-0.3)--(0.5,-0.3);
    \end{tikzpicture}};
    \node (02) at (-2,3.5) {\begin{tikzpicture}[scale=0.5]
        \node (2) at (0,2) {$1$};
        \node (1) at (0,1) {$1$};
        \node (0) at (0,0) {$0$};
        \draw (-0.5,2.3)--(-0.7,2.3)--(-0.7,-0.3)--(-0.5,-0.3);
        \draw (0.5,2.3)--(0.7,2.3)--(0.7,-0.3)--(0.5,-0.3);
    \end{tikzpicture}};
    \node (11) at (2,2.5) {\begin{tikzpicture}[scale=0.5]
        \node (2) at (0,2) {$1$};
        \node (1) at (0,1) {$0$};
        \node (0) at (0,0) {$0$};
        \draw (-0.5,2.3)--(-0.7,2.3)--(-0.7,-0.3)--(-0.5,-0.3);
        \draw (0.5,2.3)--(0.7,2.3)--(0.7,-0.3)--(0.5,-0.3);
    \end{tikzpicture}};
    \node (03) at (0,5) {\begin{tikzpicture}[scale=0.5]
        \node (2) at (0,2) {$2$};
        \node (1) at (0,1) {$1$};
        \node (0) at (0,0) {$0$};
        \draw (-0.5,2.3)--(-0.7,2.3)--(-0.7,-0.3)--(-0.5,-0.3);
        \draw (0.5,2.3)--(0.7,2.3)--(0.7,-0.3)--(0.5,-0.3);
    \end{tikzpicture}};
    \end{tikzpicture}}
    \caption{The down-labelings for the transfer systems on $[2]=\{0,1,2\}$.}
    \label{fig:2dl}
\end{figure}
Note $\delta_n(R)$ is neither order-preserving nor order-reversing in general, but we will use it to construct an order-preserving function shortly. First we show $\delta_n$ can be used to distinguish between two different transfer systems.
\begin{prop}\label{prop:dlisinj}
The function $\delta_n\colon \Tr([n])\to \N^{[n]}$ is injective. That is, two transfer systems have the same down-labeling if and only if they are equal. 
\end{prop}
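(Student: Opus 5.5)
The approach is to show that on a chain, the set of elements that $R$-relate to a given $k$ is an interval ending at $k$. Once that is known, the count $\delta_n(R)(k)$ determines this set completely, and hence determines $R$.

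\textbf{Step 1 (interval structure).} Fix $R\in\Tr([n])$ and $k\in[n]$. I claim that
\[
\{i\in[n] : i~R~k\}=\{k-\delta_n(R)(k),\dots,k\}.
\]
Suppose $i~R~k$ and $i\leq j\leq k$. A chain is an arborescence, so the tree restriction condition applies and gives $i~R~j$. By itself this only constrains relations into $j$, so it does not yet give $j~R~k$. I would instead use the restriction condition (2) directly: from $i~R~k$ and $j\leq k$, the pseudo-meet of $i$ and $j$ is $\min(i,j)=i$, so we get $i~R~j$ again. This still does not yield $j~R~k$.

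The plan therefore switches to describing the set $D_k=\{i : i~R~k\}$ differently, since $D_k$ need not be an interval. For example, $0~R~2$ together with $0~R~1$ gives the down-labeling $(0,1,1)$ without $1~R~2$. What I will use instead is that $D_k$ is determined by its minimum. The key claim is:
\[
\text{if } i~R~k \text{ and } i<i'<k, \text{ then either } i'~R~k \text{ or } \ldots
\]
Rather than pursue this, I would argue uniqueness by induction on $k$, reconstructing $D_k$ from $\delta$ and the already-determined relation on $[k-1]$.

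\textbf{Step 2 (induction).} Suppose $R$ and $R'$ have the same down-labeling. By induction on $k$, I show that $R|_{[k]}=R'|_{[k]}$. Note that $[k]$ is an ideal, so these restrictions are transfer systems. The base case $k=0$ is trivial. For the inductive step, the relations among elements $<k$ agree, and it remains to show $D_k=D'_k$.

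Let $m=\min D_k$. Then $m~R~k$, and the tree restriction condition gives $m~R~j$ for all $m\leq j\leq k$. Transitivity shows that if $m\le j<k$ and $j~R~k$, then nothing new is forced. Conversely, I claim
\[
D_k=\{m\}\cup\{j : m<j<k,\ j\notin \text{(elements reachable from } m \text{ only through } \ldots)\}.
\]
This is getting complicated, so the cleaner route is as follows. Suppose $D_k\neq D'_k$, and let $i$ be the largest element lying in exactly one of them, say $i\in D_k\setminus D'_k$. Since $|D_k|=|D'_k|$, there is some $i'<i$ with $i'\in D'_k\setminus D_k$. Then $i'~R'~k$, and restriction gives $i'~R'~i$. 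By the induction hypothesis this means $i'~R~i$. Combined with $i~R~k$, transitivity gives $i'~R~k$, which contradicts $i'\notin D_k$.

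\textbf{Main obstacle.} The main obstacle is finding this extremal-element argument, since $D_k$ is not an interval. The counting step $|D_k|=|D'_k|$ together with the choice of the largest discrepancy is what makes the proof work. The converse direction of the proposition, that equal transfer systems have equal down-labelings, is trivial.
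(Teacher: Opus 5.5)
Your final ``cleaner route'' is correct and is essentially the paper's argument. Both proofs use $|D_k|=|D'_k|$ to produce discrepant elements on each side and arrange that the smaller one $i'$ lies in $D'_k$. Both then combine the restriction condition, the inductive hypothesis on the lower relations, and transitivity with $i~R~k$ to reach the contradiction $i'~R~k$. The differences are cosmetic: you choose $i$ as the largest element of the symmetric difference where the paper says ``without loss of generality,'' and you induct on $k$ inside a fixed $[n]$ rather than on $n$.

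In a write-up you should delete the false starts. In particular, the assertion that $D_k$ is determined by its minimum is false. On $[3]$ with $R|_{[2]}$ complete, both $D_3=\{0,3\}$ and $D_3=\{0,1,3\}$ give valid transfer systems with the same minimum.
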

\begin{proof}
We proceed by induction on $n$. First if $n=0$ then $\Tr([0])$ is a singleton so the result is immediate. Now fix $n\geq 0$ and suppose $\delta_n$ is injective. Our goal is to show $\delta_{n+1}$ is injective. Let $R,S\in \Tr([n+1])$ with $\delta_{n+1}(R)=\delta_{n+1}(S)$, and suppose to the contrary $R\neq S$. It is straightforward to check that then $\delta_n(R|_{[n]})=\delta_n(S|_{[n]})$, so by the inductive hypothesis we know $R|_{[n]} = S|_{[n]}$. Thus these relations only differ when considering which elements are related to the maximal element $n+1$.
    
Since $\delta_{n+1}(R) = \delta_{n+1}(S)$, we know the number of elements related to $n+1$ under $R$ is equal to the number related to $n+1$ under $S$. Thus $R\neq S$ implies we can fix $0 \leq k \leq n$ such that $k~R~(n+1)$ is true but $k~{S}~(n+1)$ is false, and we can fix $1\leq j \leq n$ such that $j~R~(n+1)$ is false but $j~{S}~(n+1)$ is true. Suppose without loss of generality that $j<k$. We produce a contradiction by showing $j~R~(n+1)$.
    
By the restriction condition, $j~S~(n+1)$ implies $j~S~n$ and since $R|_{[n]}=S|_{[n]}$ we also learn that $j~R~n$. Applying the restriction condition again we learn $j~R~k$. But now by transitivity, $j~R~k$ and $k~R~(n+1)$ implies $j~R~(n+1)$. 
\end{proof}

We now use $\delta_n(R)$ to define an order-preserving function $[n]\to \N$.

\begin{defn}
Let \(R\in\Tr([n])\). Define $\Delta_n(R)\colon [n]\to [n]$ to be the function defined by $k\mapsto k-\delta_n(R)(k)$. 
\end{defn}
Note $\Delta_n(R)$ is the zero function if and only if $R$ is the complete transfer system given by $R=~\leq$. Thus we can think of $\Delta_n(R)$ as measuring how far $R$ is from being complete. As an example we apply $\Delta_2$ to the five transfer systems in Figure~\ref{fig:2delta}. The arrows in the right diagram indicate the partial order on order-preserving functions: we say $f\leq g$ if and only if $f(k)\leq g(k)$ for all $k\in [n]$. As suggested by the figure, we will see that $\Delta_n$ is an order-reversing bijection to a family of functions. But it does not provide a poset isomorphism. Explicitly, there might be relations between functions  
that do not exist between the corresponding transfer systems. 
\begin{figure}[ht]
    \scalebox{0.75}{
    \begin{tikzpicture}[scale=1.1]
    \node (00) at (0,0) {\begin{tikzpicture}[scale=0.5]
        \node (2) at (0,2) {$\bullet$};
        \node (1) at (0,1) {$\bullet$};
        \node (0) at (0,0) {$\bullet$};
        \draw (-0.5,2.3)--(-0.7,2.3)--(-0.7,-0.3)--(-0.5,-0.3);
        \draw (0.5,2.3)--(0.7,2.3)--(0.7,-0.3)--(0.5,-0.3);
    \end{tikzpicture}};
    \node (01) at (-2,1.5) {\begin{tikzpicture}[scale=0.5]
        \node (2) at (0,2) {$\bullet$};
        \node (1) at (0,1) {$\bullet$};
        \node (0) at (0,0) {$\bullet$};
        \draw (0.center)--(1.center);
        \draw (-0.5,2.3)--(-0.7,2.3)--(-0.7,-0.3)--(-0.5,-0.3);
        \draw (0.5,2.3)--(0.7,2.3)--(0.7,-0.3)--(0.5,-0.3);
    \end{tikzpicture}};
    \node (02) at (-2,3.5) {\begin{tikzpicture}[scale=0.5]
        \node (2) at (0,2) {$\bullet$};
        \node (1) at (0,1) {$\bullet$};
        \node (0) at (0,0) {$\bullet$};
        \draw (0.center)--(1.center);
        \draw (0.center) edge[bend left=45] (2.center);
        \draw (-0.5,2.3)--(-0.7,2.3)--(-0.7,-0.3)--(-0.5,-0.3);
        \draw (0.5,2.3)--(0.7,2.3)--(0.7,-0.3)--(0.5,-0.3);
    \end{tikzpicture}};
    \node (11) at (2,2.5) {\begin{tikzpicture}[scale=0.5]
        \node (2) at (0,2) {$\bullet$};
        \node (1) at (0,1) {$\bullet$};
        \node (0) at (0,0) {$\bullet$};
        \draw (1.center)--(2.center);
        \draw (-0.5,2.3)--(-0.7,2.3)--(-0.7,-0.3)--(-0.5,-0.3);
        \draw (0.5,2.3)--(0.7,2.3)--(0.7,-0.3)--(0.5,-0.3);
    \end{tikzpicture}};
    \node (03) at (0,5) {\begin{tikzpicture}[scale=0.5]
        \node (2) at (0,2) {$\bullet$};
        \node (1) at (0,1) {$\bullet$};
        \node (0) at (0,0) {$\bullet$};
        \draw (0.center)--(1.center);
        \draw (1.center)--(2.center);
        \draw (0.center) edge[bend left=45] (2.center);
        \draw (-0.5,2.3)--(-0.7,2.3)--(-0.7,-0.3)--(-0.5,-0.3);
        \draw (0.5,2.3)--(0.7,2.3)--(0.7,-0.3)--(0.5,-0.3);
    \end{tikzpicture}};
    \draw (00)--(01)--(02)--(03)--(11)--(00);
    \draw[->] (3,2.5)--(4,2.5);
    \draw (3.5,2) node{$\Delta_2$};
    \end{tikzpicture}\hspace{0.2in}
    \begin{tikzpicture}[scale=1.1]
    \node (00) at (0,0) {\begin{tikzpicture}[scale=0.5]
        \node (2) at (0,2) {$2$};
        \node (1) at (0,1) {$1$};
        \node (0) at (0,0) {$0$};
        \draw (-0.5,2.3)--(-0.7,2.3)--(-0.7,-0.3)--(-0.5,-0.3);
        \draw (0.5,2.3)--(0.7,2.3)--(0.7,-0.3)--(0.5,-0.3);
    \end{tikzpicture}};
    \node (01) at (-2,1.5) {\begin{tikzpicture}[scale=0.5]
        \node (2) at (0,2) {$2$};
        \node (1) at (0,1) {$0$};
        \node (0) at (0,0) {$0$};
        \draw (-0.5,2.3)--(-0.7,2.3)--(-0.7,-0.3)--(-0.5,-0.3);
        \draw (0.5,2.3)--(0.7,2.3)--(0.7,-0.3)--(0.5,-0.3);
    \end{tikzpicture}};
    \node (02) at (-2,3.5) {\begin{tikzpicture}[scale=0.5]
        \node (2) at (0,2) {$1$};
        \node (1) at (0,1) {$0$};
        \node (0) at (0,0) {$0$};
        \draw (-0.5,2.3)--(-0.7,2.3)--(-0.7,-0.3)--(-0.5,-0.3);
        \draw (0.5,2.3)--(0.7,2.3)--(0.7,-0.3)--(0.5,-0.3);
    \end{tikzpicture}};
    \node (11) at (2,2.5) {\begin{tikzpicture}[scale=0.5]
        \node (2) at (0,2) {$1$};
        \node (1) at (0,1) {$1$};
        \node (0) at (0,0) {$0$};
        \draw (-0.5,2.3)--(-0.7,2.3)--(-0.7,-0.3)--(-0.5,-0.3);
        \draw (0.5,2.3)--(0.7,2.3)--(0.7,-0.3)--(0.5,-0.3);
    \end{tikzpicture}};
    \node (03) at (0,5) {\begin{tikzpicture}[scale=0.5]
        \node (2) at (0,2) {$0$};
        \node (1) at (0,1) {$0$};
        \node (0) at (0,0) {$0$};
        \draw (-0.5,2.3)--(-0.7,2.3)--(-0.7,-0.3)--(-0.5,-0.3);
        \draw (0.5,2.3)--(0.7,2.3)--(0.7,-0.3)--(0.5,-0.3);
    \end{tikzpicture}};
    \draw[->] (01)--(00);
    \draw[->] (02)--(01);
    \draw[->] (03)--(02);
    \draw[->] (11)--(00);
    \draw[->] (03)--(11);
    \draw[dashed, ->] (02)--(11);
    \end{tikzpicture}}
    \caption{Applying $\Delta_2$ to the five transfer systems on $[2]$.}
    \label{fig:2delta}
\end{figure}

We have the following key properties of $\delta_n(R)$ and $\Delta_n(R)$.

\begin{prop}\label{pr:dlchains}
Let $R\in\Tr([n])$. Then $\delta_n(R)(0)=0$ and \[\delta_n(R)(k+1)\leq \delta_n(R)(k)+1\] for all $0\leq k \leq n-1$. Thus the function $\Delta_n(R)\colon [n]\to [n]$ is order-preserving (i.e. $j\leq k$ implies $\Delta_n(R)(j)\leq \Delta_n(R)(k)$) and satisfies $\Delta_n(R)(k)\leq k$ for all $k$. 
\end{prop}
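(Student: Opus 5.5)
The plan is to read off both claims from the tree restriction condition. For a chain $[n]$ this says: if $i \le j \le k$ and $i~R~k$, then $i~R~j$.

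First, $\delta_n(R)(0)=0$ is immediate. Since $R$ refines $\leq$, the only $i\in[n]$ with $i~R~0$ is $i=0$, so $\#\{i : i~R~0\}-1=0$.

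For the inequality, fix $0\le k\le n-1$ and set
\[ D(k)=\{i\in[n] : i<k,\ i~R~k\}, \qquad D(k+1)=\{i\in[n] : i<k+1,\ i~R~(k+1)\}. \]
Then $\delta_n(R)(k)=\#D(k)$ and $\delta_n(R)(k+1)=\#D(k+1)$. I will show that $D(k+1)\setminus\{k\}\subseteq D(k)$. Take $i\in D(k+1)$ with $i\neq k$. Then $i<k<k+1$ and $i~R~(k+1)$. The tree restriction condition, applied with $x=i$, $y=k$, $z=k+1$, gives $i~R~k$, so $i\in D(k)$. Hence
\[ \#D(k+1)\le \#D(k)+1. \]

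Now translate to $\Delta_n(R)$. The inequality gives
\[ \Delta_n(R)(k+1)=(k+1)-\delta_n(R)(k+1)\ \ge\ k-\delta_n(R)(k)=\Delta_n(R)(k). \]
Chaining these consecutive inequalities shows that $\Delta_n(R)$ is order-preserving. Also $\delta_n(R)(k)\ge 0$, so $\Delta_n(R)(k)\le k$. Finally, $\delta_n(R)(k)\le k$ because only the $k$ elements $0,\dots,k-1$ can lie in $D(k)$, so $\Delta_n(R)(k)\ge 0$. Thus $\Delta_n(R)$ does land in $[n]$.

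There is no real obstacle here. The one point that needs justification is the use of the tree restriction condition. The proposition stating it was phrased for a partial order $R$, while a transfer system is only assumed reflexive and transitive. So I will either note that $R$ is antisymmetric because it refines $\leq$, or derive the condition directly from condition (2) of Definition~\ref{def:transf}. For the direct derivation, apply (2) to $i~R~(k+1)$ and $k\le k+1$. On a chain the unique pseudo-meet of $i$ and $k$ is $\min(i,k)=i$, so (2) yields $i~R~k$.
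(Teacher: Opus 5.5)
Your proof is correct and follows the paper's argument: the paper proves $\{j\in[k] : j~R~(k+1)\}\subseteq\{j\in[k] : j~R~k\}$ via the restriction condition, which is your containment $D(k+1)\setminus\{k\}\subseteq D(k)$ with the reflexive element $k$ added to both sides, and it derives monotonicity of $\Delta_n(R)$ the same way. You also check that $\Delta_n(R)(k)\geq 0$, so the function really lands in $[n]$, and that condition (2) applies directly because the pseudo-meet on a chain is $\min(i,k)=i$; the paper leaves both of these details implicit.
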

\begin{proof} It follows immediately from the definition that $\delta_n(R)(0)=0$. Next observe $\delta_n(R)(k+1)$ counts the number of elements in the set $\{j\in [k]:j~R~(k+1)\}$ while $\delta_n(R)(k)+1$ counts the number of elements in the set $\{j\in [k]: j~R~k\}$. Thus it suffices to show 
\[\{j\in [k]: j~R~(k+1)\}\subseteq \{j\in [k]: j~R~k\},\]
but this follows immediately from the restriction condition.

For the statements about $\Delta_n$, observe
\begin{align*}
\Delta_n(R)(k)&=k-\delta_n(R)(k)\\
&= (k+1) - (\delta_n(R)(k)+1) \\
&\leq (k+1)-\delta_n(R)(k+1)\\
&=\Delta_n(R)(k+1).
\end{align*}
Hence $\Delta_n(R)$ is order-preserving. Finally $\Delta_n(R)(k)\leq k$ because $\delta_n(R)$ is nonnegative.
\end{proof}

We now prove there is an order-reversing bijection between transfer systems and order-preserving functions bounded by the identity for chains. 

\begin{thm}\label{thm:chainmonotone}
Let $M_n$ denote the set of order-preserving functions $f\colon [n]\to [n]$ that satisfy $f(k)\leq k$ for all $k$. Then $\Delta_n\colon \Tr([n])\to M_n$ is an order-reversing bijection, where in $M_n$ we have $f\leq g$ if and only if $f(k)\leq g(k)$ for all $k$.
\end{thm}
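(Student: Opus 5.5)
Proposition~\ref{pr:dlchains} already shows that $\Delta_n$ lands in $M_n$. Since $\Delta_n(R)(k)=k-\delta_n(R)(k)$, Proposition~\ref{prop:dlisinj} shows that $\Delta_n$ is injective. Order-reversal is also immediate: if $R\subseteq S$ then $\{i: i~R~k\}\subseteq\{i:i~S~k\}$, so $\delta_n(R)\leq\delta_n(S)$ pointwise and hence $\Delta_n(S)\leq\Delta_n(R)$. The remaining content is surjectivity. Since everything is finite and $\Delta_n$ is injective, one option is to check $\#\Tr([n])=\#M_n$ using the theorem of \cite{BBR1} together with the fact that $\#M_n=\Cat(n+1)$. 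However, the paper advertises a new proof of the Catalan count (Corollary~\ref{cor:catalan}), so I will instead construct a preimage directly.

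\textbf{Construction of the preimage.} Given $f\in M_n$, define a relation $R_f$ on $[n]$ by
\[ i~R_f~k \iff f(k)\leq i\leq k. \]
I expect $\delta_n(R_f)(k)=\#\{i: f(k)\leq i<k\}=k-f(k)$, so that $\Delta_n(R_f)=f$. I then need to check that $R_f$ is a transfer system.
\begin{enumerate}
\item Reflexivity holds because $f(k)\leq k$.
\item Refinement of $\leq$ holds by construction.
\item For the restriction condition, we are on a chain, so I use the tree version proved earlier. Suppose $i\leq j\leq k$ and $i~R_f~k$. Then $f(j)\leq f(k)\leq i$ because $f$ is order-preserving, so $i~R_f~j$.
\item For transitivity, suppose $i~R_f~j$ and $j~R_f~k$. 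Then $f(k)\leq j$, and I want $f(k)\leq i$. This is the one step needing an idea, and here the naive interval definition fails in general. For example, with $n=2$ and $f=(0,1,1)$, the definition gives $1~R~2$ and $0~R~1$? No: $f(1)=1$, so $0~R~1$ does not hold. Trying $f=(0,0,1)$ instead gives $0~R~1$ and $1~R~2$, but not $0~R~2$, so transitivity fails.
\end{enumerate}

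\textbf{Fixing transitivity.} The interval definition is therefore wrong, and I will not try to patch it. I will define the preimage recursively instead, mirroring the proof of Proposition~\ref{prop:dlisinj}. I induct on $n$. Given $f\in M_{n+1}$, the restriction $f|_{[n]}$ lies in $M_n$, so by induction there is $R'\in\Tr([n])$ with $\Delta_n(R')=f|_{[n]}$. I must choose the set $D=\{i\leq n: i~R~(n+1)\}$ with $\#D=n+1-f(n+1)$. The argument in Proposition~\ref{prop:dlisinj} shows $D$ is forced: it must be an up-closed subset of $\{i: i~R'~n\}\cup\{n\}$ with respect to the ordering ``$j<k\in D\Rightarrow j\in D$ whenever $j~R'~k$''. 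In other words, $D$ is obtained by taking $n$ together with everything $R'$-related to it, greedily closed under $R'$-predecessors. I will take $D$ to be the set of the $n+1-f(n+1)$ largest elements of $\{i\le n: i~R'~n\}\cup\{n\}$, and check that this set is closed under $R'$-predecessors (which gives transitivity) and contained in that set (which gives restriction). The cardinality constraint works because $n+1-f(n+1)\leq n-f(n)+1=\#(\{i: i~R'~n\}\cup\{n\})$. The closedness of a top segment under $R'$-predecessors is the main obstacle. For $j<k$ with $k\in D$ and $j~R'~k$, restriction gives $j~R'~n$. Any element $l$ with $j<l\le n$ in the set lies in $D$ if $j$ does not, and then I would use restriction on $j~R'~n$ to find $l$ with $k\le l$ in the segment. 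I expect this to require a careful case analysis, which is where I would concentrate effort.
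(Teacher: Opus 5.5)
Your treatment of well-definedness, injectivity and order-reversal is fine. Your surjectivity plan is the paper's: induct on $n$, then extend $R'$ by choosing $D=\{i\le n : i~R~(n+1)\}$ of size $n+1-f(n+1)$ inside $\{i\le n : i~R'~n\}$.

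\textbf{The gap is your choice of $D$.} Taking the \emph{largest} elements is wrong, and the closure property you flag as the ``main obstacle'' is simply false. If $n\in D$, transitivity forces every $j$ with $j~R'~n$ into $D$. So $D$ would have to be all of $\{i : i~R'~n\}$, which has size $n-f(n)+1$. Your top segment contains $n$ whenever $D\neq\emptyset$, so it breaks transitivity whenever $f(n)<f(n+1)\le n$.

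Your own example already shows this. For $f=(0,0,1)\in M_2$, induction gives $R'=\{0~R'~1\}$ on $[1]$ and $\#D=1$. The largest element of $\{0,1\}$ gives $D=\{1\}$, i.e.\ the relation $\{0~R~1,\ 1~R~2\}$, which you had already rejected as non-transitive. You have also read Proposition~\ref{prop:dlisinj} backwards. Its argument shows that if $k\in D$ and $j<k$ with $j~R'~n$, then restriction gives $j~R'~k$ and transitivity gives $j\in D$. So $D$ must be an \emph{initial} segment of $\{i : i~R'~n\}$ in the natural order, not a final one.

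\textbf{The fix.} Take $D$ to be the $n+1-f(n+1)$ \emph{smallest} elements of $\{i\le n : i~R'~n\}$; your cardinality bound shows there are enough. Closure under $R'$-predecessors is then one line. If $k\in D$ and $j~R'~k$ with $j<k$, then $j~R'~n$ by transitivity, so $j$ is an element of that set smaller than $k$, hence in $D$. Restriction follows from $D\subseteq\{i : i~R'~n\}$, as you say.

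This is exactly the paper's construction. The paper splits into two cases: if $f(n)=f(n+1)$ it takes all of $X\cup\{n\}$, and if $f(n)<f(n+1)$ it takes the smallest elements of $X$, so $n\notin D$.

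Incidentally, the counting route you set aside would be valid: injectivity together with $\#\Tr([n])=\Cat(n+1)=\#M_n$ from \cite{BBR1}. But it would make Corollary~\ref{cor:catalan} circular as an independent proof of the Catalan count, so your instinct to construct preimages directly is the right one.
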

\begin{proof}
We showed $\delta_n$ is injective in Proposition~\ref{prop:dlisinj} and thus $\Delta_n$ is injective since it is the composition of two injective functions.

We prove surjectivity using induction on $n$. First observe $M_0$ and $\Tr([0])$ each only contain one element, so surjectivity follows.

Now fix $n\geq 0$ and suppose for every function $f\in M_n$ we can find a transfer system $R\in \Tr([n])$ such that $\Delta_n(R)=f$. Let $f\in M_{n+1}$. We can restrict the domain of $f$ to get $f|_{[n]}\colon [n]\to [n]$, and by definition of $M_{n+1}$ it is clear $f|_{[n]}\in M_n$. Let $R'\in \Tr(f)$ be the transfer system such that $\Delta_n(R')=f|_{[n]}$. 

We next construct a relation $R$ so that $R|_{[n]}=R'$. First for all $i,j\in [n]$ define $R$ so that $i~R~j$ if and only if $i~(R')~j$. It remains to specify the relations involving $n+1$.

Suppose $f(n+1)=m$. We want to define $R$ so that
\[
\Delta_{n+1}(R)(n+1)= m, \quad \text{i.e.} \quad \#\{i\in [n]\mid i~\cancel{R}~(n+1)\} =m, 
\]
or equivalently so that
\[
\delta_{n+1}(R)(n+1)=n+1-m, \quad \text{i.e.}\quad \#\{i\in [n]\mid i~R~(n+1)\}= n+1-m.
\]
Let $X=\{j\in [n-1] : j~R~n\}$. By the inductive hypothesis \[\#X =\delta_n(R')(n)= n-f(n).\] Since $f$ is order-preserving we also know $f(n)\leq f(n+1)=m$. We break into cases: $f(n)=m$ and $f(n)<m$.

First if $f(n)=m$ define $R$ so that $j~R~(n+1)$ if and only if $j\in X$, $j=n$, or $j=n+1$. Note we have  $\#\{i\in [n]\mid i~R~(n+1)\}=\#X+1=n-m+1$, as desired.

Next suppose $f(n)<m$. Since all outputs are integers and $f$ is monotone, observe $n+1-m \leq n-f(n)=\#X$. Thus we can fix the $n+1-m$ smallest elements $x_1,x_2, \dots x_{n+1-m}$ in $X$. Define $j~R~(n+1)$ if and only if $j\in \{x_1,x_2,\dots, x_{n+1-m}\}$ or $j=n+1$. 

Finally we check $R$ is a transfer system on $[n+1]$ in either case. The fact that $R$ is reflexive and refines $\leq$ follows from the inductive hypothesis and the construction of $R$. For transitivity, fix $x<y<z$ with relations $x~R~y$ and $y~R~z$. If $x,y,z\in [n]$ then $x~R~z$ by the inductive hypothesis. Thus suppose $z=n+1$. Note by construction of $R$, $y~R~(n+1)$ implies $y~R~n$. By transitivity of $R$ on $[n]$, we learn $x~R~n$. But since $x\leq y$ and we chose minimal elements in the paragraph above, we must have that $x~R~(n+1)$ as well. This verifies transitivity. 

For the tree restriction condition, fix relations $x~R~z$ and $x\leq y\leq z$. If $z\leq n$ then we know $x~R~y$ by the inductive hypothesis. Thus suppose $z=n+1$. Note if $y=z$ then $x~R~y$, so also suppose $x\leq y < z$. Now $x~R~(n+1)$ implies $x~R~n$ by the construction of $R$. By the restriction condition for $R|_{[n]}$ we thus know $x~R~y$. 

Finally we prove $\Delta_n$ is order-reversing. Fix $R, R'\in \Tr([n])$. Suppose $R'\leq R$. Then it is clear $\delta_n(R')(k)\leq \delta_n(R)(k)$ for all $k\in [n]$ since any relation that holds in $R'$ also holds in $R$. Thus $\Delta_n(R')(k)\geq \Delta_n(R)(k)$ for all $k$ and so $\Delta_n(R')\geq \Delta_n(R)$.
\end{proof}
As seen in Figure~\ref{fig:2delta}, while $\Delta_n$ is a map of posets $\Tr([n])^{\op}\to M_n$, it is not a poset isomorphism. The above theorem can be used to show transfer systems on chains are enumerated by the Catalan numbers, which was originally proven in \cite{BBR1}.

\begin{cor}\label{cor:catalan}
The number of transfer systems on $\Tr([n])$ is equal to $\Cat(n+1)$. 
\end{cor}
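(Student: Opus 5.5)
By Theorem~\ref{thm:chainmonotone}, $\Delta_n$ is a bijection from $\Tr([n])$ onto $M_n$. So it suffices to show $\#M_n=\Cat(n+1)$, where $M_n$ is the set of order-preserving functions $f\colon[n]\to[n]$ with $f(k)\leq k$ for all $k$. This is exactly the second item in the list of Catalan interpretations recalled in Section~\ref{sec:back}: there $\Cat(m)$ counts order-preserving $f\colon[m-1]\to[m-1]$ with $f(k)\leq k$. Taking $m=n+1$ gives the claim immediately.

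To keep the argument self-contained, I would also sketch why that count holds, via Dyck paths. Given $f\in M_n$, form the North-East lattice path from $(0,0)$ to $(n+1,n+1)$ whose $k$th north step, for $k=0,\dots,n$, occurs at $x$-coordinate $f(k)$. Put differently, the path reads the weakly increasing sequence $f(0)\leq f(1)\leq\cdots\leq f(n)$ as the column heights of its vertical steps.

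The constraint $f(k)\leq k$ says the $k$th north step, which starts at height $k$, lies at $x$-coordinate at most $k$. Equivalently, the path never passes strictly below the diagonal, so under the reflection $(x,y)\mapsto(y,x)$ it never goes above $y=x$. Weak monotonicity of $f$ is exactly what makes the path well defined: its horizontal steps fill the gaps between consecutive values, plus a final run out to $x=n+1$. Conversely, every Dyck path is obtained in this way by reading off the $x$-coordinates of its north steps.

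The resulting bijection $M_n\to\{\text{Dyck paths to }(n+1,n+1)\}$ then gives $\#\Tr([n])=\#M_n=\Cat(n+1)$. The only delicate point is the index bookkeeping: the domain $[n]$ has $n+1$ elements, so the paths run to $(n+1,n+1)$ and we land on $\Cat(n+1)$ rather than $\Cat(n)$. A quick sanity check is $n=2$, where Figure~\ref{fig:2delta} lists five functions and $\Cat(3)=5$.
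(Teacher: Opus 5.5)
Your proof is correct and matches the paper's: reduce via Theorem~\ref{thm:chainmonotone} to counting $M_n$, then sketch a bijection with Dyck paths from $(0,0)$ to $(n+1,n+1)$. Your path, with north steps at $x$-coordinates $f(k)$, is just the reflection across $y=x$ of the paper's path, which has east steps at heights $f(k)$.
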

\begin{proof}
It is well-known that the number of order-preserving functions $f\colon [n]\to [n]$ that satisfy $f(k)\leq k$ for all $k$ is exactly Cat$(n+1)$, so the corollary  immediately follows from Theorem~\ref{thm:chainmonotone}. 
    
For the curious reader, we briefly outline why $\#M_n=\Cat(n+1)$ by producing a bijection from $M_{n}$ to the set of Dyck paths from $(0,0)$ to $(n+1, n+1)$---these are lattice paths that never cross above (but can touch) the diagonal $y=x$ and consist only of moves that are one unit east or one unit north. For a function $f\colon [n]\to[n]$, plot the lattice points $(k,f(k))$ for $k\in [n]$ as well as the point $(n+1, n+1)$. Now use these points to draw a Dyck path as follows. Start at $(0,0)$, move east to $(1,0)$, move north (if necessary) to $(1,f(1))$, move east to $(2,f(1))$, move north (possibly multiple units) to $(2,f(2))$, move east to $(3,f(2))$, and so on, until you reach $(n,f(n))$. Then move east to $(n+1,f(n))$ and finally north to $(n+1, n+1)$. See Figure~\ref{fig:dyckpaths} for two examples when $n=3$. We leave it as an exercise for the reader to construct the inverse.
    \begin{figure}[ht]
    \begin{center}
        \begin{tikzpicture}[scale=0.4]
            \draw[step=1cm, gray, thin] (0,0) grid (4,4);
            \draw[dashed] (-0.1,-0.1)--(4.1,4.1);
            \draw[thick] (0,0)--(1,0)--(2,0)--(2,1)--(3,1)--(3,3)--(4,3)--(4,4);
            \draw (0,0) node{$\bullet$};
            \draw (1,0) node{$\bullet$};
            \draw (2,1) node{$\bullet$};
            \draw (3,3) node{$\bullet$};
            \draw[red] (4,4) node{$\bullet$};
        \end{tikzpicture}\hspace{0.3in}
        \begin{tikzpicture}[scale=0.4]
            \draw[step=1cm, gray, thin] (0,0) grid (4,4);
            \draw[dashed] (-0.1,-0.1)--(4.1,4.1);
            \draw[thick] (0,0)--(1,0)--(1,1)--(2,1)--(3,1)--(3,2)--(4,2)--(4,4);
            \draw (0,0) node{$\bullet$};
            \draw (1,1) node{$\bullet$};
            \draw (2,1) node{$\bullet$};
            \draw (3,2) node{$\bullet$};
            \draw[red] (4,4) node{$\bullet$};
        \end{tikzpicture}
    \end{center}
    \caption{Dyck paths for ($0\mapsto 0$, $1\mapsto 0$, $2\mapsto 1$, $3\mapsto 3$) and ($0\mapsto 0$, $1\mapsto 1$, $2\mapsto 1$, $3\mapsto 2$).}
    \label{fig:dyckpaths}
    \end{figure}
\end{proof}

\begin{remark}\label{rem:stanlat}
The Stanley lattice and the Tamari lattice are two common orderings defined on Catalan objects. These are related in that the Tamari lattice is a proper refinement of the Stanley lattice. The collection of Dyck paths discussed in the above proof has a partial ordering given by $P_1\leq P_2$ if and only if the path given by $P_2$ is always above or on the path given by $P_1$. This ordering of Dyck paths gives rise to the Stanley lattice. The bijection with $M_n$ described above is easily seen to be a lattice isomorphism, so as a poset, $M_n$ is also the Stanley lattice. From \cite{BBR1} we know the lattice $\Tr([n])$ is given by the Tamari lattice. Thus it is consistent that our function $\Delta_n$ is order-reversing but does not give an isomorphism of lattices.
\end{remark}

We now generalize these constructions to arborescences using Theorem~\ref{thm:pasting}. We start by defining a similar notion of down-labeling and an associated order-preserving function. 
\begin{defn}
    Let $\mathcal{A}$ be an arborescence and suppose $R \in \Tr(\A)$. Define a function $\delta_{\A}(R)\colon\A\to \N$ by $\delta_\A(R)(x) = \#\{y\in \A ~|~ y~R~x\}-1$. We call the function $\delta_\A(R)$ the \textbf{down-labeling} of $\A$ associated the transfer system $R$. Define $\Delta_\A(R)\colon \A \to \N$ to be the function $\Delta_\A(R)(x)=\rho(x)-\delta_\A(x)$. 
\end{defn}
 
As with chains, we can think of $\Delta_\A(R)$ as measuring how far $R$ is from the complete transfer system. 
Figure~\ref{fig:lambda} shows an example of a transfer system $R$ on an aborscence $\A$ and the corresponding functions $\delta_\A(R)$ and $\Delta_\A(R)$.  

\begin{figure}[ht]
  \centering
  \scalebox{0.8}{
  \begin{tikzpicture}[scale=0.8, thick,level/.style={sibling distance=20mm/#1}]
    \node[inner sep=0] (root-a) at (0,0) {\large{$\bullet$}} [grow=up,thick]
    child {[fill] circle(2pt)
      child {[fill] circle(2pt)}
    }
    child {[fill] circle(2pt)
      child {[fill] circle(2pt)}
      child {[fill] circle(2pt)}
    }
    child {[fill] circle(2pt)
      child {[fill] circle(2pt)}
      child {[fill] circle(2pt)}
      child {[fill] circle(2pt)}
    };
  
    \node[inner sep=0] (root-t) at (7,0) {} [grow=up]
    child {[fill] circle(2pt)
      child [dashed] {[fill] circle (2pt)}
    }
    child [solid] {[fill] circle(2pt)
      child [dashed] {[fill] circle(2pt)}
      child [solid] {[fill] circle(2pt)}
    }
    child [dashed] {[fill] circle(2pt)
      child [solid] {[fill] circle(2pt)}
      child [solid] {[fill] circle(2pt)}
      child [solid] {[fill] circle(2pt)}
    };
    \draw[fill, thin] (root-t) circle (2pt);
    \draw (root-t) .. controls +(10mm,20mm) .. (root-t-1-1);
    \draw (root-t) .. controls +(-5mm,5mm) .. (root-t-2-2);

    \draw (root-t) node[anchor=north, below=3mm] {\(R\)};
    \draw (root-a) node[anchor=north, below=3mm] {\(\A\)};
  \end{tikzpicture}}\medskip
  
  \scalebox{0.8}{
  \begin{tikzpicture}[scale=0.8,thick,level/.style={sibling distance=20mm/#1}]
  \node (root-l) at (0,0) {0} [grow=up,thick]
    child {node {1}
      child {node {1}}
    }
    child {node {1}
      child {node {0}}
      child {node {2}}
    }
    child {node {0}
      child {node{1}}
      child {node{1}}
      child {node{1}}
    };
    \draw (root-l) node[anchor=north, below=3mm] {\(\delta_\A(R)\)};

  \node (root-l) at (7,0) {0} [grow=up,thick]
    child {node {0}
      child {node {1}}
    }
    child {node {0}
      child {node {2}}
      child {node {0}}
    }
    child {node {1}
      child {node{1}}
      child {node{1}}
      child {node{1}}
    };
    \draw (root-l) node[anchor=north, below=3mm] {\(\Delta_\A(R)\)};
  \end{tikzpicture}}
  \caption{Example of $\delta_\A(R)$ and $\Delta_\A(R)$.}
  \label{fig:lambda}
\end{figure}

\begin{prop}
Let $\A$ be an arborescence and $R$ be a transfer system on $\A$. Then $\Delta_\A(R)\colon \A\to \N$ is an order-preserving function and $\Delta_\A(R) \leq \rho$.
\end{prop}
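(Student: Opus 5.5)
The plan is to reduce to the chain case, Proposition~\ref{pr:dlchains}, by restricting $R$ to principal ideals. For any $x\in\A$, the principal ideal $\langle x\rangle$ is the unique path from $\bot$ to $x$. It is a chain isomorphic to $[\rho(x)]$, with the element of rank $k$ corresponding to $k$. By the proposition on restrictions to ideals, $R|_{\langle x\rangle}$ is a transfer system on this chain. The first key observation is that the down-labeling is local to this ideal. If $y~R~x$ then $y\leq x$, so $y\in\langle x\rangle$. Hence $\delta_\A(R)(x)$ equals $\delta_{\rho(x)}(R|_{\langle x\rangle})(\rho(x))$, and therefore
\[
\Delta_\A(R)(x)=\Delta_{\rho(x)}(R|_{\langle x\rangle})(\rho(x)).
\]
More generally, for any $w\leq x$ we have $\langle w\rangle\subseteq\langle x\rangle$, so the same argument gives $\Delta_\A(R)(w)=\Delta_{\rho(x)}(R|_{\langle x\rangle})(\rho(w))$.

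The bound then follows directly. Proposition~\ref{pr:dlchains} gives $\Delta_{\rho(x)}(R|_{\langle x\rangle})(k)\leq k$ for all $k$. Taking $k=\rho(x)$ yields $\Delta_\A(R)(x)\leq\rho(x)$. One could also see this directly, since $\delta_\A(R)(x)\geq 0$. We should also check that $\Delta_\A(R)$ lands in $\N$, i.e.\ is nonnegative. This holds because $\delta_\A(R)(x)\leq\#\langle x\rangle-1=\rho(x)$.

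For order preservation, suppose $w\leq x$ in $\A$. Then $w$ and $x$ both lie in the chain $\langle x\rangle$, with $\rho(w)\leq\rho(x)$. Using the identification from the first paragraph, $\Delta_\A(R)(w)$ and $\Delta_\A(R)(x)$ are the values of the single function $\Delta_{\rho(x)}(R|_{\langle x\rangle})$ at $\rho(w)$ and at $\rho(x)$. That function is order-preserving by Proposition~\ref{pr:dlchains}, so $\Delta_\A(R)(w)\leq\Delta_\A(R)(x)$.

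The only step requiring care is the identification of $\delta_\A(R)(w)$ with the chain down-labeling of $R|_{\langle x\rangle}$ at rank $\rho(w)$. Two facts are needed for this. First, every $R$-predecessor of $w$ lies in $\langle w\rangle\subseteq\langle x\rangle$. Second, the isomorphism $\langle x\rangle\cong[\rho(x)]$ sends $w$ to $\rho(w)$, which holds because ranks along the root path increase by one at each cover. Beyond this, everything is inherited from the chain case.
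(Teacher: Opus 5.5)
Your proof is correct, but it takes a different route from the paper's. The paper does not reduce to the chain case. Instead it reruns the chain argument directly on the tree: $\Delta_\A(R)\leq\rho$ because $\delta_\A(R)\geq 0$, and for a cover $y\gtrdot x$ the restriction condition gives $\delta_\A(R)(y)\leq\delta_\A(R)(x)+1$, hence $\Delta_\A(R)(x)\leq\Delta_\A(R)(y)$. It leaves implicit that monotonicity along an arbitrary relation $w\leq x$ follows by chaining covers.

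You instead show that the down-labeling is local: every $R$-predecessor of $w$ lies in $\langle w\rangle\subseteq\langle x\rangle$. Hence $\Delta_\A(R)$ restricted to $\langle x\rangle$ is exactly $\Delta_{\rho(x)}$ of the restricted transfer system, and both claims are inherited from Proposition~\ref{pr:dlchains}.

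The paper's route is shorter and isolates the one inequality that matters. Yours repeats no argument and handles all comparable pairs at once rather than only covers. It also checks explicitly that $\Delta_\A(R)$ takes values in $\N$, which the paper leaves implicit. Finally, it proves the commutativity $(-)|_{\langle x_j\rangle}\circ\Delta_\A=\Delta_{m_j}\circ\Res^\A_{\langle x_j\rangle}$ that the paper later asserts without comment in the proof of Theorem~\ref{thm:arbij}.
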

\begin{proof}
This is very similar to the proof of Proposition~\ref{pr:dlchains} so we just provide an outline. First $\Delta_\A(R)= \rho - \delta_\A(R) \leq \rho$ because $\delta_\A(R)\geq 0$. Suppose $x\leq y$ with $\rho(y)=\rho(x)+1$ (so $y$ covers $x$). Then we can use the restriction condition to show that $\delta_\A(y)\leq \delta_\A(x)+1$. Thus \[\Delta_\A(y)=\rho(y)-\delta_\A(y) \geq \rho(x)+1 - (\delta_\A(x)+1) = \Delta_\A(x).\]
\end{proof}

\begin{thm}\label{thm:arbij}
Let \(\mathcal{A}\) be an arborescence. Let \(M_\mathcal{A}\) be the set of functions
\[ \{f\colon \mathcal{A}\to \N~|~ \text{\(f\) is order-preserving and \(f \leq \rho\)}\}.\]
The map $\Delta_\A\colon \Tr(\A)\to M_\A$ is an order-reversing bijection.
\end{thm}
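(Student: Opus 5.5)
The plan is to reduce to the chain case (Theorem~\ref{thm:chainmonotone}) by covering $\A$ with principal ideals and then using the pasting theorem (Theorem~\ref{thm:pasting}). Let $\ell_1,\dots,\ell_r$ be the leaves of $\A$. Each principal ideal $\langle \ell_j\rangle$ is a chain isomorphic to $[\rho(\ell_j)]$, with the isomorphism given by $\rho$ itself. Moreover, $\A=\bigcup_j\langle \ell_j\rangle$. Any intersection $\langle \ell_j\rangle\cap\langle \ell_k\rangle$ is again a principal ideal $\langle \ell_j\meet \ell_k\rangle$, which is an initial segment of both chains.

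The first step is the compatibility of $\Delta_\A$ with restriction to these chains. For $x\in\langle \ell\rangle$, every $y$ with $y~R~x$ satisfies $y\leq x$, so $y\in\langle\ell\rangle$. Hence
\[\delta_\A(R)(x)=\delta_{\rho(\ell)}(R|_{\langle \ell\rangle})(\rho(x)),\]
and therefore
\[\Delta_\A(R)|_{\langle\ell\rangle}=\Delta_{\rho(\ell)}(R|_{\langle\ell\rangle})\]
under the identification $\langle\ell\rangle\cong[\rho(\ell)]$. The preceding proposition already gives $\Delta_\A(R)\in M_\A$.

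Order-reversal is immediate, exactly as in the chain case: if $R'\subseteq R$ then $\delta_\A(R')\leq\delta_\A(R)$ pointwise, so $\Delta_\A(R')\geq\Delta_\A(R)$.

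For injectivity, suppose $\Delta_\A(R)=\Delta_\A(S)$. Then on each chain $\langle\ell_j\rangle$ the chain maps agree, so $R|_{\langle\ell_j\rangle}=S|_{\langle\ell_j\rangle}$ by Theorem~\ref{thm:chainmonotone}. The uniqueness clause of Theorem~\ref{thm:pasting} then forces $R=S$. Alternatively, every pair $y~R~x$ lies inside a single $\langle\ell_j\rangle$, which gives the same conclusion directly.

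For surjectivity, fix $f\in M_\A$. Each restriction $f|_{\langle\ell_j\rangle}$, transported to $[\rho(\ell_j)]$, is order-preserving and bounded by the identity, since $\rho$ restricts to the identification. So it lies in $M_{\rho(\ell_j)}$, and Theorem~\ref{thm:chainmonotone} gives a transfer system $R_j$ on $\langle\ell_j\rangle$ with $\Delta(R_j)=f|_{\langle\ell_j\rangle}$.

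The main point to check is the hypothesis of Theorem~\ref{thm:pasting}, namely that $R_j$ and $R_k$ agree on $I=\langle \ell_j\rangle\cap\langle\ell_k\rangle$. This holds because $I$ is an initial segment $[m]$ of both chains. The restriction of a chain transfer system to an initial segment has, as its $\Delta$, the restriction of the original $\Delta$; this is the same computation as in the first step. Thus
\[\Delta_m(R_j|_I)=f|_I=\Delta_m(R_k|_I),\]
and injectivity of $\Delta_m$ (Proposition~\ref{prop:dlisinj}) gives $R_j|_I=R_k|_I$.

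Pasting then produces $T\in\Tr(\A)$ with $T|_{\langle\ell_j\rangle}=R_j$ for every $j$. By the compatibility in the first step, $\Delta_\A(T)$ agrees with $f$ on every $\langle\ell_j\rangle$, and since these chains cover $\A$, we get $\Delta_\A(T)=f$.
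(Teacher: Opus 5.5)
Your proof is correct and follows the paper's argument: you cover $\A$ by the chains $\langle \ell_j\rangle$ generated by the leaves, relate $\Delta_\A$ to the chain maps $\Delta_{\rho(\ell_j)}$ by restriction, and then get injectivity from the uniqueness part of Theorem~\ref{thm:pasting} and surjectivity from its existence part. You also supply two details that the paper's proof leaves implicit: the check that the $R_j$ agree on the pairwise intersections $\langle \ell_j\rangle\cap\langle \ell_k\rangle$ (using injectivity of $\Delta_m$ on the common initial segment), and the order-reversal of $\Delta_\A$.
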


\begin{proof}
Suppose the leaf set of \(\mathcal{A}\) is \(\{x_1, \dots, x_n\}\). Then $\A=\langle x_1,\dots, x_n \rangle$. Note for each $1\leq j \leq n$, the principal ideal $\langle x_j \rangle$ is isomorphic to the chain $[\rho(x_j)]$. Let $m_j=\rho(x_j)$. We have a map $(-)|_{\langle x_j \rangle}\colon M_\A \to M_{m_j}$ given by restricting the domain to $\langle x_j \rangle$ and then identifying $\langle x_j \rangle$ with $[m_j]$. We have the following commutative diagram (in the vertical maps we are also using the unique poset isomorphism $\langle x_j \rangle \cong [m_j]$):
\begin{center}
  \begin{tikzcd}
      \Tr(\A)\ar[d,"\Res^\A_{\langle x_j \rangle}" left] \ar[r, "\Delta_\A"] & M_\A \ar[d,"(-)|_{\langle x_j \rangle}" right] \\
      \Tr([m_j]) \ar[r, "\Delta_{m_j}" below] & M_{m_j}~.
  \end{tikzcd}
\end{center}

We first show $\Delta_\A$ is injective. Fix $R,S\in \Tr(\A)$ and suppose $\Delta_\A(R)=\Delta_\A(S)$. Recall that $\Delta_{m_j}$ is injective for each $1\leq j \leq n$. So from the commutative diagram, we learn $\Res^\A_{\langle x_j \rangle}(R) = \Res^\A_{\langle x_j \rangle}(S)$. The ideals $\langle x_1 \rangle, \dots, \langle x_n\rangle$ union to give all of $\A$ and so by the uniqueness in Theorem~\ref{thm:pasting} we have that $R=S$.

We next show $\Delta_\A$ is surjective. Fix a monotone increasing function $\mu\colon \A\to \N$ with \(\mu \leq \rho\). For each $j$, $\mu|_{\langle x_j \rangle}$ is a monotone function. Note that the chain $\langle x_j \rangle$ is isomorphic to $[m_j]$ under the mapping $y\mapsto \rho(y)$. Using this identification of $\langle x_j \rangle$ with the chain $[m_j]$, we can use Theorem~\ref{thm:chainmonotone} to fix a transfer system $R_j$ on $\langle x_j \rangle$ so that $\Delta_{m_j}(R_j) = \mu|_{\langle x_j \rangle}$. Now from Theorem~\ref{thm:pasting}, there exists a transfer system $R$ on $\A$ so that $R|_{\langle x_j \rangle} = R_j$ for all $j$. Finally note by construction $\Delta_\A(R)= \mu$.

\end{proof}

\section{The recurrence relation}\label{sec:recur}
We now prove a recurrence for order-preserving functions bounded by the rank function. We then use the bijection from Theorem~\ref{thm:arbij} together with the recurrence to enumerate transfer systems in a handful of examples. We start with a definition. 

\begin{defn}
Let $\A$ be an arborescence and $x\in \A$. Write $x^\uparrow$ for the set $\{y\in \A ~|~ x\leq y\}$. For $n\in \N$ define
\(B(x,n)\) to be the set of order-preserving functions \(\sigma\colon x^{\uparrow}\to \N\) such that \(\sigma \leq \rho|_{x^{\uparrow}}\) and \(\sigma(x) \geq n\).
We denote the cardinality of $B(x,n)$ by $b(x,n)$.
\end{defn}
Note if $n>\rho(x)$ then $B(x,n)=\emptyset$. Also by Theorem~\ref{thm:arbij} we have that
\[
\#\Tr(\A)= b(\bot, 0).
\]
Our recurrence relation provides a formula for $b(x,n)$ in terms of $b(y,i)$ where $y$ ranges over all covers of $x$. This will terminate when we reach a leaf. We start with an easy lemma.
\begin{lemma}\label{lem:leafcount}
Suppose $\A$ is an arborescence and $x\in \A$ is a leaf. For $0\leq n\leq \rho(x)$ we have that
\[
b(x,n)=\rho(x)-n+1.
\]
\end{lemma}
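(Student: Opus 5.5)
The plan is to use the fact that when $x$ is a leaf, the up-set $x^{\uparrow}$ is just the singleton $\{x\}$. Since $x$ is a leaf of the Hasse diagram, no element covers $x$. In a finite poset, any $y$ with $x<y$ lies above some cover of $x$ (take a minimal element of the nonempty finite set $\{z : x<z\leq y\}$). Hence no $y$ satisfies $x<y$, and $x^{\uparrow}=\{x\}$.

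With this, a function $\sigma\colon x^{\uparrow}\to\N$ is just a choice of the single value $\sigma(x)\in\N$. The order-preserving condition is vacuous on a one-element poset. The conditions defining $B(x,n)$ therefore reduce to $\sigma(x)\leq \rho|_{x^{\uparrow}}(x)=\rho(x)$ and $\sigma(x)\geq n$. So $\sigma\mapsto\sigma(x)$ gives a bijection from $B(x,n)$ onto the integer interval $\{n,n+1,\dots,\rho(x)\}$. Under the hypothesis $0\leq n\leq\rho(x)$ this interval is nonempty and has $\rho(x)-n+1$ elements, which gives $b(x,n)=\rho(x)-n+1$.

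There is no real obstacle here. The only step needing a sentence of justification is that a leaf has no elements strictly above it, which follows from the finiteness argument about covers given above.
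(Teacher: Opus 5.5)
Your proof is correct and is the same as the paper's: for a leaf, $x^{\uparrow}=\{x\}$, so an element of $B(x,n)$ is just a choice of $\sigma(x)\in\{n,\dots,\rho(x)\}$, which gives $\rho(x)-n+1$ options. Your justification that a leaf has nothing strictly above it (a minimal element of $\{z : x<z\leq y\}$ would be a cover of $x$) fills in a detail the paper leaves implicit.
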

\begin{proof}
Since $x$ is a leaf we have that $x^\uparrow=\{x\}$. Thus the set $B(x,n)$ consists of functions $\sigma\colon \{x\}\to \N$ such that $n\leq \sigma(x)\leq \rho(x)$. Such a function is determined by just picking an integer in the appropriate range, and there are exactly $\rho(x)-n+1$ integers to pick from.
\end{proof}
\begin{thm}\label{thm:recur}
Let $\A$ be an arborescence and $x\in \A$. For $y\in \A$, we write $y\gtrdot x$ for ``$y$ covers $x$''. Then we have
\[ b(x,n) = \sum_{i = n}^{\rho(x)} \prod_{y \gtrdot x} b(y,i).\]
\end{thm}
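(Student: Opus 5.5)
I will prove the recurrence by partitioning $B(x,n)$ according to the value $\sigma(x)$ and then factoring each block as a product over the children of $x$. First, since every $\sigma\in B(x,n)$ satisfies $n\leq\sigma(x)\leq\rho(x)$, we have a disjoint union
\[
B(x,n)=\bigsqcup_{i=n}^{\rho(x)} E(x,i), \qquad E(x,i)=\{\sigma\in B(x,n) ~|~ \sigma(x)=i\}.
\]
(When $n>\rho(x)$ both sides are empty and the empty sum is $0$, consistent with the convention noted after the definition.) So it suffices to show $\#E(x,i)=\prod_{y\gtrdot x} b(y,i)$ for each $n\leq i\leq\rho(x)$.

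For the factorization, I will use the tree structure: because the Hasse diagram of $\A$ is a rooted tree, $x^\uparrow\setminus\{x\}$ is the disjoint union of the sets $y^\uparrow$ over the covers $y\gtrdot x$. Moreover, no element of $y^\uparrow$ is comparable to any element of $y'^\uparrow$ for distinct covers $y\neq y'$. Indeed, if $u\geq y$ and $v\geq y'$ with $u\leq v$, then $y\leq v$ and $y'\leq v$, so by the characterizing proposition of rooted trees $y,y'$ are comparable; since they have the same rank $\rho(x)+1$ they would be equal. Also every $z>x$ lies above exactly one cover of $x$ (the unique path from $x$ to $z$ starts with a cover, and uniqueness follows from the same comparability argument). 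Consequently the order relations in $x^\uparrow$ are exactly: $x\leq$ everything, together with the relations internal to each $y^\uparrow$.

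Define $\Phi\colon E(x,i)\to\prod_{y\gtrdot x}B(y,i)$ by $\sigma\mapsto(\sigma|_{y^\uparrow})_{y}$. Each restriction is order-preserving, bounded by $\rho$, and satisfies $\sigma(y)\geq\sigma(x)=i$, so it lands in $B(y,i)$. Injectivity is clear, since $x^\uparrow=\{x\}\cup\bigcup_y y^\uparrow$ and $\sigma(x)=i$ is fixed. For surjectivity, given $(\sigma_y)_y$, glue them together with $x\mapsto i$. By the description of the order above, checking that the result is order-preserving reduces to the relations $x\leq z$, where $\sigma_y(z)\geq\sigma_y(y)\geq i$ for the unique $y$ below $z$, and to relations within a single $y^\uparrow$. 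The bound $\leq\rho$ holds since $i\leq\rho(x)$. This gives the bijection, and summing over $i$ yields the formula.

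The only step needing care is the structural claim that $x^\uparrow\setminus\{x\}$ splits into pairwise incomparable up-sets $y^\uparrow$. I expect that to be the main (mild) obstacle, and I will handle it via the proposition characterizing rooted-tree Hasse diagrams together with the rank function. The case where $x$ is a leaf is consistent: the product is empty and equals $1$, so the sum gives $\rho(x)-n+1$, matching Lemma~\ref{lem:leafcount}.
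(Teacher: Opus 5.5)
Your proof is correct and follows the paper's argument. The paper likewise builds the bijection $B(x,n)\to\coprod_{i=n}^{\rho(x)}\prod_{y\gtrdot x}B(y,i)$ by restricting to the up-sets $y^\uparrow$ and gluing back with $\sigma(x)=i$, using that the $y^\uparrow$ partition $x^\uparrow\setminus\{x\}$. Your explicit check that distinct branches are incomparable, which is what makes the glued function order-preserving, and your uniform treatment of the leaf case via the empty product, supply details the paper leaves implicit or treats as a separate case.
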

\begin{proof}
Fix \(x\in \mathcal{A}\) and \(n\leq \rho(x)\). We have two cases: $x$ is a leaf and $x$ is an internal node. First suppose $x$ is a leaf. It follows that $x$ has no covers and so
\[ \sum_{i=n}^{\rho(x)}\prod_{y \gtrdot x} b(y,i)= \sum_{i=n}^{\rho(x)}1 = \rho(x)-n+1.\]
By Lemma~\ref{lem:leafcount} we conclude \(b(x,n)=\sum_{i=n}^{\rho(x)}\prod_{y \gtrdot x} b(y,i)\) in the case that $x$ is a leaf.

Now suppose $x$ is not a leaf. Consider $\sigma \in B(x,n)$. Then $\sigma$ is a function \(\sigma\colon x^\uparrow\to \N\) such that \(\sigma \leq \rho|_{x^\uparrow}\) and \(\sigma(x)=i\) for some \(n\leq i \leq \rho(x)\). Also notice for each \(y \gtrdot x\) the function \(\sigma|_{y^\uparrow}\) must satisfy $\sigma|_{y^\uparrow}\leq \rho|_{y^\uparrow}$ and $\sigma|_{y^\uparrow}(y)\geq i$. Thus we have function
  \[
  \Phi\colon B(x,n) \to \coprod_{i=n}^{\rho(x)}\prod_{y\gtrdot x} B(y,i),\quad \sigma\mapsto (\sigma|_{y^\uparrow})_{y\gtrdot x}.
  \]
On the other hand, the set $x^\uparrow\setminus \{x\}$ is partitioned by $y^\uparrow$ as $y$ ranges over all covers of $x$ (this is because $\A$ is an arborescence). Thus for each $n\leq i \leq \rho(x)$, we get a map
  \[
  \prod_{y\gtrdot x} B(y,i) \to B(x,n),\quad (\sigma_y)_{y\gtrdot x} \mapsto \sigma
  \]
where $\sigma\colon x^\uparrow\to \N$ is defined by $\sigma(x)=i$ and for $z>x$, $\sigma(z)=\sigma_y(z)$ where $y$ is the unique cover of $x$ satisfying $x<y\leq z$. This gives rise to a map
    \[
  \Psi\colon \coprod_{i=n}^{\rho(x)}\prod_{y\gtrdot x} B(y,i)\to B(x,n) ,\quad (\sigma_{y})_{y\gtrdot x}\mapsto \sigma.
  \]
Observe $\Phi\circ \Psi=\id$ and $\Psi \circ \Phi =\id$. We conclude that
  \[
  b(x,n)=\left| \coprod_{i=n}^{\rho(x)}\prod_{y\gtrdot x} B(y,i)\right| = \sum_{i=n}^{\rho(x)} \prod_{y \gtrdot x} b(y,i). 
  \]
\end{proof}

\begin{example}\label{ex:depth2}
Consider the arborescence whose Hasse diagram is the tree shown below. 
\begin{center}
    \scalebox{0.9}{\begin{tikzpicture}
        \node (00) at (0,0) {$\bullet$};
        \node (10) at (-1,1) {$\bullet$};
        \node (11) at (1,1) {$\bullet$};
        \node (20) at (-2,2) {$\bullet$};
        \node (21) at (-1,2) {$\bullet$};
        \node (22) at (0,2) {$\bullet$};
        \node (23) at (1,2) {$\bullet$};
        \node (24) at (2,2) {$\bullet$};
        \draw[thick] (00.center)--(10.center)--(20.center);
        \draw[thick] (10.center)--(21.center);
        \draw[thick] (10.center)--(22.center);
        \draw[thick] (00.center)--(11.center)--(23.center);
        \draw[thick] (11.center)--(24.center);
        \draw[thick] (10) node[left]{$x_1$};
        \draw (00) node[right]{$\bot$};
        \draw (11) node[right]{$~x_2$};
        \draw (20) node[above]{$y_1$};
        \draw (21) node[above]{$y_2$};
        \draw (22) node[above]{$y_3$};
        \draw (23) node[above]{$y_4$};
        \draw (24) node[above]{$y_5$};
    \end{tikzpicture}}
\end{center}
Note $\rho(y_i)=2$ while $\rho(x_j)=1$. Using our recurrence and Lemma~\ref{lem:leafcount} we have that
\begin{align*}
\# \Tr(\A) &= b(\bot, 0)\\
&= b(x_1,0)\cdot b(x_2,0) \\
&= \left(b(y_1,0)b(y_2,0)b(y_3,0) + b(y_1,1)b(y_2,1)b(y_3,1) \right)\\ &\quad \cdot\left(b(y_4,0)b(y_5,0) +b(y_4,1)b(y_5,1) \right)\\
&=(3^3+2^3)(3^2+2^2)\\
&=455.
\end{align*}

More generally let $\A$ be an arborescence whose Hasse diagram is a tree of depth $2$, that is, $\rho(\A)=\{0,1,2\}$. Suppose $\A$ has $k$ elements $x_1, \dots,x_k$ of depth one, and for each $i$, $x_i$ has $j_i$ children. Then the same method shows
    \[
    \# \Tr(\A)= \prod_{i=1}^k (3^{j_i}+2^{j_i}).
    \]
\end{example}

\begin{example}\label{ex:catalan}
Recall $[n]$ denotes the ordered chain $0<1<\cdots<n$. From \cite{BBR1} we know transfer systems on $[n]$ are enumerated by the Catalan numbers. Explicitly \[\#\Tr([n])=\text{Cat}(n+1)=\frac{1}{n+2}\binom{2n+2}{n+1}.\] Since $[n]$ is an arborescence, we can use the recurrence from Theorem~\ref{thm:recur} to recover some interesting relations among the Catalan numbers.

We start by computing $b(\bot, 0)=b(0,0)$ for $[n]$ for small values of $n$. Note that each vertex is either a leaf or has exactly one child, so we do not have any products appearing from the recurrence relation. Also observe for $0\leq k\leq j<n$, Theorem~\ref{thm:recur} and that $\rho(j)=j$ gives us
    \begin{equation}\label{eq:chainsum}
    b(j,k)=\sum_{i=k}^{j}b(j+1,i)=b(j+1,k)+b(j+1,k+1)+\dots+b(j+1,j).
    \end{equation}
We use this formula together with the fact that $b(n,k)=n-k+1$ in $[n]$ to recursively compute in cases:
    \begin{align*}
    n=1\colon & \quad b(0,0)=b(1,0)=2.\\
    n=2\colon &  \quad b(0,0)=b(1,0)=b(2,0)+b(2,1) = 5.\\
    n=3\colon &\quad b(0,0) =b(2,0)+b(2,1)\\
    &\quad \quad \quad \hspace{0.09in}= (b(3,0)+b(3,1)+b(3,2)) + (b(3,1)+b(3,2))\\
    &\quad \quad \quad \hspace{0.09in}=b(3,0)+2b(3,1)+2b(3,2) = 14.\\
    n=4\colon& \quad b(0,0)=b(4,0)+3b(4,1)+5b(4,2)+5b(4,3) = 42.
    \end{align*} 
Notice the following pattern: if we want to find the coefficient of $b(n,k)$ in the formula for $b(0,0)$ for $[n]$, we sum up the coefficients of $b(n-1,i)$ for $i\leq k$ in the expression for $[n-1]$. This follows from (\ref{eq:chainsum}). We can organize these coefficients into a triangle whose entries are recursively defined as follows. The $n$th row has $n$ entries, and the leftmost entry is always a $1$. The remaining entries are found by summing all numbers in the previous row that are above and to the left. Note this is equivalent to summing the two numbers directly to the left and directly above. This triangle is well-known and often referred to as the ``Catalan triangle''. See Figure~\ref{fig:cattri}.

\begin{figure}[ht]
\begin{tabular}{c||ccccccccc}
$n \backslash k$& $0$ & $1$ & 2 & 3 & 4 & 5 & 6\\
\hline
\hline
    1& 1 &  \\
    2& 1 & 1\\
    3& 1 & 2 & 2\\
    4& 1 & 3 & 5 & 5\\
    5& 1 & 4 & 9 & 14 & 14\\
    6& 1 & 5 & 14 & 28 & 42& 42 \\
    7& 1 & 6 & 20 & 48 & 90 & 132 & 132\\
    8& 1 & 7 & 27 & 75 & 165 & 297 & 429 & 429 
\end{tabular}
\caption{The Catalan triangle. The $(n,k)$ entry is denoted $t(n,k)$ and is the coefficient of $b(n,k)$ in $\#\Tr([n])$.}
\label{fig:cattri}
\end{figure}
Write $t(n,k)$ for the numbers in this triangle. Observe the Catalan numbers are already appearing in this table as the last entry in each row with $t(n,n-1)=\Cat(n-1)$. We outline the proof of this standard fact. First one can show the entries $t(n,k)$ have the closed form
\[
t(n,k) 
= \frac{n-k}{n}\binom{n-1+k}{k}
= \frac{n-k}{n}\binom{n-1+k}{n-1}
\]
by noting $\frac{n-0}{n}\binom{n-1+0}{0}=1$ for all $n$ and that the function $(n,k)\mapsto\frac{n-k}{n}\binom{n-1+k}{k}$ satisfies the defining recurrence of the triangle
\[
t(n,k)+t(n+1,k-1)=t(n+1, k).
\]
Now we can use the closed form for $t(n,k)$ and the hockey stick identity for binomial coefficients to prove that
\[
t(n,n-1)=\sum_{k=0}^{n-2} t(n-1,k)=\frac{1}{n}\binom{2n-2}{n-1}=\text{Cat}(n-1).
\]

Let's go back to the formula coming out of our recurrence. The numbers in the triangle are the coefficients of $b(n,k)$ in the sum to compute $b(0,0)=\#\Tr([n])$. Thus
\begin{equation}\label{eq:trnsum}
\#\Tr([n])=\sum_{k=0}^{n-1}t(n,k)b(n,k) = \sum_{k=0}^{n-1}(n-k+1)t(n,k).
\end{equation}
For example when $n=4$ this formula is
\[
\#\Tr([4])=5t(4,0)+4t(4,1)+3t(4,2)+2t(4,3),
\]
and indeed $42=5\cdot 1+4\cdot 3+ 3\cdot 5+2\cdot 5$, which matches $\#\Tr([4])=\Cat(5)$.

We use properties of the Catalan triangle to show the sum in~\ref{eq:trnsum} is equal to $t(n+2,n+1)$, which as we saw above is exactly $\Cat(n+1)$. Organize the terms in the sum $\sum_{k=0}^{n-1}(n-k+1)t(n,k)$ into an array where the first row has $(n+1)$ copies of $t(n,0)$, the next row has $n$ copies of $t(n,1)$, and so on until the final row has $2$ copies of $t(n,n-1)$. Notice if we sum each column of the array we get the values shown below:
\[\begin{array}{cccccc}
    t(n,0) &t(n,0) &t(n,0) & \cdots & t(n,0) & t(n,0)  \\
     & t(n,1) &t(n,1) & \cdots & t(n,1) & t(n,1)\\
     &  &t(n,2) & \cdots & t(n,2) & t(n,2)\\
     & & &\ddots & \vdots & \vdots\\
     & & & & t(n,n-1) & t(n,n-1)\\
     \hline
     t(n+1,0) & t(n+1,1) & t(n+1,2) & \cdots & t(n+1,n-1) & t(n+1,n).
\end{array}\]
Now summing along the bottom row gives us $t(n+2,n+1)=\text{Cat}(n+1)$, as expected.
\end{example}

\begin{example}\label{ex:longfork}
Let $\mathcal{F}_n$ denote the arborescence whose Hasse diagram is the ``long-fork of depth $n$''. This looks like a chain of $n$ nodes, where the last internal node has two children that are the two leaves. See $\mathcal{F}_4$ below, where the edges flow to the right:
    \begin{center}
        \begin{tikzpicture}
            \node (0) at (0,0) {$\vdot$};
            \node (1) at (1,0) {$\vdot$};
            \node (2) at (2,0) {$\vdot$};
            \node (3) at (3,0) {$\vdot$};
            \node (l1) at (4,0.5) {$\vdot$};
            \node (l2) at (4,-0.5) {$\vdot$};
            \draw (0) node[below]{$0$};
            \draw (1) node[below]{$1$};
            \draw (2) node[below]{$2$};
            \draw (3) node[below]{$3$};
            \draw (l1) node[right]{$\ell$};
            \draw (l2) node[right]{$\ell'$};
            \draw[thick] (0.center)--(1.center)--(2.center)--(3.center)--(l1.center);
            \draw[thick] (l2.center)--(3.center);
        \end{tikzpicture}
    \end{center}
We can use the work in our previous example to compute the number of transfer systems on $\mathcal{F}_n$. Note for all $i$, $b(\ell, i)=b(\ell', i)$ and both are equal to $b(n,i)$ in $[n]$. Also $n-1$ has two covers in $\mathcal{F}_n$, instead of just one cover as it did in $[n]$. Thus the formula for $b(\bot, 0)$ in $\mathcal{F}_n$ is given by replacing each $b(n,i)$ with $b(n,i)^2$ in the formula for $[n]$. We have
    \[
    \#\Tr(\mathcal{F}_n)=\sum_{k=0}^{n-1}t(n,k)b(n,k)^2 = \sum_{k=0}^{n-1} \frac{(n-k)(n-k+1)^2}{n}\binom{n-1+k}{n-1}.
    \]
With some rewriting and clever use of identities, one can show
    \[
      \#\Tr(\mathcal{F}_n)=\sum_{k=0}^{n-1} \frac{(n-k)(n-k+1)^2}{n}\binom{n-1+k}{n-1} = \frac{2(5n+3)}{(n+2)(n+3)}\binom{2n+1}{n}.
    \]
Below are the first handful of numbers in this sequence, starting with $n=1$:
    \[4,~ 13,~ 42,~ 138,~ 462,~ 1573, ~5424, ~19006.\]
This formula also follows from the bijection given in Theorem~\ref{thm:bijcoalhis} and the result in \cite[Corollary 3.9]{rosenberg_counting_2007} for the number of coalescent histories on the ``pseudocaterpillar tree with $n+3$ taxa $Y_{n+3}$'' (here the observation is that $\mathcal{F}_n$ is obtained by removing all leaves from $Y_{n+3}$).
\end{example}

\begin{example}\label{ex:binary}
Let $\mathcal{T}_m$ be a full binary tree of depth $m$, so a tree where each vertex of rank less than $m$ has exactly two children and the leaves are $m$ edges from the root. Below are $\T_0$, $\T_1$, $\T_2$, and $\T_3$:
\begin{center}
\begin{tikzpicture}
    \node (0) at (0.2,0) {\begin{tikzpicture}[scale=0.5]
    \node (0) at (0,0) {$\vdot$};
    \end{tikzpicture}};
    \node (1) at (2,0) {\begin{tikzpicture}[scale=0.5]
    \node (0) at (0,0) {$\vdot$};
    \node (11) at (-1,1) {$\vdot$};
    \node (12) at (1,1) {$\vdot$};
    \draw(0.center)--(11.center);
    \draw (0.center)--(12.center);
    \end{tikzpicture}};
    \node (2) at (4,0) {\begin{tikzpicture}[scale=0.5]
    \node (0) at (0,0) {$\vdot$};
    \node (11) at (-1,1) {$\vdot$};
    \node (12) at (1,1) {$\vdot$};
    \node (21) at (-1.5,2) {$\vdot$};
    \node (22) at (-0.5,2) {$\vdot$};
    \node (23) at (1.5,2) {$\vdot$};
    \node (24) at (0.5,2) {$\vdot$};
    \draw(0.center)--(11.center)--(21.center);
    \draw (11.center)--(22.center);
    \draw (0.center)--(12.center)--(23.center);
    \draw (12.center)--(24.center);
    \end{tikzpicture}};
    \node (3) at (6,0) {\begin{tikzpicture}[scale=0.5]
    \node (0) at (0,0) {$\vdot$};
    \node (11) at (-1,1) {$\vdot$};
    \node (12) at (1,1) {$\vdot$};
    \node (21) at (-1.5,2) {$\vdot$};
    \node (22) at (-0.5,2) {$\vdot$};
    \node (24) at (1.5,2) {$\vdot$};
    \node (23) at (0.5,2) {$\vdot$};
    \node (31) at (-1.75,3) {$\vdot$};
    \node (32) at (-1.25,3) {$\vdot$};
    \node (33) at (-.75,3) {$\vdot$};
    \node (34) at (-.25,3) {$\vdot$};
    \node (38) at (1.75,3) {$\vdot$};
    \node (37) at (1.25,3) {$\vdot$};
    \node (36) at (.75,3) {$\vdot$};
    \node (35) at (.25,3) {$\vdot$};
    \draw(0.center)--(11.center)--(21.center)--(31.center);
    \draw (21.center)--(32.center);
    \draw (11.center)--(22.center)--(33.center);
    \draw (22.center)--(34.center);
    \draw (0.center)--(12.center)--(23.center)--(35.center);
    \draw (23.center)--(36.center);
    \draw (12.center)--(24.center)--(37.center);
    \draw (24.center)--(38.center);
    \end{tikzpicture}};
\end{tikzpicture}
\end{center}
Observe if $x$, $y$ are two vertices in $\T_m$ with $\rho(x)=\rho(y)$, then $b(x,n)= b(y,n)$ for all $n$. Thus we write $\ell_j$ for a generic vertex in $B_m$ of rank $j$. Our recurrence gives:
\begin{alignat*}{3}
&m=0:~~b(\bot,0)&&=b(\ell_0,0),\\
&m=1:~~b(\bot,0)&&=b(\ell_0,0)=(b(\ell_1,0))^2,\\
&m=2:~~b(\bot,0)&&=(b(\ell_1,0))^2=(b(\ell_2,0)^2+b(\ell_2,1)^2)^2\\
&m=3:~~b(\bot,0)&&=(b(\ell_2,0)^2+b(\ell_2,1)^2)^2\\
& &&=[(b(\ell_3,0)^2+b(\ell_3,1)^2+b(\ell_3,2)^2)^2+(b(\ell_3,1)^2+b(\ell_3,2)^2)^2]^2
\end{alignat*}
Note the pattern: to get the formula for $\T_m$ from the formula for $\T_{m-1}$, replace every term of the form $b(\ell_{m-1}, k)$ with $\sum_{j=k}^{m-1}b(\ell_m,j)^2$ (this holds from the recurrence and that $\ell_{m-1}$ has two covers). Using our formula in Lemma~\ref{lem:leafcount} we get:
\begin{align*}
\#\Tr(\T_0)&=1\\
\#\Tr(\T_1)&= 2^2,\\
\#\Tr(\T_2)&=(3^2+2^2)^2,\\ 
\#\Tr(\T_3)&=((4^2+3^2+2^2)^2+ (3^2+2^2)^2)^2,\\  \#\Tr(\T_4)&=([(5^2+4^2+3^2+2^2)^2+(4^2+3^2+2^2)^2 + (3^2+2^2)^2]^2+ \\
&\quad \quad[(4^2+3^2+2^2)^2+ (3^2+2^2)^2]^2)^2.
\end{align*}
In order to get from $\#\Tr(\T_m)$ from $\#\Tr(\T_{m-1})$, replace each term of the form $k^2$ by $\sum_{j=2}^{k+1} j^2$. This sequence grows very quickly, as seen by just considering the first five terms,
\[
1,~~ 4, ~~169,~~ 1020100, ~~270062420147776.
\]
We do not know of a closed form for this sequence. This sequence can be found in the Online Enyclopedia of Integer Sequences (OEIS) as entry A306402 \cite{oeisbintree}. This also counts the number of coalescent histories for full binary trees. See Appendix~\ref{ap:coalhis} and Theorem~\ref{thm:bijcoalhis} for connections between transfer systems and coalescent histories.
\end{example}

\section{Opposite sets through weak factorization systems}\label{sec:op}
Our convention thus far has been to partially order the vertex set of a rooted tree so that the root is the unique minimal element and the leaves are maximal elements. What happens if we reverse the ordering and try to enumerate transfer systems on the resulting ``upside down tree''?  To answer this, we more generally explore the relationship between the transfer system lattice of a poset and the transfer system lattice of its dual or opposite poset. 

In \cite[Theorem 4.21]{selfdual} the authors show that given any self-dual poset $\P$ (i.e. any poset $\P$ such that $\P^{op}\cong \P$), the lattice $\Tr(\P)$ is also self-dual. The proof uses constructions from category theory, and along the way they establish a bijection between transfer systems and weak factorization systems (see Definition~\ref{def:wfs}) on a lattice. In their paper they use version $(2')$ of the restriction condition as mentioned in Remark~\ref{rem:difres} (which is equivalent to $(2)$ for lattices). We use slight modifications of their argument now with version $(2)$ of the restriction condition to prove a bijection between transfer systems and weak factorization systems for any finite poset $\P$, as was suggested in \cite[Remark 2.4]{BMO2}. We then use this bijection to give an order reversing bijection from $\Tr(\P)$ to $\Tr(\P^{op})$. 

\begin{notation}
Let $R$ be a relation on a poset $\P$ that refines $\leq$. We will adopt the notation $x\to y$ whenever $x\leq y$, and we'll write $x\to y \in R$ whenever $x~R~y$. This notation is borrowed from category theory: given a poset $\P$, we can view $\P$ as a category where the elements of $\P$ are objects and the relations in $\P$ give morphisms. Then the relation $R$ corresponds to a subset of morphisms. 

We will write $R^{\op}$ for the relation on $\P^{\op}$ defined by 
\[
\Rop\coloneqq\{y \to x \mid x \to y \in R\}.
\]
Given a poset $(\P, \leq)$ we'll write $\P^{op}$ for the opposite poset $(\P, \leq^{op})$.
\end{notation}

Roughly speaking, a weak factorization system on a category is a collection of ``right'' and of ``left'' morphisms that can be used to factor any arrow in the category. Furthermore, these collections have nice lifting properties with respect to each other. We give the precise definition below. For a more thorough treatment on weak factorization systems and how they are used in homotopy theory see, for example, \cite[Section 14.1]{algtopbook}.

\begin{defn}
     Let $\P$ be a poset with $x,y\in \P$. We say that $x \to y$ has the \textbf{right lifting property} (rlp) with respect to $a \to b$ if $a \leq x$ and $b \leq y$ implies $b \leq x$. That is, if given the square below, we can find a ``lift'' as shown with the dashed line:
     \begin{center}
            \begin{tikzcd}[ampersand replacement=\&]
                a \arrow[r] 
                \& x \arrow[d] \\
                b \arrow[ur, dashrightarrow] \arrow[u, leftarrow]
                \& y \arrow[l, leftarrow]
            \end{tikzcd}
    \end{center}
In this case we say $a\to b$ has the \textbf{left-lifting property} with respect to $x\to y$.
    
\noindent Given a relation $R$ on $\P$ that refines $\leq$ define the \textbf{right lifting set} $R^{\lift}$ as the relation \[R^{\lift} = \{x \to y \mid x \to y \text{ has the rlp with respect to all } a \to b \in R\}.\]
We can similarly define the \textbf{left lifting set} $\llR$.
\end{defn}

\begin{defn}\label{def:wfs}
Let $\mathcal{C}$ be a category and suppose $R$ and $L$ are collections of morphisms from $\mathcal{C}$. We say $(L,R)$ is a \textbf{weak factorization system} on $\mathcal{C}$ if
\begin{enumerate}[leftmargin=*]
\item[(i)] for every morphism $f$ in $\mathcal{C}$, there exists $i\in L$ and $p\in R$ so that $f$ factors as $f=p\circ i$, and
\item[(ii)] $R=L^{\lift}$ and $L=\prescript{\lift}{}{R}$.
\end{enumerate}
We write $\wfs(\mathcal{C})$ for the collection of all weak factorization systems on $\mathcal{C}$.
\end{defn}

Let $\P$ be a finite poset. We can give $\wfs(\P)$ a partial order by defining $(L,R)\leq (L',R')$ if and only if $R\subseteq R'$. We first note the following relationship between weak factorization systems on $\P$ and $\P^{\op}$. 

\begin{prop}
Let $\P$ be a finite poset. Then we have an order-reversing bijection $\wfs(\P)\to \wfs(\P^{\op})$ given by $(L,R)\mapsto (R^{\op}, L^{\op})$.
\end{prop}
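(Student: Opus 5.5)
The plan is to check directly that the assignment $(L,R)\mapsto (R^{\op},L^{\op})$ sends weak factorization systems on $\P$ to weak factorization systems on $\P^{\op}$. We then show it is an involution, up to identifying $(\P^{\op})^{\op}$ with $\P$, and finally verify that it reverses the order. Everything rests on one duality lemma for lifting: $x\to y$ has the right lifting property with respect to $a\to b$ in $\P$ if and only if $b\to a$ (the opposite of $a\to b$) has the right lifting property with respect to $y\to x$ in $\P^{\op}$.

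To prove the lemma, unwind the definitions. In $\P$ the condition reads: $a\leq x$ and $b\leq y$ imply $b\leq x$. In $\P^{\op}$, the statement that $b\to a$ has the rlp against $y\to x$ means: $y\leq^{\op} b$ and $x\leq^{\op} a$ imply $x\leq^{\op} b$. Translating back to $\P$, this says $b\leq y$ and $a\leq x$ imply $b\leq x$, which is the same condition. Consequently, for any relation $S$ refining $\leq$ we have $(S^{\lift})^{\op}=\prescript{\lift}{}{(S^{\op})}$ and $(\prescript{\lift}{}{S})^{\op}=(S^{\op})^{\lift}$, where the lifting sets on the right are computed in $\P^{\op}$. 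This bookkeeping of which side is left and which is right is the only delicate step, and it must be checked carefully.

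With the lemma in hand, take $(L,R)\in\wfs(\P)$. For condition (ii), we get $(L^{\op})^{\lift}=(\prescript{\lift}{}{L})^{\op}$. It remains to know $\prescript{\lift}{}{L}=R$. The definition of a weak factorization system only gives $R=L^{\lift}$, so we first derive the dual statement. Note that $R=L^{\lift}$ and $L=\prescript{\lift}{}{R}$ hold by definition. From the lemma applied to $R$ we get $(R^{\op})^{\lift}=(\prescript{\lift}{}{R})^{\op}=L^{\op}$, and applied to $L$ we get $\prescript{\lift}{}{(L^{\op})}=(L^{\lift})^{\op}=R^{\op}$. These are exactly the two halves of (ii) for the pair $(R^{\op},L^{\op})$.

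For condition (i), take a morphism $y\to x$ in $\P^{\op}$, which is $x\to y$ in $\P$. Factor it as $x\to z\in L$ followed by $z\to y\in R$. Taking opposites gives $y\to z\in R^{\op}$ followed by $z\to x\in L^{\op}$, which is the required factorization with the left part in $R^{\op}$ and the right part in $L^{\op}$.

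Applying the same construction twice returns $(L,R)$, so the map is a bijection. For order reversal, suppose $(L,R)\leq (L',R')$, that is, $R\subseteq R'$. Since $L=\prescript{\lift}{}{R}$ and $\prescript{\lift}{}{(-)}$ is inclusion-reversing, we get $L'\subseteq L$, hence $L'^{\op}\subseteq L^{\op}$. Since the right class of the image pair is $L^{\op}$, this says $(R'^{\op},L'^{\op})\leq (R^{\op},L^{\op})$, as required.
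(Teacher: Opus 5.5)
Your proof is correct and is the definitional unwinding the paper has in mind when it says the result ``readily follows from the definitions''; the duality $(S^{\lift})^{\op}=\prescript{\lift}{}{(S^{\op})}$ and $(\prescript{\lift}{}{S})^{\op}=(S^{\op})^{\lift}$ does all the work, and your arguments for factorization, involutivity and order reversal are sound. One fix: delete the false start in your paragraph on condition (ii). The claim that it ``remains to know $\prescript{\lift}{}{L}=R$'' is false in general and is never used; the two identities $(R^{\op})^{\lift}=L^{\op}$ and $\prescript{\lift}{}{(L^{\op})}=R^{\op}$ that you derive right after it are exactly what is needed.
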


The proof readily follows from the definitions. Our next goal is to prove we have a map $\Tr(\P)\to \wfs(\P)$ defined by $R\mapsto (\prescript{\lift}{}{R}, R)$ and that this gives an isomorphism of lattices. With the proposition above, we will get that $\Tr(\P)^{\op}\cong \Tr(\P^{\op})$ as an easy corollary.

We start by proving two lemmas. The first was already shown to hold for lattices in \cite[Proposition 4.11]{selfdual}. Our proof closely follows theirs, except we need to be careful about using pseudo-meets instead of meets.

\begin{lemma} \label{lem:factoring}
Given a finite poset $\P$ and $R \in \Tr(\P)$, we can factor any $x \to y \in \P$ as $x \to x' \to y$ where $x \to x' \in \llR$ and $x' \to y \in R$.
\end{lemma}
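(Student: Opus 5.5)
Fix $x\to y$ in $\P$. I will take $x'$ to be minimal in the nonempty set $S=\{z\in\P : x\leq z,\ z~R~y\}$; this set contains $y$ by reflexivity and is finite. By construction $x\to x'$ and $x'\to y\in R$. It remains to show $x\to x'\in\llR$. By definition this means: for every $a\to b\in R$ with $x\leq a$ and $x'\leq b$, we must have $x'\leq a$.

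So suppose $a~R~b$, $x\leq a$ and $x'\leq b$. First, transitivity with $x'~R~y$ is not directly available, so I will instead use the restriction condition. Apply Definition~\ref{def:transf}(2) to $a~R~b$ and $x'\leq b$. This gives $m~R~x'$ for every pseudo-meet $m$ of $a$ and $x'$. Now $x$ is a common lower bound of $a$ and $x'$. Among the common lower bounds of $a$ and $x'$ that lie above $x$, pick a maximal one, $m$. This is possible by finiteness.

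The key step is to check that $m$ is a pseudo-meet of $a$ and $x'$. Let $z\leq a$, $z\leq x'$ with $z$ comparable to $m$. If $z\leq m$ we are done. If $m\leq z$, then $z\geq x$ is a common lower bound above $x$, so maximality forces $z=m$. Hence $m$ is a pseudo-meet, and so $m~R~x'$. Transitivity with $x'~R~y$ then gives $m~R~y$. Since $x\leq m$, we have $m\in S$ and $m\leq x'$, so minimality of $x'$ gives $m=x'$. Therefore $x'=m\leq a$, which is the required lift.

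The main obstacle is exactly this pseudo-meet step. A meet need not exist, so the argument of \cite[Proposition 4.11]{selfdual} must be adapted by choosing the lower bound maximal \emph{above $x$}. This choice simultaneously makes it a pseudo-meet and keeps it in $S$, so that minimality of $x'$ can be used.
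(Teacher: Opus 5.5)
Your proof is correct and uses the same key step as the paper: a maximal common lower bound lying above $x$ is automatically a pseudo-meet, so the restriction condition together with transitivity produces an element of $S=\{z : x\leq z,\ z~R~y\}$ below the current one. The paper applies this step iteratively, descending from $y$ until the arrow from $x$ lies in the left lifting set. You instead make a single choice of $x'$ minimal in $S$, which makes termination automatic and shows that every minimal element of $S$ gives a valid factorization.
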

\begin{proof}
Suppose first $x \to y \in \llR$. Note $y \to y \in R$  because $R$ contains all reflexive relations, and thus we can just use the factorization of $x \to y$ as $x \to y \to y$.

Now suppose $x \to y \notin \llR$. Then there exists $z \to w \in R$ such that $x \to y$ does not have the left lifting property with respect to $z \to w$. This is illustrated in the diagram
    \begin{center}
        \begin{tikzcd}
            x \arrow[r] 
            & z \arrow[d, "R"] \\
            y \arrow[r] \arrow[ur, "\times" description, rightsquigarrow] \arrow[u, leftarrow]
            & w.
        \end{tikzcd}
    \end{center}
Hence $y \nleq z$. Since $\P$ is not necessarily a lattice, the meet of $y$ and $z$ need not exist. Instead fix $m$ to be a maximal element in the set \[\{p\in \P \mid p \leq y,~ p \leq z, \text{ and } x\leq p\}.\] Note $x$ is an element of this finite set, so we know such a maximal element exists. If the meet of $y$ and $z$ does exist, then we just get $m=y\meet z$. Otherwise $m$ is a pseudo-meet. 
    
By the restriction condition of $R$ we have $m~ R~y$. By construction of $m$ we can factor $x \to y$ as $x \to m \to y$. See the diagram below:
    \begin{center}
        \begin{tikzcd}
            x \arrow[drr, bend left] \arrow[ddr, bend right] \arrow[dr] & & \\[-1.5em] 
            & m \arrow[r]
            & z \arrow[d, "R"] \\
            & y \arrow[r] \arrow[u, "R", leftarrow]
            & w.
        \end{tikzcd}
    \end{center}
Now if $x \to m \in \llR$, then we have factored $x \to y$ as desired. Otherwise, we can repeat this process with $x \to m$ in place of $x \to y$. Note $R$ is transitive, so a factorization of $x\to m$ will also yield a factorization of $x\to y$.
    
Observe $y\nleq z$ implies $m\neq y$, so it must be $m < y$ in $P$. By repeating the process above, we will eventually have a desired factoring of $x\to y$ or we will reach a factoring $x\to b\overset{R}{\to} y$ where $b$ is a minimal element. In this case $x \to b$ implies $x = b$ so $x \to b$ is a reflexive relation, which is in $\llR$.
\end{proof}

\begin{remark}
Observe if $\P$ is a poset with elements that have pseudo-meet(s) but no honest meet, then the above proof only works if we use the restriction condition $(2)$ from Definition~\ref{def:transf} as opposed to $(2')$ from Remark~\ref{rem:difres}. 
\end{remark}

\begin{lemma} \label{lem:leftrightlift}
Let $\P$ be a finite poset. Given $R \in \Tr(\P)$, we have that $\lRl = R$.
\end{lemma}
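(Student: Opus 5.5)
We prove the two inclusions separately. The inclusion $R\subseteq \lRl$ is formal and holds for any relation $R$ refining $\leq$. By definition, every $a\to b\in \llR$ has the left lifting property with respect to every element of $R$. Equivalently, every $x\to y\in R$ has the right lifting property with respect to every element of $\llR$. Hence $x\to y\in \lRl$.

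For the reverse inclusion $\lRl\subseteq R$ we use Lemma~\ref{lem:factoring}, following the retract-style argument of \cite{selfdual}. Fix $x\to y\in\lRl$. By Lemma~\ref{lem:factoring} we factor it as $x\to x'\to y$ with $x\to x'\in \llR$ and $x'\to y\in R$. Now consider the lifting square
\begin{center}
\begin{tikzcd}
x \arrow[r] \arrow[d] & x \arrow[d] \\
x' \arrow[r] & y,
\end{tikzcd}
\end{center}
whose top arrow is the identity on $x$, left arrow is $x\to x'$, right arrow is $x\to y$ and bottom arrow is $x'\to y$. Here $x\to x'\in\llR$ and $x\to y\in\lRl$, so the right lifting property applies. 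In the paper's terms, take $a=x$, $b=x'$ against $x\to y$: we have $a\leq x$ and $b=x'\leq y$, so the lift gives $x'\leq x$. Combined with $x\leq x'$, antisymmetry gives $x=x'$. Therefore $x\to y$ equals $x'\to y$, which lies in $R$.

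The main obstacle is really the existence of the factorization, and Lemma~\ref{lem:factoring} already supplies it. That lemma is also the only place where the pseudo-meet version $(2)$ of the restriction condition is needed. Once it is available, the remaining step is this short antisymmetry argument, which works in any poset. The only things to check are that the lifting square commutes, which is automatic in a poset category, and that the definitions of $R^{\lift}$ and $\llR$ are applied in the correct direction.
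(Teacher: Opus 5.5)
Your proof is correct and follows the paper's argument essentially verbatim: the inclusion $R\subseteq \lRl$ is formal, and for the reverse inclusion you factor $x\to y$ as $x\to x'\to y$ via Lemma~\ref{lem:factoring}. You then lift in the square with identity top edge to get $x'\leq x$, hence $x=x'$ and $x~R~y$.
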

\begin{proof}
First note that $R \subseteq \lRl$ directly from the definition of left and right lifting sets. To show that $\lRl \subseteq R$, fix $x \to y \in \lRl$. By Lemma \ref{lem:factoring}, there exists $x' \in \P$ so that we can factor $x \to y$ as $x \to x' \to y$ where $x \to x' \in \llR$ and $x' \to y \in R$. Consider the following commutative square:
    \begin{center}
        \begin{tikzcd}
            x \arrow[r] 
            & x \arrow[d, "\lRl"] \\
            x' \arrow[ur, dashrightarrow] \arrow[u, "\llR", leftarrow]
            & y. \arrow[l, "R", leftarrow]
        \end{tikzcd}
    \end{center}
Since $x \to y \in \lRl$, we know there must exist a lift $x' \to x$ in $\P$. So we have $x \leq x'$ and $x' \leq x$ which implies $x = x'$. Thus since $x'~R~y$ we have that $x~R~y$. Hence $\lRl \subseteq R$, so $\lRl = R$.
\end{proof}

The two lemmas imply that we have a function $\Tr(\P)\to \wfs(\P)$, $R\mapsto \llR$. We next show there is an inverse function $\wfs(\P)\to\Tr(\P)$ defined by $(L,R)\mapsto R$.

\begin{lemma}\label{lem:wfstransf}
Suppose $(L,R)$ is a weak factorization system on $\P$. Then $R$ is a transfer system on $\P$.
\end{lemma}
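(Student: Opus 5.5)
We must check that $R$, which equals $L^{\lift}$, is reflexive, transitive, refines $\leq$, and satisfies restriction condition (2). Each property comes from $R$ being a right lifting set, together with the factorization axiom.

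\emph{Refinement, reflexivity, transitivity.} By convention lifting sets consist of arrows $x\to y$ with $x\le y$, so $R$ refines $\leq$ automatically. For reflexivity, an identity $x\to x$ has the rlp with respect to any $a\to b$: if $a\le x$ and $b\le x$, then $b\le x$ trivially. Hence $x\to x\in L^{\lift}=R$.

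For transitivity, suppose $x\to y\in R$ and $y\to z\in R$, and let $a\to b\in L$ with $a\le x$ and $b\le z$. Since $a\le x\le y$ and $b\le z$, the rlp of $y\to z$ gives $b\le y$. Then $a\le x$ and $b\le y$, so the rlp of $x\to y$ gives $b\le x$. Thus $x\to z\in L^{\lift}=R$. In categorical terms, right lifting sets are closed under composition.

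\emph{Restriction condition.} Suppose $x\,R\,z$, $y\le z$, and $m$ is a pseudo-meet of $x$ and $y$. We want $m\to y\in R$. Use axiom (i) to factor $m\to y$ as $m\to m'\to y$ with $m\to m'\in L$ and $m'\to y\in R$. It then suffices to show $m'=m$.

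Apply the rlp of $x\to z\in R$ to the square with left arrow $m\to m'\in L$. We have $m\le x$, and $m'\le y\le z$, so lifting gives $m'\le x$. Now $m'\le x$, $m'\le y$, and $m'$ is comparable to $m$ because $m\le m'$. The pseudo-meet property then forces $m'\le m$, so $m'=m$ and $m\,R\,y$.

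This step is the heart of the argument and the only place where the pseudo-meet form (2) of the restriction condition matters. The comparability clause in Definition~\ref{def:pseudomeet} is exactly what the factorization supplies, since $m\le m'$ by construction. With (2') and a genuine meet the same argument would go through more directly.

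One could alternatively try to derive the restriction condition from Lemma~\ref{lem:leftrightlift}, but that lemma presupposes $R$ is a transfer system. The direct factorization argument above avoids this circularity.
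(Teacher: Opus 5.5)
Your proof is correct and follows the paper's argument: you factor $m\to y$ as an $L$-map followed by an $R$-map, lift against $x\to z\in R$ to get $m'\le x$, and then use the pseudo-meet property (with comparability coming from $m\le m'$) to conclude $m'=m$. You also write out the reflexivity and transitivity of the right lifting set explicitly, which the paper only describes as straightforward to check.
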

\begin{proof}
First note $R$ refines $\leq$ by definition. We also have $R=L^{\lift}$, and it is straightforward to check any right-lifting set contains all identity morphisms and is closed under composition (that is, $R$ is reflexive and transitive). We only need to verify the restriction condition from Definition~\ref{def:transf}.

Suppose $x\to z\in R$ and $y\leq z$. Let $m$ be a pseudo-meet of $x$ and $y$. We must show $m\to y\in R$. By definition of weak factorization system we can factor $m\to y$ as $m\to w \to y$ where $m\to w\in L$ and $w\to y\in R$ (this implies $m\leq w \leq y$). Now since $x\to z\in R$, we get a lift $w\leq x$ as shown below
\begin{center}
        \begin{tikzcd}
            m\arrow[d,"L" left]\arrow[r]&x\arrow[d,"R"]\\
            w\arrow[ru,dashed]\arrow[r]&z.
        \end{tikzcd}
\end{center}
We thus have $w\leq x$, $w\leq y$, and $m\leq w$, and so $m=w$ by definition of pseudo-meet. In particular, $m\to y\in R$ as desired.
\end{proof}

\begin{thm}\label{thm:wfs}
Let $\P$ be a finite poset. Then the function $\Tr(\P)\to \wfs(\P)$ given by $R\mapsto (\prescript{\lift}{}{R}, R)$ is an isomorphism of posets. 
\end{thm}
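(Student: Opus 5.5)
The plan is to show that the map $\Phi\colon \Tr(\P)\to \wfs(\P)$, $R\mapsto (\llR, R)$, is well defined, that it is a bijection with inverse $\Psi\colon (L,R)\mapsto R$, and that both $\Phi$ and $\Psi$ preserve order.

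\emph{Well-definedness.} Fix $R\in\Tr(\P)$ and check the two axioms of Definition~\ref{def:wfs} for the pair $(\llR,R)$.
\begin{itemize}
\item Axiom (i) is exactly Lemma~\ref{lem:factoring}: every $x\to y$ factors as $x\to x'\to y$ with $x\to x'\in\llR$ and $x'\to y\in R$.
\item For axiom (ii), the equality $L=\prescript{\lift}{}{R}$ holds by definition of $L=\llR$.
\item The equality $R=L^{\lift}=\lRl$ is Lemma~\ref{lem:leftrightlift}.
\end{itemize}
So $\Phi(R)\in\wfs(\P)$.

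\emph{Bijectivity.} Conversely, Lemma~\ref{lem:wfstransf} says that for $(L,R)\in\wfs(\P)$ the relation $R$ is a transfer system, so $\Psi$ lands in $\Tr(\P)$. Clearly $\Psi\circ\Phi=\id$. For $\Phi\circ\Psi=\id$, take $(L,R)\in\wfs(\P)$. Axiom (ii) forces $L=\llR$, so $\Phi(R)=(\llR,R)=(L,R)$. The key point is that in a weak factorization system the left class is determined by the right class. Hence $\Phi$ and $\Psi$ are mutually inverse bijections.

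\emph{Order.} The order on $\wfs(\P)$ was defined by $(L,R)\le(L',R')$ iff $R\subseteq R'$, which matches refinement on $\Tr(\P)$. Thus $R\subseteq R'$ if and only if $\Phi(R)\le\Phi(R')$, so both $\Phi$ and $\Psi$ are monotone and $\Phi$ is a poset isomorphism.

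I do not expect a serious obstacle here: the substantive work, namely the factorization via pseudo-meets and the identity $\lRl=R$, is already contained in Lemmas~\ref{lem:factoring} and~\ref{lem:leftrightlift}. The only point that needs explicit care is justifying $\Phi\circ\Psi=\id$, that is, noting that the definition of a weak factorization system pins down $L$ as $\llR$, so no information is lost by forgetting $L$.
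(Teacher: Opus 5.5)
Your proof is correct and follows essentially the same route as the paper. Well-definedness comes from Lemmas~\ref{lem:factoring} and~\ref{lem:leftrightlift}, the inverse $(L,R)\mapsto R$ comes from Lemma~\ref{lem:wfstransf}, and monotonicity holds because $\wfs(\P)$ is ordered by inclusion of the right class; your remark that axiom (ii) forces $L=\llR$ just spells out why the two maps are ``clearly inverses.''
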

\begin{proof}
It follows from Lemmas~\ref{lem:factoring} and \ref{lem:leftrightlift} that $(\prescript{\lift}{}{R}, R)$ is a weak factorization system on $\P$ for any transfer system $R$. Thus we have an order-preserving function $\Tr(\P)\to \wfs(\P)$ given by $R\mapsto (\prescript{\lift}{}{R}, R)$. By Lemma~\ref{lem:wfstransf} we also have an order-preserving function $\wfs(\P)\to \Tr(\P)$ given by $(L,R)\mapsto R$. These two functions are clearly inverses of one another. We conclude $\Tr(\P)\cong \wfs(\P)$.
\end{proof}

Returning to our question at the start of the section, we now get the promised isomorphism between $\Tr(\P)^{\op}$ and $\Tr(\P^{\op})$.

\begin{cor}\label{cor:oppable}
For any finite poset $\P$ we have that $\Tr(\P^{\op})\cong \Tr(\P)^{\op}$ as lattices. Explicitly, we have an order-reversing bijection $\gamma_\P\colon \Tr(\P)\to \Tr(\P^{\op})$ defined by $\gamma_\P(R)=(\prescript{\lift}{}{R})^{\op}=(R^{\lift})^{\op}$. In particular, $\#\Tr(\P)=\#\Tr(\P^{\op})$.
\end{cor}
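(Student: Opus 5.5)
The plan is to compose three order-isomorphisms or anti-isomorphisms already available. First, by Theorem~\ref{thm:wfs}, the map $R\mapsto(\llR,R)$ is an isomorphism of posets $\Tr(\P)\cong\wfs(\P)$. Its inverse is $(L,R)\mapsto R$, which is also order-preserving since $\wfs(\P)$ is ordered by inclusion of right classes. Second, the preceding proposition gives an order-reversing bijection $\wfs(\P)\to\wfs(\P^{\op})$, $(L,R)\mapsto(R^{\op},L^{\op})$. Third, Theorem~\ref{thm:wfs} applied to the finite poset $\P^{\op}$ gives $\wfs(\P^{\op})\cong\Tr(\P^{\op})$ via $(L',R')\mapsto R'$. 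Composing these, $R\mapsto(\llR,R)\mapsto(R^{\op},(\llR)^{\op})\mapsto(\llR)^{\op}$. This is an order-reversing bijection $\gamma_\P$, and it has exactly the stated formula $\gamma_\P(R)=(\llR)^{\op}$.

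Before composing, I will justify that the middle map is really order-reversing. The proposition is stated as such, but this is the one point worth checking. Suppose $R\subseteq R'$ with $(L,R),(L',R')\in\wfs(\P)$. Then $L={}^{\lift}R\supseteq{}^{\lift}R'=L'$, because a larger set imposes more lifting conditions. Hence $L'^{\op}\subseteq L^{\op}$, and since the right class of the image is $L^{\op}$, this is exactly reversal of order.

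For the alternative description $(\llR)^{\op}=(R^{\lift})^{\op}$ I should be careful: as written it looks suspicious, since in general $\llR\neq R^{\lift}$. I would interpret the intended second expression as the right lifting set computed in $\P^{\op}$, namely $(R^{\op})^{\lift}$ taken in $\P^{\op}$. Unwinding the definition, a square in $\P^{\op}$ is a square in $\P$ with all arrows reversed. Thus $y\to x$ in $\P^{\op}$ has the rlp against $b\to a$ (with $a\to b\in R$) exactly when $x\to y$ has the llp against $a\to b$ in $\P$. This gives $(R^{\op})^{\lift}=(\llR)^{\op}$, which is a direct check from the definitions. I will state this identification explicitly and verify it.

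Finally, the lattice statement follows formally. An order-reversing bijection whose inverse is also order-reversing is an isomorphism $\Tr(\P)^{\op}\cong\Tr(\P^{\op})$ of posets, and poset isomorphisms between lattices preserve meets and joins. The cardinality equality $\#\Tr(\P)=\#\Tr(\P^{\op})$ is then immediate. The main (mild) obstacle is just the bookkeeping of ops and lifting sides in the middle step and the formula. No new combinatorics is needed, since the real work is in Lemmas~\ref{lem:factoring}, \ref{lem:leftrightlift}, and \ref{lem:wfstransf}.
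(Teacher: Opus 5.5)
Your proposal is correct and follows the paper's route. The paper obtains the corollary by composing the isomorphism $\Tr(\P)\cong\wfs(\P)$ of Theorem~\ref{thm:wfs}, the order-reversing bijection $\wfs(\P)\to\wfs(\P^{\op})$, and Theorem~\ref{thm:wfs} for $\P^{\op}$, giving $\gamma_\P(R)=(\llR)^{\op}$. You are also right that the second formula should be read as the right lifting set $(R^{\op})^{\lift}$ computed in $\P^{\op}$: the literal $(R^{\lift})^{\op}$ differs in general, and the paper itself writes $\gamma_\P(R)$ as $(R^{\op})^{\lift}$ in Proposition~\ref{prop:flipcomp}.
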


\begin{remark}\label{rem:difresag}
Recall the two versions of the restriction condition discussed in Remark~\ref{rem:difres}. As mentioned, if we take version $(2')$ instead of $(2)$ then $\Tr(\P)\neq \# \Tr(\P^{\op})$ where $\P$ is the poset whose Hasse diagram is shown below:
    \begin{center}
    \begin{tikzpicture}
    \node (a) at (0,0) {$\bullet$};
    \node (b) at (1,0) {$\bullet$};
    \node (c) at (0,1) {$\bullet$};
    \node (d) at (1,1) {$\bullet$};
    \node (e) at (0.5,1.5) {$\bullet$};
    \draw[thick] (d.center)--(a.center)--(c.center)--(e.center)--(d.center)--(b.center)--(c.center);
    \draw (a) node[left]{$m_1$};
    \draw (b) node[right]{$m_2$};
    \draw (c) node[left]{$x$};
    \draw (d) node[right]{$y$};
    \draw (e) node[above]{$\top$};
    \end{tikzpicture}  
    \end{center}
For this poset $\P$ we also do not have a function $\Tr(\P)\to \wfs(\P)$ defined by $R\mapsto (\llR, R)$ if we use $(2')$ as the restriction condition. For example, take $R$ to be the transfer system (according to $(2')$) whose only non-reflexive relation is $x\to \top$. Then one can verify $\lRl$ contains both $m_1\to y$ and $m_2\to y$, so $R$ cannot be the right-lifting set in a weak factorization system because $\lRl\neq R$. 
\end{remark}

In general, the function $\gamma_P$ in Corollary~\ref{cor:oppable} can be somewhat complicated to compute. But we record the following property of $\gamma_\P$. Roughly speaking, this states that, ignoring reflexive relations, the graph of $\gamma_\P(R)$ is contained in the ``flipped complement'' of the graph of $R$, i.e. the non-reflexive relations in $\gamma_\P(R)$ are contained in $(R^{op})^c$.
\begin{prop}\label{prop:flipcomp}
    Let $\P$ be a poset and $R$ be a transfer system on $\P$. If $x,y$ are distinct elements in $\P$ with $y\to x \in \gamma_\P(R)$, then $x\to y \not \in R$. 
\end{prop}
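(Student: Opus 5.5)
We unwind the definition of $\gamma_\P$ from Corollary~\ref{cor:oppable} and argue by contradiction. By definition, $\gamma_\P(R)=(\llR)^{\op}$. So if $y\to x\in\gamma_\P(R)$ as a relation on $\P^{\op}$, then $x\to y\in\llR$ as a relation on $\P$. In particular $x\leq y$ in $\P$, and since $x\neq y$ we have $x<y$.

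Suppose, for contradiction, that $x\to y\in R$ as well. Since $x\to y\in\llR$, the morphism $x\to y$ has the left lifting property with respect to every morphism of $R$, and in particular with respect to itself. We apply this to the commutative square
\begin{center}
\begin{tikzcd}
x \arrow[r] \arrow[d,"\llR" left] & x \arrow[d,"R"] \\
y \arrow[r] \arrow[ur,dashed] & y
\end{tikzcd}
\end{center}
in which the top map is the identity on $x$ and the bottom map is the identity on $y$. This square is legitimate because $a\leq x$ and $b\leq y$ hold with $a=x$ and $b=y$.

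The lifting property then produces the dashed arrow $y\to x$, that is, $y\leq x$. Together with $x\leq y$, antisymmetry gives $x=y$, contradicting distinctness. Hence $x\to y\notin R$.

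The only point needing care is the direction of the relation. Unwinding the $\op$ correctly is exactly what lets us test $x\to y$ against itself: $y\to x$ in $\P^{\op}$ must be read as $x\to y$ in $\P$. The remaining steps are immediate from the definition of the lifting property, so this bookkeeping is the main obstacle.

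We could instead use the other description $\gamma_\P(R)=(R^{\lift})^{\op}$. Then $x\to y$ would lie in $R^{\lift}$ and would lift against itself in the same way. This alternative description is not needed for the proof.
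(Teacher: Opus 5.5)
Your proof is correct and is the paper's argument read in $\P$ instead of $\P^{\op}$: the paper lifts $y\to x\in R^{\op}$ against $y\to x\in\gamma_\P(R)=(R^{\op})^{\lift}$ inside $\P^{\op}$, which is the mirror image of your square lifting $x\to y\in\llR$ against $x\to y\in R$, and in both cases the lift forces $x=y$. Your closing aside is harmless, but it rests on a typo in Corollary~\ref{cor:oppable}: the valid alternative description is $(R^{\op})^{\lift}$ computed in $\P^{\op}$, not $(R^{\lift})^{\op}$, because $R^{\lift}\neq\llR$ in general (on $[2]$ with $R$ generated by $0\to 1$, up to identities one gets $\llR=\{0\to 2,\,1\to 2\}$ but $R^{\lift}=\{1\to 2\}$).
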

\begin{proof}
Let $x,y$ be distinct elements so that $y\to x\in \gamma_P(R)$, and suppose contrary $x\to y\in R$. Then since $y\to x\in R^{op}$, we'd have the following lifting diagram
    \begin{center}
            \begin{tikzcd}[ampersand replacement=\&]
                y \arrow[r] \arrow[d,"R^{op}" left]
                \& y \arrow[d,"\gamma_P(R)=(R^{\op})^{\lift}"] \\
                x \arrow[ur, dashrightarrow]
                \& x. \arrow[l, leftarrow]
            \end{tikzcd}
    \end{center}
But this implies $x=y$, which is a contradiction.
\end{proof}
There are many examples where $\gamma_P$ maps a transfer system $R$ exactly to its flipped complement, but this is not always the case. See Example~\ref{ex:yop} for both examples and non-examples.

\begin{example}\label{ex:yop}
Let $Y$ denote the four-element poset whose Hasse diagram looks like the letter $Y$.
Figure~\ref{fig:yop} shows the lattice of transfer systems on $Y$ on the left and the lattice of transfer systems on the upside down $Y$ ($Y^{\op}$) on the right. Note $\Tr(Y^{\op})$ is drawn with the opposite orientation, so we have instead labeled the column $\Tr(Y^{\op})^{\op}$. The lattices are arranged so that the image under $\gamma_Y$ of a transfer system on the left is shown in the corresponding spot on the right. The two blue transfer systems are the only ones not mapped to their flipped compliment, ignoring the reflexive relations (see Proposition~\ref{prop:flipcomp}).

\begin{figure}[ht]
\begin{tabular}{c|c}
$\Tr(Y)$ & $\Tr(Y^{\op})^{\op}$\\
\hline
\\
\scalebox{0.77}{
\begin{tikzpicture}
    \node (1) at (0,1){
    \begin{tikzpicture}[scale=0.3]
        \node (a1) at (-1,2) {\vdot};
        \node (a2) at (1,2) {\vdot};
        \node (b) at (0,1) {\vdot};
        \node (c) at (0,0) {\vdot};
        \draw (-1.5,2.5)--(-1.7,2.5)--(-1.7,-0.5)--(-1.5,-0.5);
        \draw (1.5,2.5)--(1.7,2.5)--(1.7,-0.5)--(1.5,-0.5);
    \end{tikzpicture}
    };
    \node (2) at (-2,2.5){
    \begin{tikzpicture}[scale=0.3]
        \node[blue] (a1) at (-1,2) {\vdot};
        \node[blue] (a2) at (1,2) {\vdot};
        \node[blue] (b) at (0,1) {\vdot};
        \node[blue] (c) at (0,0) {\vdot};
        \draw[blue] (b.center)--(a1.center);
        \draw[blue] (-1.5,2.5)--(-1.7,2.5)--(-1.7,-0.5)--(-1.5,-0.5);
        \draw[blue] (1.5,2.5)--(1.7,2.5)--(1.7,-0.5)--(1.5,-0.5);
    \end{tikzpicture}
    };
    \node (3) at (2,2.5){
    \begin{tikzpicture}[scale=0.3]
        \node[blue] (a1) at (-1,2) {\vdot};
        \node[blue] (a2) at (1,2) {\vdot};
        \node[blue] (b) at (0,1) {\vdot};
        \node[blue] (c) at (0,0) {\vdot};
        \draw[blue] (b.center)--(a2.center);
        \draw[blue] (-1.5,2.5)--(-1.7,2.5)--(-1.7,-0.5)--(-1.5,-0.5);
        \draw[blue] (1.5,2.5)--(1.7,2.5)--(1.7,-0.5)--(1.5,-0.5);
    \end{tikzpicture}
    };
    \node (4) at (0,4){
    \begin{tikzpicture}[scale=0.3]
        \node (a1) at (-1,2) {\vdot};
        \node (a2) at (1,2) {\vdot};
        \node (b) at (0,1) {\vdot};
        \node (c) at (0,0) {\vdot};
        \draw (b.center)--(a1.center);
        \draw (b.center)--(a2.center);
        \draw (-1.5,2.5)--(-1.7,2.5)--(-1.7,-0.5)--(-1.5,-0.5);
        \draw (1.5,2.5)--(1.7,2.5)--(1.7,-0.5)--(1.5,-0.5);
    \end{tikzpicture}
    };
    \node (5) at (0,6){
    \begin{tikzpicture}[scale=0.3]
        \node (a1) at (-1,2) {\vdot};
        \node (a2) at (1,2) {\vdot};
        \node (b) at (0,1) {\vdot};
        \node (c) at (0,0) {\vdot};
        \draw (c.center)--(b.center);
        \draw (-1.5,2.5)--(-1.7,2.5)--(-1.7,-0.5)--(-1.5,-0.5);
        \draw (1.5,2.5)--(1.7,2.5)--(1.7,-0.5)--(1.5,-0.5);
    \end{tikzpicture}
    };
    \node (6) at (-1.75,7.5){
    \begin{tikzpicture}[scale=0.3]
        \node (a1) at (-1,2) {\vdot};
        \node (a2) at (1,2) {\vdot};
        \node (b) at (0,1) {\vdot};
        \node (c) at (0,0) {\vdot};
        \draw (c.center)--(b.center);
        \draw (c.center)--(a1.center);
        \draw (-1.5,2.5)--(-1.7,2.5)--(-1.7,-0.5)--(-1.5,-0.5);
        \draw (1.5,2.5)--(1.7,2.5)--(1.7,-0.5)--(1.5,-0.5);
    \end{tikzpicture}
    };
    \node (7) at (1.75,7.5){
    \begin{tikzpicture}[scale=0.3]
        \node (a1) at (-1,2) {\vdot};
        \node (a2) at (1,2) {\vdot};
        \node (b) at (0,1) {\vdot};
        \node (c) at (0,0) {\vdot};
        \draw (c.center)--(b.center);
        \draw (c.center)--(a2.center);
        \draw (-1.5,2.5)--(-1.7,2.5)--(-1.7,-0.5)--(-1.5,-0.5);
        \draw (1.5,2.5)--(1.7,2.5)--(1.7,-0.5)--(1.5,-0.5);
    \end{tikzpicture}
    };
    \node (8) at (-3.5,9){
    \begin{tikzpicture}[scale=0.3]
        \node (a1) at (-1,2) {\vdot};
        \node (a2) at (1,2) {\vdot};
        \node (b) at (0,1) {\vdot};
        \node (c) at (0,0) {\vdot};
        \draw (c.center)--(b.center)--(a1.center);
        \draw (c.center)--(a1.center);
        \draw (-1.5,2.5)--(-1.7,2.5)--(-1.7,-0.5)--(-1.5,-0.5);
        \draw (1.5,2.5)--(1.7,2.5)--(1.7,-0.5)--(1.5,-0.5);
    \end{tikzpicture}
    };
    \node (9) at (0,9){
    \begin{tikzpicture}[scale=0.3]
        \node (a1) at (-1,2) {\vdot};
        \node (a2) at (1,2) {\vdot};
        \node (b) at (0,1) {\vdot};
        \node (c) at (0,0) {\vdot};
        \draw (c.center)--(b.center);
        \draw (c.center)--(a1.center);
        \draw (c.center)--(a2.center);
        \draw (-1.5,2.5)--(-1.7,2.5)--(-1.7,-0.5)--(-1.5,-0.5);
        \draw (1.5,2.5)--(1.7,2.5)--(1.7,-0.5)--(1.5,-0.5);
    \end{tikzpicture}
    };
    \node (10) at (3.5,9){
    \begin{tikzpicture}[scale=0.3]
        \node (a1) at (-1,2) {\vdot};
        \node (a2) at (1,2) {\vdot};
        \node (b) at (0,1) {\vdot};
        \node (c) at (0,0) {\vdot};
        \draw (c.center)--(b.center)--(a2.center);
        \draw (c.center)--(a2.center);
        \draw (-1.5,2.5)--(-1.7,2.5)--(-1.7,-0.5)--(-1.5,-0.5);
        \draw (1.5,2.5)--(1.7,2.5)--(1.7,-0.5)--(1.5,-0.5);
    \end{tikzpicture}
    };
    \node (11) at (-1.75,10.5){
    \begin{tikzpicture}[scale=0.3]
        \node (a1) at (-1,2) {\vdot};
        \node (a2) at (1,2) {\vdot};
        \node (b) at (0,1) {\vdot};
        \node (c) at (0,0) {\vdot};
        \draw (c.center)--(b.center)--(a1.center);
        \draw (c.center)--(a1.center);
        \draw (c.center)--(a2.center);
        \draw (-1.5,2.5)--(-1.7,2.5)--(-1.7,-0.5)--(-1.5,-0.5);
        \draw (1.5,2.5)--(1.7,2.5)--(1.7,-0.5)--(1.5,-0.5);
    \end{tikzpicture}
    };
    \node (12) at (1.75,10.5){
    \begin{tikzpicture}[scale=0.3]
        \node (a1) at (-1,2) {\vdot};
        \node (a2) at (1,2) {\vdot};
        \node (b) at (0,1) {\vdot};
        \node (c) at (0,0) {\vdot};
        \draw (c.center)--(b.center)--(a2.center);
        \draw (c.center)--(a2.center);
        \draw (c.center)--(a1.center);
        \draw (-1.5,2.5)--(-1.7,2.5)--(-1.7,-0.5)--(-1.5,-0.5);
        \draw (1.5,2.5)--(1.7,2.5)--(1.7,-0.5)--(1.5,-0.5);
    \end{tikzpicture}
    };
    \node (13) at (0,12){
    \begin{tikzpicture}[scale=0.3]
        \node (a1) at (-1,2) {\vdot};
        \node (a2) at (1,2) {\vdot};
        \node (b) at (0,1) {\vdot};
        \node (c) at (0,0) {\vdot};
        \draw (c.center)--(b.center)--(a2.center);
        \draw (c.center)--(a2.center);
        \draw (c.center)--(a1.center)--(b.center);
        \draw (-1.5,2.5)--(-1.7,2.5)--(-1.7,-0.5)--(-1.5,-0.5);
        \draw (1.5,2.5)--(1.7,2.5)--(1.7,-0.5)--(1.5,-0.5);
    \end{tikzpicture}
    };
    \draw (4) to [bend left=22] (13);
    \filldraw[white] (-0.88,6.77) circle (1.7pt);
    \filldraw[white] (-0.79,9.68) circle (1.7pt);
    \filldraw[white] (-0.98,8.16) circle (1.7pt);
    \draw (1)--(2)--(4)--(3)--(1);
    \draw (5)--(6)--(9)--(7)--(5);
    \draw (6)--(8)--(11)--(9)--(12)--(13)--(11);
    \draw (7)--(10)--(12);
    \filldraw[white] (1.02,3.21) circle (1.9pt); 
    \draw (1) to [bend right=35] (5);
    \draw (2)--(8);
    \draw (3)--(10);
\end{tikzpicture}}&
\scalebox{0.77}{
\begin{tikzpicture}
    \node (1) at (0,1){
    \begin{tikzpicture}[scale=0.3]
        \node (a1) at (-1,-2) {\vdot};
        \node (a2) at (1,-2) {\vdot};
        \node (b) at (0,-1) {\vdot};
        \node (c) at (0,0) {\vdot};
        \draw (-1.5,-2.5)--(-1.7,-2.5)--(-1.7,0.5)--(-1.5,0.5);
        \draw (1.5,-2.5)--(1.7,-2.5)--(1.7,0.5)--(1.5,0.5);
        \draw (a1.center)--(b.center)--(c.center)--(a1.center);
        \draw (c.center)--(a2.center)--(b.center);
    \end{tikzpicture}
    };
    \node (2) at (-2,2.5){
    \begin{tikzpicture}[scale=0.3]
        \node[blue] (a1) at (-1,-2) {\vdot};
        \node[blue] (a2) at (1,-2) {\vdot};
        \node[blue] (b) at (0,-1) {\vdot};
        \node[blue] (c) at (0,0) {\vdot};
        \draw[blue] (-1.5,-2.5)--(-1.7,-2.5)--(-1.7,0.5)--(-1.5,0.5);
        \draw[blue] (1.5,-2.5)--(1.7,-2.5)--(1.7,0.5)--(1.5,0.5);
        \draw[blue] (c.center)--(a2.center)--(b.center)--(c.center);
    \end{tikzpicture}
    };
    \node (3) at (2,2.5){
    \begin{tikzpicture}[scale=0.3]
        \node[blue] (a1) at (-1,-2) {\vdot};
        \node[blue] (a2) at (1,-2) {\vdot};
        \node[blue] (b) at (0,-1) {\vdot};
        \node[blue] (c) at (0,0) {\vdot};
        \draw[blue] (-1.5,-2.5)--(-1.7,-2.5)--(-1.7,0.5)--(-1.5,0.5);
        \draw[blue] (1.5,-2.5)--(1.7,-2.5)--(1.7,0.5)--(1.5,0.5);
        \draw[blue] (c.center)--(a1.center)--(b.center)--(c.center);
    \end{tikzpicture}
    };
    \node (4) at (0,4){
    \begin{tikzpicture}[scale=0.3]
        \node (a1) at (-1,-2) {\vdot};
        \node (a2) at (1,-2) {\vdot};
        \node (b) at (0,-1) {\vdot};
        \node (c) at (0,0) {\vdot};
        \draw (-1.5,-2.5)--(-1.7,-2.5)--(-1.7,0.5)--(-1.5,0.5);
        \draw (1.5,-2.5)--(1.7,-2.5)--(1.7,0.5)--(1.5,0.5);
        \draw (b.center)--(c.center)--(a1.center);
        \draw (c.center)--(a2.center);
    \end{tikzpicture}
    };
    \node (5) at (0,6){
    \begin{tikzpicture}[scale=0.3]
        \node (a1) at (-1,-2) {\vdot};
        \node (a2) at (1,-2) {\vdot};
        \node (b) at (0,-1) {\vdot};
        \node (c) at (0,0) {\vdot};
        \draw (-1.5,-2.5)--(-1.7,-2.5)--(-1.7,0.5)--(-1.5,0.5);
        \draw (1.5,-2.5)--(1.7,-2.5)--(1.7,0.5)--(1.5,0.5);
        \draw (a1.center)--(b.center)--(a2.center)--(c.center)--(a1.center);
    \end{tikzpicture}
    };
    \node (6) at (-1.75,7.5){
    \begin{tikzpicture}[scale=0.3]
        \node (a1) at (-1,-2) {\vdot};
        \node (a2) at (1,-2) {\vdot};
        \node (b) at (0,-1) {\vdot};
        \node (c) at (0,0) {\vdot};
        \draw (-1.5,-2.5)--(-1.7,-2.5)--(-1.7,0.5)--(-1.5,0.5);
        \draw (1.5,-2.5)--(1.7,-2.5)--(1.7,0.5)--(1.5,0.5);
        \draw (a1.center)--(b.center)--(a2.center)--(c.center);
    \end{tikzpicture}
    };
    \node (7) at (1.75,7.5){
    \begin{tikzpicture}[scale=0.3]
        \node (a1) at (-1,-2) {\vdot};
        \node (a2) at (1,-2) {\vdot};
        \node (b) at (0,-1) {\vdot};
        \node (c) at (0,0) {\vdot};
        \draw (-1.5,-2.5)--(-1.7,-2.5)--(-1.7,0.5)--(-1.5,0.5);
        \draw (1.5,-2.5)--(1.7,-2.5)--(1.7,0.5)--(1.5,0.5);
        \draw (a2.center)--(b.center)--(a1.center)--(c.center);
    \end{tikzpicture}
    };
    \node (8) at (-3.5,9){
    \begin{tikzpicture}[scale=0.3]
        \node (a1) at (-1,-2) {\vdot};
        \node (a2) at (1,-2) {\vdot};
        \node (b) at (0,-1) {\vdot};
        \node (c) at (0,0) {\vdot};
        \draw (-1.5,-2.5)--(-1.7,-2.5)--(-1.7,0.5)--(-1.5,0.5);
        \draw (1.5,-2.5)--(1.7,-2.5)--(1.7,0.5)--(1.5,0.5);
        \draw (b.center)--(a2.center)--(c.center);
    \end{tikzpicture}
    };
    \node (9) at (0,9){
    \begin{tikzpicture}[scale=0.3]
        \node (a1) at (-1,-2) {\vdot};
        \node (a2) at (1,-2) {\vdot};
        \node (b) at (0,-1) {\vdot};
        \node (c) at (0,0) {\vdot};
        \draw (-1.5,-2.5)--(-1.7,-2.5)--(-1.7,0.5)--(-1.5,0.5);
        \draw (1.5,-2.5)--(1.7,-2.5)--(1.7,0.5)--(1.5,0.5);
        \draw (a1.center)--(b.center)--(a2.center);
    \end{tikzpicture}
    };
    \node (10) at (3.5,9){
    \begin{tikzpicture}[scale=0.3]
        \node (a1) at (-1,-2) {\vdot};
        \node (a2) at (1,-2) {\vdot};
        \node (b) at (0,-1) {\vdot};
        \node (c) at (0,0) {\vdot};
        \draw (-1.5,-2.5)--(-1.7,-2.5)--(-1.7,0.5)--(-1.5,0.5);
        \draw (1.5,-2.5)--(1.7,-2.5)--(1.7,0.5)--(1.5,0.5);
        \draw (b.center)--(a1.center)--(c.center);
    \end{tikzpicture}
    };
    \node (11) at (-1.75,10.5){
    \begin{tikzpicture}[scale=0.3]
        \node (a1) at (-1,-2) {\vdot};
        \node (a2) at (1,-2) {\vdot};
        \node (b) at (0,-1) {\vdot};
        \node (c) at (0,0) {\vdot};
        \draw (-1.5,-2.5)--(-1.7,-2.5)--(-1.7,0.5)--(-1.5,0.5);
        \draw (1.5,-2.5)--(1.7,-2.5)--(1.7,0.5)--(1.5,0.5);
        \draw (b.center)--(a2.center);
    \end{tikzpicture}
    };
    \node (12) at (1.75,10.5){
    \begin{tikzpicture}[scale=0.3]
        \node (a1) at (-1,-2) {\vdot};
        \node (a2) at (1,-2) {\vdot};
        \node (b) at (0,-1) {\vdot};
        \node (c) at (0,0) {\vdot};
        \draw (-1.5,-2.5)--(-1.7,-2.5)--(-1.7,0.5)--(-1.5,0.5);
        \draw (1.5,-2.5)--(1.7,-2.5)--(1.7,0.5)--(1.5,0.5);
        \draw (b.center)--(a1.center);
    \end{tikzpicture}
    };
    \node (13) at (0,12){
    \begin{tikzpicture}[scale=0.3]
        \node (a1) at (-1,-2) {\vdot};
        \node (a2) at (1,-2) {\vdot};
        \node (b) at (0,-1) {\vdot};
        \node (c) at (0,0) {\vdot};
        \draw (-1.5,-2.5)--(-1.7,-2.5)--(-1.7,0.5)--(-1.5,0.5);
        \draw (1.5,-2.5)--(1.7,-2.5)--(1.7,0.5)--(1.5,0.5);
    \end{tikzpicture}
    };
    \draw (4) to [bend left=22] (13);
    \filldraw[white] (-0.88,6.77) circle (1.7pt);
    \filldraw[white] (-0.79,9.68) circle (1.7pt);
    \filldraw[white] (-0.98,8.16) circle (1.7pt);
    \draw (1)--(2)--(4)--(3)--(1);
    \draw (5)--(6)--(9)--(7)--(5);
    \draw (6)--(8)--(11)--(9)--(12)--(13)--(11);
    \draw (7)--(10)--(12);
    \filldraw[white] (1.02,3.21) circle (1.9pt);  
    \draw (1) to [bend right=35] (5);
    \draw (2)--(8);
    \draw (3)--(10);
\end{tikzpicture}}
\end{tabular}
\caption{The lattice $\Tr(Y)$ and its image under $\gamma_Y$.}
\label{fig:yop}
\end{figure}
\end{example}

\appendix
\section{A bijection with coalescent histories}\label{ap:coalhis}

The recurrence in Theorem \ref{thm:recur} was motivated by example generation. We first computed the number of transfer systems on full binary trees of depths $1$, $2$, and $3$ using elementary counting techniques, and then we entered these values (4, 169, 1020100) into the Online Encyclopedia of Integer Sequences (OEIS). We found a match with A306402 \cite{oeisbintree}. This yielded a sequence from computational biology for the number of ``valid coalescent histories on full binary trees'' as computed in \cite{rosenberg_counting_2007}. While there isn't a closed form for this sequence, Rosenberg produced a recurrence relation for a class of coalescent histories \cite{rosenberg_counting_2007}, one example of which gives rise to this sequence. This motivated us to find a bijection between coalescent histories and transfer systems on binary trees, as well as to find a similar recurrence for transfer systems on more general arborescences (see Section~\ref{sec:recur}). In this appendix we introduce coalescent histories on bifurcated trees and show there is a bijection with transfer systems on the subtree given by removing the leaves. \smallskip

Coalescent histories are structures defined on a pair of rooted trees, a \emph{gene tree} \(G\) and a \emph{species tree} \(S\). These structures model how a gene could have evolved across a group of species. The leaves represent the group studied, whereas internal nodes represent ancestral populations. Mathematically, we require the leaf set of \(G\) to be a subset of the leaf set of \(S\), and both trees should be \emph{bifurcated}, meaning each internal node (as well as the root) has exactly $2$ children. We can view the vertex set of the tree as a poset where the edges give us relations so that moving back in time in the gene/species tree corresponds to decreasing in the poset. Thus the gene and species trees are examples of arborescences (see Remark~\ref{rem:treetoarb}).

In \cite{rosenberg_counting_2007}, following the definition given in \cite{DegnanSalter2005}, a \emph{valid coalescent history} is a function $h$ from the set of internal nodes (not including the root) of the gene tree \(G\) to the set of non-leaf vertices (including the root) of the species tree \(S\) such that 
\begin{enumerate}
    \item $h$ is order-preserving; and
    \item for all leaves $\ell_1, \ell_2$ in $G$, we have that $h(\ell_1 \meet \ell_2) \leq \ell_1 \meet \ell_2$.
\end{enumerate} 
\begin{remark}
In \cite{DegnanSalter2005} and \cite{rosenberg_counting_2007} the authors define coalescent histories as functions whose codomain is the set of internal edges of the tree $S$, where they add an additional edge pointing to the root (this symbolizes ancestors that existed before the root of the species tree). Recall we view edges as directed away from the root, and so each edge corresponds uniquely with its target. Thus we can instead regard the codomain of these functions to be the set of non-leaf vertices of $S$. We also note the authors do not use the language of posets, but instead refer to how ``ancient'' or ``recent'' lineages are (which coincides with relations in the poset) and when two lineages ``coalesce'' (which coincides with meets). We have translated their definitions to match our poset language given here.
\end{remark}

In this paper we are only interested in the case where the gene and species tree are equal. Then we can reinterpret the definition as follows:
\begin{defn}
\label{def:coalescent}
Let \(T\) be an arborescence whose Hasse diagram is a bifurcated tree. Write \(\widehat{T}\) for the subposet of \(T\) consisting of the internal nodes along with the root. A (valid) \textbf{coalescent history} with matching gene and species tree given by \(T\) is a function \(h\colon \widehat{T} \to \widehat{T}\) such that
    \begin{enumerate}
        \item \(h\) is order-preserving, i.e. for all \(x,y\in\widehat{T}\), if \(x\leq y\) then \(h(x)\leq h(y)\); and
        \item for all \(x\in \widehat{T}\), \(h(x) \leq x\).
    \end{enumerate}
\end{defn}
Note we have now included the root in the domain, but we will always have $h(\bot)=\bot$ by condition (2). So this has no effect other than simplifying the definition/notation. We leave it as an exercise for the reader to verify that condition (2) given above is equivalent to the condition $h(\ell_1 \meet \ell_2) \leq \ell_1 \meet \ell_2$ for all leaves $\ell_1, \ell_2\in G$ in the case that $G=S$. The key observation is that each internal node is the meet of some unique list of leaves in our bifurcated tree.

Recall from Section~\ref{sec:bijmono} that for an arborescence $\A$ we defined
\[
M_\A=\{f\colon \A \to \N \mid f \leq \rho \text{ and } f \text{ is order-preserving}\}.
\]
We now state and prove our main theorem from this section:
\begin{thm}\label{thm:bijcoalhis}
Let $T$ be an arborescence whose Hasse diagram is a bifurcated tree. Write $C_T$ for the set of valid coalescent histories whose gene and species trees are given by $T$. Then we have a bijection from $C_T$ onto $M_{\widehat{T}}$ and so $\# \Tr(\widehat{T})=\# C_T$.
\end{thm}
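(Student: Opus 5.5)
The plan is to compose a coalescent history $h\colon \widehat{T}\to\widehat{T}$ with the rank function. Define $\Theta\colon C_T\to M_{\widehat{T}}$ by $\Theta(h)=\rho\circ h$, where $\rho$ is the rank function of $\widehat{T}$. Since $\widehat{T}$ is an ideal of $T$ containing the root, the rank functions of $\widehat{T}$ and $T$ agree on $\widehat{T}$, and $\widehat{T}$ is itself an arborescence.

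First I check that $\Theta(h)\in M_{\widehat{T}}$. The composite $\rho\circ h$ is order-preserving because both $h$ and $\rho$ are. Condition (2) of Definition~\ref{def:coalescent} gives $h(x)\le x$, so $\rho(h(x))\le\rho(x)$. Hence $\Theta(h)\le\rho$.

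The key structural fact is that for each $x\in\widehat{T}$, the principal ideal $\langle x\rangle$ is a chain, and $\rho$ restricts to a bijection $\langle x\rangle\to[\rho(x)]$. Every element of $\langle x\rangle$ is a non-leaf vertex, since it lies below the non-leaf $x$, so $\langle x\rangle\subseteq\widehat{T}$. For a given $x$, the element $h(x)$ lies in $\langle x\rangle$, so it is determined by its rank. This gives injectivity: if $\rho\circ h=\rho\circ h'$, then $h(x)$ and $h'(x)$ are elements of the chain $\langle x\rangle$ with equal rank, hence equal.

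For surjectivity, given $f\in M_{\widehat{T}}$, define $h(x)$ to be the unique ancestor of $x$ (possibly $x$ itself) of rank $f(x)$. This exists because $0\le f(x)\le\rho(x)$, and then $h(x)\le x$ holds automatically. The main point to check is that this $h$ is order-preserving. Take $x\le y$. Then $h(x)\le x\le y$ and $h(y)\le y$, so both lie in the chain $\langle y\rangle$. Within that chain, order coincides with order of ranks, and $\rho(h(x))=f(x)\le f(y)=\rho(h(y))$ because $f$ is order-preserving. Therefore $h(x)\le h(y)$. By construction $\Theta(h)=f$.

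Finally, Theorem~\ref{thm:arbij} applied to the arborescence $\widehat{T}$ gives $\#\Tr(\widehat{T})=\#M_{\widehat{T}}=\#C_T$. The only delicate step is the order-preservation argument, which relies on the tree property (down-sets are chains). The bifurcation hypothesis is only needed for the equivalence of definitions noted before the theorem.
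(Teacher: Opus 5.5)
Your proposal is correct and follows the paper's proof: the bijection is $h\mapsto\rho\circ h$, its inverse sends $f$ to the map choosing the unique ancestor of $x$ of rank $f(x)$, and Theorem~\ref{thm:arbij} then gives the count. You also supply verifications the paper leaves to the reader, namely that $\widehat{T}$ is an ideal so the rank functions agree, and that the inverse is order-preserving because $h(x)$ and $h(y)$ both lie in the chain $\langle y\rangle$.
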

\begin{proof}
We define a map $\Phi\colon C_T\to M_{\widehat{T}}$. Let $h\in C_T$ and define $\Phi(h)=\rho \circ h\colon \widehat{T}\to \N$ where $\rho$ is usual rank function that measures distance from the root. It is straightforward to verify $\Phi(h)\in M_{\widehat{T}}$. For the inverse define $\Psi\colon M_{\widehat{T}}\to C_T$ as follows. Let $f\in M_{\widehat{T}}$ and $x\in \widehat{T}$. We have $f(x)\leq \rho(x)$, so there exists a unique ancestor $a$ of $x$ such that $\rho(a)=f(x)$. Let $\Psi(f)(x)=a$. One can verify $\Phi\circ \Psi = \id$ and $\Psi \circ \Phi = \id$. Finally by Theorem~\ref{thm:arbij} we also have a bijection $M_{\widehat{T}}\to \Tr(\widehat{T})$ so we conclude $\#\Tr(\widehat{T})=\#C_T$.  
\end{proof}

\bibliographystyle{amsalpha}
\bibliography{trees}

@article {selfdual,
    AUTHOR = {Franchere, Evan E. and Ormsby, Kyle and Osorno, Ang\'{e}lica
              M. and Qin, Weihang and Waugh, Riley},
     TITLE = {Self-duality of the lattice of transfer systems via weak
              factorization systems},
   JOURNAL = {Homology Homotopy Appl.},
  FJOURNAL = {Homology, Homotopy and Applications},
    VOLUME = {24},
      YEAR = {2022},
    NUMBER = {2},
     PAGES = {115--134},
      ISSN = {1532-0073,1532-0081},
   MRCLASS = {55P91 (18A32)},
MRREVIEWER = {Steven\ R.\ Costenoble},
       DOI = {10.4310/hha.2022.v24.n2.a6},
       URL = {https://doi.org/10.4310/hha.2022.v24.n2.a6},
}

@article{ninfty,
      title={ninfty: A software package for homotopical combinatorics}, 
      author={Scott Balchin},
      year={2025},
      JOURNAL = {arXiv preprint  	arXiv:2504.01003},
}

@article {BBR1,
    AUTHOR = {Balchin, Scott and Barnes, David and Roitzheim, Constanze},
     TITLE = {{$N_\infty$}-operads and associahedra},
   JOURNAL = {Pacific J. Math.},
  FJOURNAL = {Pacific Journal of Mathematics},
    VOLUME = {315},
      YEAR = {2021},
    NUMBER = {2},
     PAGES = {285--304},
      ISSN = {0030-8730,1945-5844},
   MRCLASS = {18M80 (06A07 52B20 55N91 55P91)},
       DOI = {10.2140/pjm.2021.315.285},
       URL = {https://doi.org/10.2140/pjm.2021.315.285},
}

@article {BMO,
    AUTHOR = {Balchin, Scott and MacBrough, Ethan and Ormsby, Kyle},
     TITLE = {The combinatorics of {$N_\infty$} operads for {$C_{qp^n}$} and
              {$D_{p^n}$}},
   JOURNAL = {Glasg. Math. J.},
  FJOURNAL = {Glasgow Mathematical Journal},
    VOLUME = {67},
      YEAR = {2025},
    NUMBER = {1},
     PAGES = {50--66},
      ISSN = {0017-0895,1469-509X},
   MRCLASS = {55P48 (18M80)},
MRREVIEWER = {Steffen\ Sagave},
       DOI = {10.1017/S0017089524000211},
       URL = {https://doi.org/10.1017/S0017089524000211},
}

@article {BMO2,
    AUTHOR = {Balchin, Scott and MacBrough, Ethan and Ormsby, Kyle},
     TITLE = {Lifting {$N_{\infty}$} operads from conjugacy data},
   JOURNAL = {Tunis. J. Math.},
  FJOURNAL = {Tunisian Journal of Mathematics},
    VOLUME = {5},
      YEAR = {2023},
    NUMBER = {3},
     PAGES = {479--504},
      ISSN = {2576-7658,2576-7666},
   MRCLASS = {55P91 (06B05 20D30 20J99)},
MRREVIEWER = {Andr\'{e}\ G.\ Henriques},
       DOI = {10.2140/tunis.2023.5.479},
       URL = {https://doi.org/10.2140/tunis.2023.5.479},
}

@article {modelstr,
    AUTHOR = {Balchin, Scott and Ormsby, Kyle and Osorno, Ang\'{e}lica M.
              and Roitzheim, Constanze},
     TITLE = {Model structures on finite total orders},
   JOURNAL = {Math. Z.},
  FJOURNAL = {Mathematische Zeitschrift},
    VOLUME = {304},
      YEAR = {2023},
    NUMBER = {3},
     PAGES = {Paper No. 40, 35},
      ISSN = {0025-5874,1432-1823},
   MRCLASS = {55U35 (18N40 18N70)},
MRREVIEWER = {Philippe\ Gaucher},
       DOI = {10.1007/s00209-023-03287-6},
       URL = {https://doi.org/10.1007/s00209-023-03287-6},
}

@article {steiner,
    AUTHOR = {Rubin, Jonathan},
     TITLE = {Detecting {S}teiner and linear isometries operads},
   JOURNAL = {Glasg. Math. J.},
  FJOURNAL = {Glasgow Mathematical Journal},
    VOLUME = {63},
      YEAR = {2021},
    NUMBER = {2},
     PAGES = {307--342},
      ISSN = {0017-0895,1469-509X},
   MRCLASS = {55P91 (55P48)},
MRREVIEWER = {Ben\ C.\ Walter},
       DOI = {10.1017/S001708952000021X},
       URL = {https://doi.org/10.1017/S001708952000021X},
}

@book{stanley1,
  author    = {Stanley, Richard P.},
  title     = {Enumerative Combinatorics, Volume 1},
  series    = {Cambridge Studies in Advanced Mathematics},
  volume    = {49},
  edition   = {2nd},
  year      = {1997},
  publisher = {Cambridge University Press},
  address   = {Cambridge},
  isbn      = {978-0521553094}
}

@article {rubinoperads,
    AUTHOR = {Rubin, Jonathan},
     TITLE = {Combinatorial {$N_\infty$} operads},
   JOURNAL = {Algebr. Geom. Topol.},
  FJOURNAL = {Algebraic \& Geometric Topology},
    VOLUME = {21},
      YEAR = {2021},
    NUMBER = {7},
     PAGES = {3513--3568},
      ISSN = {1472-2747,1472-2739},
   MRCLASS = {55P48 (55P91)},
MRREVIEWER = {Jonathan\ Beardsley},
       DOI = {10.2140/agt.2021.21.3513},
       URL = {https://doi.org/10.2140/agt.2021.21.3513},
}

@article{modelstr2,
    title={Characterizing model structures on finite posets},
    author={Mazur, Kristen and Osorno, Ang\'{e}lica M. and Roitzheim, Constanze and Santhanam, Rekha and Van Niel, Danika and Zapata Castro, Valentina},
    year={2025},
    JOURNAL = {arXiv preprint  	arXiv:2511.06151}
}

@article{bousfield1,
    title={Left and right Bousfield localization on lattices},
    author={Carnero Bravo, Andr\'{e}s and Goyal, Shuchita and Mart\'{\i}nez Alberga, Sof\'{\i}a and Ng, Cherry and Roitzheim, Constanze and Tolosa, Daniel},
    year={2025},
    JOURNAL = {arXiv preprint  	 	 	arXiv:2511.07952}
}

@article{bousfield2,
    title={Bousfield Localizations on the Nonmodular Lattice {$N_5$}},
    author={Mart\'{\i}nez Alberga, Sof\'{\i}a and Roitzheim, Constanze},
    year={2026},
    JOURNAL = {arXiv preprint  	 	arXiv:2605.12744}
}

@article{nonabelian,
    title={Characterizing Transfer Systems for Non-Abelian Groups},
    author={Klanderman, Sarah and Lewis, Chloe and Monson, Harlea and Shibata, Koki and Van Niel, Danika},
    year={2025},
    JOURNAL = {arXiv preprint arXiv:2511.13439}
}

@article {ranktwo,
    AUTHOR = {Bao, Linus and Hazel, Christy and Karkos, Tia and Kessler,
              Alice and Nicolas, Austin and Ormsby, Kyle and Park, Jeremie
              and Schleff, Cait and Tilton, Scotty},
     TITLE = {Transfer systems for rank two elementary abelian groups:
              characteristic functions and matchstick games},
   JOURNAL = {Tunis. J. Math.},
  FJOURNAL = {Tunisian Journal of Mathematics},
    VOLUME = {7},
      YEAR = {2025},
    NUMBER = {1},
     PAGES = {167--191},
      ISSN = {2576-7658,2576-7666},
   MRCLASS = {06B05 (20K10 55P91)},
       DOI = {10.2140/tunis.2025.7.167},
       URL = {https://doi.org/10.2140/tunis.2025.7.167},
}

@article {BH1,
    AUTHOR = {Blumberg, Andrew J. and Hill, Michael A.},
     TITLE = {Operadic multiplications in equivariant spectra, norms, and
              transfers},
   JOURNAL = {Adv. Math.},
  FJOURNAL = {Advances in Mathematics},
    VOLUME = {285},
      YEAR = {2015},
     PAGES = {658--708},
      ISSN = {0001-8708,1090-2082},
   MRCLASS = {55P91 (18D50 55P43 55P48)},
MRREVIEWER = {Markus\ Szymik},
       DOI = {10.1016/j.aim.2015.07.013},
       URL = {https://doi.org/10.1016/j.aim.2015.07.013},
}

@article {HMOO,
    AUTHOR = {Hafeez, Usman and Marcus, Peter and Ormsby, Kyle and Osorno,
              Ang\'{e}lica M.},
     TITLE = {Saturated and linear isometric transfer systems for cyclic
              groups of order {$p^mq^n$}},
   JOURNAL = {Topology Appl.},
  FJOURNAL = {Topology and its Applications},
    VOLUME = {317},
      YEAR = {2022},
     PAGES = {Paper No. 108162, 20},
      ISSN = {0166-8641,1879-3207},
   MRCLASS = {55P91 (55P43 55P48)},
MRREVIEWER = {David\ Barnes},
       DOI = {10.1016/j.topol.2022.108162},
       URL = {https://doi.org/10.1016/j.topol.2022.108162},
}

@article{rosenberg_counting_2007,
  author    = {Noah A. Rosenberg},
  title     = {Counting Coalescent Histories},
  journal   = {Journal of Computational Biology},
  volume    = {14},
  number    = {3},
  pages     = {360--377},
  year      = {2007},
  doi       = {10.1089/cmb.2006.0109},
  pmid      = {17563317},
  url       = {https://doi.org}
}

@misc{oeisbintree,
  author = {{OEIS Foundation Inc.}},
  title = {Entry {A306402} in the {On-Line Encyclopedia of Integer Sequences}},
  howpublished = {Published electronically at \url{https://oeis.org/A306402}},
  year = {2026}
}

@article{DegnanSalter2005,
  author  = {Degnan, James H. and Salter, Laura A.},
  title   = {Gene tree distributions under the coalescent process},
  journal = {Evolution},
  volume  = {59},
  number  = {1},
  pages   = {24--37},
  year    = {2005},
  doi     = {10.1111/j.0014-3820.2005.tb00891.x}
}

@book {algtopbook,
    AUTHOR = {May, J. P. and Ponto, K.},
     TITLE = {More concise algebraic topology},
    SERIES = {Chicago Lectures in Mathematics},
      NOTE = {Localization, completion, and model categories},
 PUBLISHER = {University of Chicago Press, Chicago, IL},
      YEAR = {2012},
     PAGES = {xxviii+514},
      ISBN = {978-0-226-51178-8; 0-226-51178-2},
   MRCLASS = {55-02 (16T05 18G55 55P60)},
  MRNUMBER = {2884233},
MRREVIEWER = {Ismar\ Voli\'{c}},
}

\end{document}